\theoremstyle{plain}
\newtheorem{theorem}{Theorem}[section]
\newtheorem{proposition}[theorem]{Proposition}
\newtheorem{lemma}[theorem]{Lemma}
\theoremstyle{definition}
\newtheorem{definition}[theorem]{Definition}
\theoremstyle{remark}
\renewcommand*\backref[1]{\ifx#1\relax \else Cited on #1 \fi}
\newcommand{\red}[1]{{\color{red} (#1)}}
\DeclareMathOperator*{\argmin}{argmin} 
\DeclareMathOperator*{\argmax}{argmax} 
\newcommand{\R}{\mathbb{R}}
\newcommand*\bigcdot{\mathpalette\bigcdot@{.5}}
\newcommand*\bigcdot@[2]{\mathbin{\vcenter{\hbox{\scalebox{#2}{$\m@th#1\bullet$}}}}}
\newcommand{\norm}[1]{|| #1 ||}
\newcommand{\innerprod}[2]{\langle #1, #2 \rangle}
\newcommand{\supp}{\text{supp}}
\newcommand{\Null}{\text{Null}}
\newcommand{\parens}[1]{\left(#1\right)}
\newcommand{\curly}[1]{\left\{#1\right\}}
\def\bbE{\mathbb{E}}
\def\calB{\mathcal{B}}
\def\calD{\mathcal{D}}
\def\calS{\mathcal{S}}
\newcommand{\rbr}[1]{\left(#1\right)}
\newcommand{\sbr}[1]{\left[#1\right]}
\newcommand{\cbr}[1]{\left\{#1\right\}}
\newcommand{\abr}[1]{\left\langle#1\right\rangle}
\def\norm#1{\|#1\|}
\newcommand{\biggmid}{\bigg \vert }
\def\argmax{\mathop{\rm arg\,max}}
\def\argmin{\mathop{\rm arg\,min}}
\def\half{\frac 1 2}
\newcommand{\xbar}{\xbar{w}}
\newcommand{\E}{\bbE}
\title{Analyzing and Improving Greedy 2-Coordinate Updates for Equality-Constrained Optimization via Steepest Descent in the 1-Norm}
\author{
  Amrutha Varshini Ramesh \\
  University of British Columbia \\
  \And
  Aaron Mishkin \\
  Stanford University \\
  \AND
  Mark Schmidt \\
  University of British Columbia,\\ Canada CIFAR AI Chair (Amii)\\
  \And
  Yihan Zhou \\
  University of Texas at Austin\\
  \And
  Jonathan Wilder Lavington \\
  University of British Columbia \\
  \And
  Jennifer She \\
  DeepMind\\
}
\begin{document}
    \maketitle

    \begin{abstract}
We consider minimizing a smooth function subject to a summation constraint over its variables. 
By exploiting a connection between the greedy 2-coordinate update for this problem and equality-constrained steepest descent in the 1-norm, we give a convergence rate for greedy selection under a proximal Polyak-\L{}ojasiewicz assumption that is faster than random selection and independent of the problem dimension $n$. We then consider minimizing with both a summation constraint and bound constraints, as arises in the support vector machine dual problem. Existing greedy rules for this setting either guarantee trivial progress only 
 or require $O(n^2)$ time to compute. We show that bound- and summation-constrained steepest descent in the L1-norm guarantees more progress per iteration than previous rules and can be computed in only $O(n \log n)$ time.
\end{abstract}

\section{Introduction}
Coordinate descent (CD) is an iterative optimization algorithm that performs a gradient descent step on a single variable at each iteration.
CD methods are appealing because they have a convergence rate similar to gradient descent, but for some common objective functions the iterations have a much lower cost.
Thus, there is substantial interest in using CD for training machine learning models.

\noindent\textbf{Unconstrained coordinate descent}: \citet{nesterov2012efficiency} considered CD with random choices of the coordinate to update, and proved non-asymptotic linear convergence rates for strongly-convex functions with Lipschitz-continuous gradients. It was later shown that these linear convergence rates are achieved under a generalization of strong convexity called the Polyak-\L{}ojasiewicz condition~\citep{karimi2016linear}. Moreover, greedy selection of the coordinate to update also leads to faster rates than random selection~\citep{nutini2015coordinate}. These faster rates do not depend directly on the dimensionality of the problem due to an equivalence between the greedy coordinate update and steepest descent on all coordinates in the 1-norm. For a discussion of many other problems where we can implement greedy selection rules at similar cost to random rules, see~\citet[][Sections~2.4-2.5]{nutini2022let}.

\noindent\textbf{Bound-constrained coordinate descent}: CD is commonly used for optimization with lower and/or upper bounds on each variable. \citet{nesterov2012efficiency} showed that the unconstrained rates of randomized CD can be achieved under these separable constraints using a projected-gradient update of the coordinate. \citet{richtarik2014iteration} generalize this result to include a non-smooth but separable term in the objective function via a proximal-gradient update; this justifies using CD in various constrained and non-smooth settings, including least squares regularized by the 1-norm and support vector machines with regularized bias. Similar to the unconstrained case,~\citet{karimireddy2019efficient} show that several forms of greedy coordinate selection lead to faster convergence rates than random selection for problems with bound constraints or separable non-smooth terms.

\noindent\textbf{Equality-constrained coordinate descent}: many problems in machine learning  require us to satisfy an equality constraint. The most common example is that discrete probabilities must sum to one. Another common example is SVMs with an unregularized bias term. The (non-separable) equality constraint cannot be maintained by single-coordinate updates, but it can be maintained if we update two variables at each iteration.~\citet{necoara2011random} analyze random selection of the two coordinates to update, while~\citet{fang2018faster} discuss randomized selection with tighter rates. The LIBSVM package~\citep{chang2011libsvm} uses a greedy 2-coordinate update for fitting SVMs which has the same cost as random selection.
But despite LIBSVM being perhaps the most widely-used CD method of all time, current analyses of greedy 2-coordinate updates either result in sublinear convergence rates or do not lead to faster rates than random selection~\citep{tseng2009block,beck20142}.

\textbf{Our contributions}: we first give a new analysis for the greedy 2-coordinate update for optimizing a smooth function with an equality constraint. The analysis is based on an equivalence between the greedy update and equality-constrained steepest descent in the 1-norm. This leads to a simple dimension-independent analysis of greedy selection showing that it can converge substantially faster than random selection. Next, we consider greedy rules when we have an equality constraint and bound constraints. We argue that the rules used by LIBSVM cannot guarantee non-trivial progress on each step. We analyze a classic greedy rule based on maximizing progress, but this analysis is dimension-dependent and the cost of implementing this rule is $O(n^2)$ if we have both lower and upper bounds. Finally, we show that steepest descent in the 1-norm with equalities and bounds guarantees a fast dimension-independent rate and can be implemented in $O(n \log n)$. This rule may require updating more than 2 variables, in which case the additional variables can only be moved to their bounds, but this can only happen for a finite number of early iterations.

%
%
\section{Equality-Constrained Greedy 2-Coordinate Updates}
We first consider the problem of minimizing a twice-differentiable function $f$ subject to a simple linear equality constraint,
\begin{equation}
\min_{x\in\R^n} f(x), \hspace{0.3 cm}\text{subject to } \sum\limits_{i=1}^n x_i =\gamma,
 \label{eq:BCD}   
\end{equation}
where $n$ is the number of variables and $\gamma$ is a constant. On iteration $k$ the 2-coordinate optimization method chooses a coordinate $i_k$ and a coordinate $j_k$ and updates these two coordinates using 
\[
x_{i_k}^{k+1} = x_{i_k}^k + \delta^k, \quad x_{j_k}^{k+1} = x_{j_k}^k - \delta^k,
\]
for a scalar $\delta^k$ (the other coordinates are unchanged). We can write this update for all coordinates as 
\begin{equation}
\label{eq:dk}
x^{k+1} = x^k + d^k, \quad \text{where $d_{i_k}^k=\delta^k$, $d_{j_k}^k=-\delta^k$, and $d_m^k=0$ for $m \not\in\{i_k,j_j\}$.}
\end{equation}
If the iterate $x^k$ satisfies the constraint then this update maintains the constraint. In the coordinate gradient descent variant of this update we choose $\delta^k = -\frac{\alpha^k}{2}(\nabla_{i_k} f(x^k) - \nabla_{j_k}f(x^k))$ for a step size $\alpha_k$. This results in an update to $i_k$ and $j_k$ of the form
\begin{equation}\label{eq:update2}
\begin{aligned}
x_{i_k}^{k+1} & = x_{i_k}^k -\frac{\alpha^k}{2}(\nabla_{i_k} f(x^k) - \nabla_{j_k}f(x^k)), \, \,
x_{j_k}^{k+1} = x_{j_k}^k -\frac{\alpha^k}{2}(\nabla_{j_k} f(x^k) - \nabla_{i_k}f(x^k)).
\end{aligned}
\end{equation}
If $f$ is continuous, this update is guaranteed to decrease $f$ for sufficiently small $\alpha^k$.
The greedy rule chooses the coordinates to update by maximizing the difference in their partial derivatives,
\begin{equation}
\label{eq:ik}
i_k \in \argmax_i\nabla_i f(x^k), \quad j_k \in \argmin_j\nabla_j f(x^k).
\end{equation}
At the solution of the problem we must have partial derivatives being equal, so intuitively this greedy choice updates the coordinates that are furthest above/below the average partial derivative. 
This choice also minimizes a set of 2-coordinate quadratic approximations to the function (see Appendix~\ref{proof:b_and_d})
\[
\argmin_{i,j}\left\{\min_{d_{ij}|d_i+d_j=0}f(x^k) + \nabla_{ij} f(x^k)^Td_{ij} + \frac{1}{2\alpha^k}\norm{d_{ij}}^2\right\},
\]
which is a special case of the Gauss-Southwell-q (GS-q) rule of~\citet{tseng2009block}.

We assume that the gradient of $f$ is Lipschitz continuous, and our analysis will depend on a quantity we call $L_2$. The quantity $L_2$ bounds the change in the $2$-norm of the gradient with respect to any two coordinates $i$ and $j$ under a two-coordinate update of any $x$ of the form~\eqref{eq:dk}.
\begin{equation}
\label{eq:L2-smooth}
\norm{\nabla_{ij} f(x + d) - \nabla_{ij} f(x)}_2 \leq L_2\norm{d}_2.
\end{equation}
Note that $L_2$ is less than or equal to the Lipschitz constant of the gradient of $f$.

\subsection{Connections between Greedy 2-Coordinate Updates and the 1-Norm}

Our analysis relies on several connections between the greedy update and steepest descent in the 1-norm, which we outline in this section. First, we note that vectors $d^k$ of the form~\eqref{eq:dk} satisfy $\norm{d^k}_1^2 = 2\norm{d^k}_2^2$, since
\begin{align*}
  \norm{d^k}_1^2 &= (|\delta^k| + |-\delta^k|)^2 \\
&= (\delta^k)^2 + (\delta^k)^2 + 2|\delta^k|\cdot|\delta^k| \\
&= 4 (\delta^k)^2 \\
&= 2((\delta^k)^2 + (-\delta^k)^2) \\
&= 2\norm{d^k}_2^2.  
\end{align*}
Second, if a twice-differentiable function's gradient satisfies the 2-coordinate Lipschitz continuity assumption~\eqref{eq:L2-smooth} with constant $L_2$, then the full gradient is Lipschitz continuous in the $1$-norm with constant $L_1=L_2/2$ (see Appendix~\ref{sec:L12}). Finally, we note that applying the 2-coordinate update~\eqref{eq:update2} is an instance of applying steepest descent over all coordinates in the $1$-norm. 
In particular, in Appendix~\ref{proof:sd} we show that steepest descent in the $1$-norm always admits a greedy 2-coordinate update as a solution.
\begin{lemma}\label{lemma:sd}
    Let $\alpha > 0$. Then at least one steepest descent direction with respect to the $1$-norm has exactly two non-zero coordinates.
    That is,
    \[
        \min_{d \in \mathbb{R}^n | d^T 1 = 0} {\nabla f(x)^T d+ \frac{1}{2\alpha} ||d||_1^2}
        = 
        \min_{i,j} \cbr{\min_{d_{ij} \in \mathbb{R}^2 | d_{i} + d_j = 0} {\nabla_{ij} f(x)^T d_{ij}+ \frac{1}{2\alpha} ||d_{ij}||_1^2}}.
        \label{eq:steepest1norm}
    \]
\end{lemma}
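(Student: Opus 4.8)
The plan is to prove the two inequalities ``$\le$'' and ``$\ge$'' separately, where only the second requires real work. Throughout write $g = \nabla f(x)$. The direction $\text{LHS} \le \text{RHS}$ is immediate: any $d$ supported on a pair $\{i,j\}$ with $d_i + d_j = 0$ automatically satisfies the full constraint $\sum_m d_m = 0$, so the right-hand side minimizes the very same objective over a subset of the feasible region of the left-hand side, and hence the left-hand side can only be smaller.

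The substance is the reverse inequality, which I would establish by a sparsification argument: given an arbitrary feasible $d$, I construct a feasible $2$-sparse $d'$ whose objective value is no larger. Let $t = \norm{d}_1$ and pick $i^* \in \argmax_i g_i$ and $j^* \in \argmin_j g_j$. Because $\sum_m d_m = 0$, the positive entries of $d$ sum to $t/2$ and the negative entries sum to $-t/2$. Define $d'$ by $d'_{j^*} = t/2$, $d'_{i^*} = -t/2$, and $d'_m = 0$ otherwise. Then $d'$ is feasible for the $(i^*,j^*)$ pairwise problem and $\norm{d'}_1 = t = \norm{d}_1$, so the quadratic penalty $\tfrac{1}{2\alpha}\norm{d}_1^2$ is unchanged under the replacement.

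It then remains to show $g^T d \ge g^T d'$, which follows from a coordinatewise bound on the linear term: on the positive part $g_i \ge g_{j^*}$ and on the negative part $g_i \le g_{i^*}$ (and in the latter case $d_i < 0$ flips the inequality), so
\[
g^T d = \sum_{d_i > 0} g_i d_i + \sum_{d_i < 0} g_i d_i \;\ge\; g_{j^*}\tfrac{t}{2} - g_{i^*}\tfrac{t}{2} \;=\; g^T d'.
\]
Combining this with the equality of the two penalty terms gives $g^T d + \tfrac{1}{2\alpha}\norm{d}_1^2 \ge g^T d' + \tfrac{1}{2\alpha}\norm{d'}_1^2$. Since $d$ was arbitrary, the infimum on the left is attained at a $2$-sparse vector feasible for the $(i^*,j^*)$ pairwise problem, which yields $\text{LHS} \ge \text{RHS}$ and closes the loop.

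The only thing to watch, rather than a genuine obstacle, is the degenerate case $\max_i g_i = \min_j g_j$ (a constant gradient): there every feasible $d$ has $g^T d = 0$ and the minimizer is $d = 0$, which lies in the pairwise feasible set, so the claim holds trivially. Otherwise the main care is the sign bookkeeping in the coordinatewise bound, and observing that the resulting direction matches the greedy rule~\eqref{eq:ik}, placing positive mass on the minimal-gradient coordinate $j^*$ and negative mass on the maximal-gradient coordinate $i^*$.
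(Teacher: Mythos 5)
Your proof is correct, but it takes a genuinely different route from the paper's. The paper proves Lemma~\ref{lemma:sd} by writing the Lagrangian of the steepest-descent problem, using the subdifferential $\partial_i\norm{d}_1^2 = 2\norm{d}_1\,\mathrm{sgn}(d_i)$ to derive optimality conditions, proposing the two-sparse candidate supported on $i\in\argmax_i\nabla_i f(x)$, $j\in\argmin_j\nabla_j f(x)$, solving for the step $\delta = -\tfrac{\alpha}{4}(\nabla_i f(x)-\nabla_j f(x))$, and then verifying the subgradient inclusion for every untouched coordinate $k$ via a convex-combination argument ($\nabla_k f(x)$ lies between the extreme partial derivatives). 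You instead give a direct sparsification argument: any feasible $d$ has positive part summing to $t/2$ and negative part to $-t/2$ where $t=\norm{d}_1$, and collapsing all that mass onto the extreme pair $(i^*,j^*)$ preserves the 1-norm exactly while only decreasing the linear term, by the coordinatewise bounds $g_i \ge g_{j^*}$ on the positive support and $g_i \le g_{i^*}$ on the negative support. Both arguments are sound; yours is more elementary (no subdifferential calculus, no need to exhibit the optimizer, and the degenerate constant-gradient case is handled explicitly), and it is structurally closer to the extreme-point argument the paper deploys later in Appendix~\ref{sec:L12}. What the paper's KKT route buys in exchange is the closed-form optimal step $\delta=-\tfrac{\alpha}{4}(\nabla_i f(x)-\nabla_j f(x))$, which is reused directly in Algorithm~\ref{alg:euclid}; your argument pins down the optimal support but would require one further line (minimizing the resulting scalar quadratic in $t$) to recover that formula. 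One cosmetic overstatement: you conclude that the infimum ``is attained'' at a 2-sparse vector, whereas your exchange argument as written only shows the two infima coincide; attainment follows from coercivity of the objective, which is worth a half-sentence if you want the existential claim in the lemma's prose (``at least one steepest descent direction has exactly two non-zero coordinates'') rather than just the displayed equality of minima.
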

This lemma allows us to equate
the progress of greedy 2-coordinate updates to the progress made by a full-coordinate steepest descent step descent step in the 1-norm.

\subsection{Proximal-PL Inequality in the 1-Norm}

For lower bounding sub-optimality in terms of the 1-norm, we introduce the proximal-PL inequality in the 1-norm.
The proximal-PL condition was introduced to allow simpler proofs for various constrained and non-smooth optimization problems~\citep{karimi2016linear}. The proximal-PL condition is normally defined based on the $2$-norm, but we define a variant for the summation-constrained problem where distances are measured in the $1$-norm.
\begin{definition}
A function $f$, that is $L_1$-Lipschitz with respect to the $1$-norm and has a summmation constraint on its parameters, satisfies the proximal-PL condition in the $1$-norm if for a positive constants  $\mu_1$ we have
\begin{equation}
    \frac{1}{2} \mathcal{D}(x,L_1) \geq \mu_1(f(x)-f^*),
    \label{eq:proxL1}
\end{equation}
for all $x$ satisfying the equality constraint. Here, $f^*$ is the constrained optimal function value and
\[
\mathcal{D}(x,L) = -2L \min_{\{y \; | y^T1=\gamma\}} \left[\innerprod{\nabla f(x)}{y-x} +
\frac{L}{2}||y-x||_1^2\right].  \label{eq:Dpl} 
\]
\end{definition}
It follows from the equivalence between norms that summation-constrained functions satisfying the proximal-PL condition in the 2-norm will also satisfy the above proximal-PL condition in the 1-norm. 
In particular, if $\mu_2$ is the proximal-PL constant in the $2$-norm, then we have $\frac{\mu_2}{n} \leq \mu_1 \leq \mu_2$ (see Appendix~\ref{proxpl1norm}).
Functions satisfying these conditions include any strongly-convex function $f$ as well as relaxations of strong convexity, such as functions of the form $f=g(Ax)$ for a strongly-convex $g$ and a matrix $A$~\citep{karimi2016linear}. In the $g(Ax)$ case $f$ is not strongly-convex if $A$ is singular, and we note that the SVM dual problem can be written in the form $g(Ax)$.

\subsection{Convergence Rate of Greedy 2-Coordinate Updates under Proximal-PL}
\label{result}
We analyze the greedy 2-coordinate method under the proximal-PL condition based on the connections to steepest descent in the 1-norm.
\begin{theorem}
\textit{Let $f$ be a twice-differentiable function whose gradient is 2-coordinate-wise Lipschitz~\eqref{eq:L2-smooth} and restricted to the set where $x^T1=\gamma$. If this function satisfies the proximal-PL inequality in the 1-norm~\eqref{eq:proxL1} for some positive $\mu_1$}, then the iterations of the 2-coordinate update~\eqref{eq:update2} with $\alpha^k=1/L_2$ and the greedy rule~\eqref{eq:ik} satisfy:
\begin{equation}
    f(x^k)- f(x^*) \leq \left(1- \frac{2\mu_1}{L_2}\right)^k (f(x^0) - f^*).
    \label{eq:greedyEqnRate}
\end{equation}
\label{eq:mainThm}
\end{theorem}
\begin{proof}
Starting from the descent lemma restricted to the coordinates $i_k$ and $j_k$ we have
\begin{align*}
f(x^{k+1}) & \leq f(x^k) + \nabla_{{i_k}{j_k}} f(x^k)^Td_{{i_k}{j_k}} + \frac{L_2}{2}\norm{d_{{i_k}{j_k}}}^2\\
& = f(x^k) + \min_{i,j}\left\{\min_{\substack{d_{ij}\in\R^2|\\d_i+d_j=0}}\nabla_{ij} f(x^k)^Td_{ij} + \frac{L_2}{2}\norm{d_{ij}}^2\right\} & (\text{GS-q rule})\\
& = f(x^k) + \min_{i,j}\left\{\min_{\substack{d_{ij}\in\R^2 |\\d_i+d_j=0}}\nabla_{ij} f(x^k)^Td_{ij} + \frac{L_2}{4}\norm{d_{ij}}_1^2\right\} & (\text{$\norm{d}_1^2=2\norm{d}^2$})\\
& = f(x^k) + \min_{i,j}\left\{\min_{\substack{d_{ij}\in\R^2 |\\d_i+d_j=0}} \nabla_{ij} f(x^k)^Td_{ij} + \frac{L_1}{2}\norm{d_{ij}}_1^2\right\} & (\text{$L_1 = L_2/2$})\\
& = f(x^k) + \min_{d|d^T1=0}\left\{\nabla f(x^k)^Td + \frac{L_1}{2}\norm{d}_1^2\right\} & (\text{Lemma~\ref{lemma:sd}}).
\end{align*}
Now subtracting $f^*$ from both sides and using the definition of $\mathcal{D}$ from the proximal-PL assumption,
\begin{align*}
f(x^{k+1}) - f(x^*) & \leq f(x^k) - f(x^*)- \frac{1}{2L_1}\mathcal{D}(x^k,L_1)\\
 & = f(x^k) - f(x^*) - \frac{\mu_1}{L_1}(f(x^k) - f^*) \\
& = f(x^k) - f(x^*) - \frac{2\mu_1}{L_2}(f(x^k) - f^*) \\
 &= \left(1- \frac{2\mu_1}{L_2}\right)(f(x^k) - f^*)
\end{align*}
Applying the inequality recursively completes the proof.
\end{proof}
Note that the above rate also holds if we choose $\alpha^k$ to maximally decrease $f$, and the same rate holds up to a constant if we use a backtracking line search to set $\alpha^k$.

\subsection{Comparison to Randomized Selection}

If we sample the two coordinates $i_k$ and $j_k$ from a uniform distribution, then it is known that the 2-coordinate descent method satisfies~\citep{she2017linear}
\begin{equation}
    \E [f(x^k)]- f(x^*) \leq \left(1- \frac{\mu_2}{n^2L_2}\right)^k (f(x^0) - f^*).
    \label{eq:random}
\end{equation}
A similar result for a more-general problem class was shown by~\citet{necoara2014random}. This is substantially slower than the rate we show for the greedy 2-coordinate descent method. This rate is slower even in the extreme case where $\mu_1$ is similar to $\mu_2/n$, due to the presence of the $n^2$ term. There also exist analyses for cyclic selection in the equality-constrained case but existing rates for cyclic rules are slower than the random rates~\citet{wang2014iteration}.

In the case where $f$ is a dense quadratic function of $n$ variables, which includes SVMs under the most popular kernels, both random selection and greedy selection cost $O(n)$ per iteration to implement.
If we consider the time required to reach an accuracy of $\epsilon$ under random selection using the rate~\eqref{eq:random} we obtain $O(n^3\kappa\log(1/\epsilon))$ where $\kappa = L_2/\mu_2$. While for greedy selection under~\eqref{eq:greedyEqnRate} it is between $O(n^2\kappa\log(1/\epsilon))$ if $\mu_1$ is close to $\mu_2/n$ and $O(n\kappa\log(1/\epsilon))$ if $\mu_1$ is close to $\mu_2$. Thus, the reduction in total time complexity from using the greedy method is between a factor of $O(n)$ and $O(n^2)$. This is a large difference which has not been reflected in previous analyses.

There exist faster rates than~\eqref{eq:random} in the literature, but these require additional assumptions such as $f$ being separable or that we know the coordinate-wise Lipschitz constants~\citep{necoara2011random,necoara2014random,necoara2017random,fang2018faster}. However, these assumptions restrict the applicability of the results. Further, unlike convergence rates for random coordinate selection, we note that the new linear convergence rate~\eqref{eq:greedyEqnRate} for greedy 2-coordinate method avoids requiring a direct dependence on the problem dimension. 
The only previous dimension-independent convergence rate for the greedy 2-coordinate method that we are aware of is due to~\citet[][Theorem 5.2b]{beck20142}.
Their work considers functions that are bounded below, which is a weaker assumption than the proximal-PL assumption. However, this only leads to sublinear convergence rates and only on a measure of the violation in the  Karush-Kuhn-Tucker conditions.~\citet[][Theorem 6.2]{beck20142} also gives convergence rates in terms of function values for the special case of convex functions, but these rates are sublinear and dimension dependent.

\section{Equality- and Bound-Constrained Greedy Coordinate Updates}

Equality constraints often appear alongside lower and/or upper bounds on the values of the individual variables. This results in problems of the form
\begin{equation}
\min_{x\in\R^n} f(x), \hspace{0.3 cm}\text{subject to     } \sum\limits_{i=1}^n x_i =\gamma, \; l_i \leq x_i \leq u_i.
\label{eq:bounds}
\end{equation}
This framework includes our motivating problems of optimizing over the probability simplex ($l_i=0$ for all $i$ since probabilites are non-negative), and optimizing SVMs with an unregularized bias (where we have lower and upper bounds). With bound constraints we use a $d^k$ of form~\eqref{eq:dk} but where $\delta^k$ is defined so that the step respects the constraints,
\begin{equation}
\begin{aligned}
\delta^k = -\min\Big\{& \frac{\alpha^k}{2}(\nabla_{i_k}f(x^k) - \nabla_{j_k}f(x^k)),
 x_{i_k}^k - l_{i_k}, u_{j_k} - x_{j_k}^k\Big\},
\end{aligned}
\label{eq:deltaBound}
\end{equation}
Unfortunately, analyzing the bound-constrained case is more complicated. There are several possible generalizations of the greedy rule for choosing the coordinates $i_k$ and $j_k$ to update, depending on what properties of~\eqref{eq:ik} we want to preserve~\citep[see][Section~2.7]{nutini2018greed}. In this section we discuss several possibilities, and how the choice of greedy rule affects the convergence rate and iteration cost.

\subsection{GS-s Rule: Minimizing Directional Derivative}

Up until version 2.7, the greedy rule used in LIBSVM was the Gauss-Southwell-s (GS-s) rule. The GS-s rule chooses the coordinates resulting in the $d^k$ with the most-negative directional derivative. This is a natural generalization of the idea of steepest descent, and the first uses of the method that we aware of are by~\citet{keerthi2001improvements} for SVMs and by~\citet{shevade2003simple} for 1-norm regularized optimization. For problem~\eqref{eq:bounds} the GS-s rule chooses
\[
i_k \in \argmax_{i \;|\; x_i^k > l_i}\nabla_i f(x^k), \quad j_k \in \argmin_{j \; | \; x_j^k < u_i}\nabla_j f(x^k).
\]
This is similar to the unbounded greedy rule~\eqref{eq:ik} but excludes variables where the update would immediately violate a bound constraint. 

Unfortunately, the per-iteration decrease in $f$ obtained by the GS-s rule can be arbitrarily small. In particular, consider the case where the variable $i$ maximizing $\nabla_i f(x^k)$ has a value of $x_i^k=l_i+\epsilon$ for an arbitrarily small $\epsilon$. In this case, we would choose $i_k$ and take an arbitrarily small step of $\delta^k=\epsilon$. Steps like this that truncate $\delta^k$ are called ``bad'' steps, and the GS-s rule does not guarantee a non-trivial decrease in $f$ on bad steps. If we only have bound constraints and do not have an equality constraint (so we can update on variable at a time),~\citet{karimireddy2019efficient} show that at most half of the steps are bad steps. Their argument is that after we have taken a bad step on coordinate $i$, then the next time $i$ is chosen we will not take a bad step. However, with an equality constraint it is possible for a coordinate to be involved in consecutive bad steps. It is possible that a combinatorial argument similar to~\citet[][Theorem~8]{lacoste2015global} could bound the number of bad steps, but it is not obvious that we do not require an exponential total number of bad steps.

\subsection{GS-q Rule: Minimum 2-Coordinate Approximation}

A variant of the Gauss-Southwell-q (GS-q) rule of~\citet{tseng2009block}for problem~\eqref{eq:bounds} is
\begin{equation}
\begin{aligned}
\argmin_{i,j}\min_{d_{ij}|d_i+d_j=0} \left\{f(x^k) 
+ \nabla_{ij} f(x^k)^Td_{ij} + \frac{1}{2\alpha^k}\norm{d_{ij}}^2 : 
x^k + d \in [l, u]
\right\}.
\end{aligned}
\label{eq:GS-q}
\end{equation}
This minimizes a quadratic approximation to the function, restricted to the feasible set. For problem~\eqref{eq:bounds}, the GS-q rule is equivalent to choosing $i_k$ and $j_k$ to maximize~\eqref{eq:deltaBound}, the distance that we move. We show the following result for the GS-q rule in Appendix~\ref{app:GSq}.
\begin{theorem}
\textit{Let $f$ be a differentiable function whose gradient is 2-coordinate-wise Lipschitz~\eqref{eq:L2-smooth} and restricted to the set where $x^T1=\gamma$ and $l_i \leq x_i \leq u_i$. If this function satisfies the proximal-PL inequality in the 2-norm~\citep{karimi2016linear} for some positive $\mu_2$}, then the iterations of the 2-coordinate update~\eqref{eq:dk} with $\delta^k$ given by~\eqref{eq:deltaBound}, $\alpha^k=1/L_2$, and the greedy GS-q rule~\eqref{eq:GS-q} satisfy:
\[
    f(x^k)- f(x^*) \leq \left(1- \frac{\mu_2}{L_2(n-1)}\right)^k (f(x^0) - f^*).
\]
\end{theorem}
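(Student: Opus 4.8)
The plan is to pair a one-step descent guarantee with a decomposition argument that lower-bounds the progress of the best feasible two-coordinate move by a $1/(n-1)$ fraction of the progress of a full steepest-descent step in the $2$-norm, and then to invoke the proximal-PL inequality. First I would set up the per-iteration descent. Fixing $k$ and writing $g = \nabla f(x^k)$, for each pair $(i,j)$ I restrict the quadratic upper bound to the direction $d_i = t = -d_j$; since $\norm{d_{ij}}_2^2 = 2t^2$ this gives the scalar model $m_{ij}(t) = (g_i - g_j)t + L_2 t^2$. With $\alpha^k = 1/L_2$, the step $\delta^k$ in~\eqref{eq:deltaBound} is exactly the minimizer of $m_{i_k j_k}$ over its feasible interval, so two-coordinate $L_2$-smoothness~\eqref{eq:L2-smooth} yields $f(x^{k+1}) \le f(x^k) + \min_t m_{i_k j_k}(t)$. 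Writing $\Delta \defeq -\min_{i,j}\min_{t}m_{ij}(t) \ge 0$ for the progress of the GS-q choice~\eqref{eq:GS-q}, this reads $f(x^{k+1}) \le f(x^k) - \Delta$, and the problem reduces to lower-bounding $\Delta$.

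Second, and this is the crux, I would bound $\Delta$ against the full feasible step. Let $y^*$ minimize $\innerprod{g}{y-x^k} + \frac{L_2}{2}\norm{y-x^k}_2^2$ over $\{y : y^T 1 = \gamma,\ l \le y \le u\}$, put $d^* = y^* - x^k$ (so $\sum_i d^*_i = 0$), and split its support into sources $P = \{i : d^*_i > 0\}$ and sinks $N = \{j : d^*_j < 0\}$. Because the total positive and negative masses coincide, a basic feasible solution of the associated transportation problem writes $d^* = \sum_{(i,j)} d^{(ij)}$ as at most $|P| + |N| - 1 \le n-1$ elementary transfers, where the transfer $(i,j)$ carries flow $\theta_{ij} \ge 0$ and is itself a feasible two-coordinate move (since $0 \le \theta_{ij} \le d^*_i$ keeps $x_i$ within its bounds, and symmetrically for $j$). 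The linear parts sum exactly to $\innerprod{g}{d^*}$, while $\big(\sum_j \theta_{ij}\big)^2 \ge \sum_j \theta_{ij}^2$ shows the quadratic parts can only shrink, so $\sum_{(i,j)} m_{ij}(\theta_{ij}) \le \innerprod{g}{d^*} + \frac{L_2}{2}\norm{d^*}_2^2$. Since each transfer is feasible, $-\Delta \le \min_t m_{ij}(t) \le m_{ij}(\theta_{ij})$, and summing over the at most $n-1$ transfers gives $-(n-1)\Delta \le \innerprod{g}{d^*} + \frac{L_2}{2}\norm{d^*}_2^2$.

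Finally I would recognize that $-\min_y\big[\innerprod{g}{y-x^k} + \frac{L_2}{2}\norm{y-x^k}_2^2\big] = \frac{1}{2L_2}\mathcal{D}(x^k, L_2)$ for the $2$-norm version of $\mathcal{D}$, so the previous display reads $\Delta \ge \frac{1}{2L_2(n-1)}\mathcal{D}(x^k, L_2)$. Applying proximal-PL in the $2$-norm, $\frac{1}{2}\mathcal{D}(x^k,L_2) \ge \mu_2(f(x^k)-f^*)$, gives $\Delta \ge \frac{\mu_2}{L_2(n-1)}(f(x^k)-f^*)$, and substituting into the descent bound yields
\[
f(x^{k+1}) - f^* \le \left(1 - \frac{\mu_2}{L_2(n-1)}\right)\left(f(x^k) - f^*\right),
\]
which unrolls to the claim. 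The main obstacle is the decomposition step: I must check that each elementary transfer is feasible on its own, control the $2$-norm term under splitting, and---most importantly for the rate---pin the number of transfers to $n-1$ through a basic feasible solution of the transportation problem, which is exactly what makes this bound dimension-dependent rather than dimension-free.
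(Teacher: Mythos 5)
Your proposal is correct and follows essentially the same route as the paper: the paper's proof (Proposition~\ref{prop:gs-q-general-rate}, specialized to $A=1^\top$, $\tau=2$) also decomposes the full-coordinate minimizer $d^*$ into at most $n-1$ conformal two-coordinate pieces (Lemma~\ref{lemma:non_elementary_realizations}), uses superadditivity of the squared norm over conformal pieces to control the quadratic term, bounds the greedy block's progress by the average over those pieces, and closes with the proximal-PL inequality. Your transportation-problem basic-feasible-solution decomposition is exactly the conformal realization of $\Null(1^\top)$ into elementary (two-coordinate) vectors, and your direct feasibility check of each transfer plays the role of the paper's appeal to separability of the box indicator (Lemma~\ref{lemma:separable_inequality}).
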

The proof of this result is more complicated than our previous results, relying on the concept of conformal realizations used by~\citet{tseng2009block}. 
We prove the result for general block sizes and then specialize
to the two-coordinate case.
Unlike the GS-s rule, this result shows that the GS-q guarantees non-trivial progress on each iteration. Note that while this result does have a dependence on the dimension $n$, it does not depend on $n^2$ as the random rate~\eqref{eq:random} does.
Moreover, the dependence on $n$ can be improved by increasing the block size.

Unfortunately, the GS-q rule is not always efficient to use. As discussed by~\citet{beck20142}, there is no known algorithm faster than $O(n^2)$ for computing the GS-q rule~\eqref{eq:GS-q}. One special case where this can be solved in $O(n)$ given the gradient is if we only have lower bounds (or only have upper bounds)~\citep{beck20142}. An example with only lower bounds is our motivating problem of optimizing over the probability simplex, which only requires variables to be non-negative and sum to 1. On the other hand, our other motivating problem of SVMs requires lower and upper bounds so computing the GS-q rule would require $O(n^2)$. Beginning with version 2.8, LIBSVM began using an approximation to the GS-q rule that can be computed in $O(n)$. In particular, LIBSVM first chooses one coordinate using the GS-s rule, and then optimizes the other coordinate according to a variant of the GS-q rule~\citep{fan2005working}.\footnote{The newer LIBSVM rule also uses Lipschitz information about each coordinate; see Section~\ref{sec:Li} for discussion.} While other rules have been proposed, the LIBSVM rule remains among the best-performing methods in practice~\citep{horn2018comparative}. However, similar to the GS-s rule we cannot guarantee non-trivial progress for the practical variant of the GS-q rule used by LIBSVM.

\subsection{GS-1 Rule: Steepest Descent in the 1-Norm}

\begin{algorithm}
\caption{The GS-1 algorithm (with variables sorted in descending order according to $\nabla f(x)$).}\label{alg:euclid}
\begin{algorithmic}[1]
\Function{GS-1}{$x,\nabla f(x),\alpha,l,u$}
\State $x_{0} \gets 0; x_{n+1} \gets 0; i \gets 1; j \gets n; d \gets 0;$
\While 1
\State $\delta \gets \frac{\alpha}{4} \left(\nabla_i f(x) - \nabla_j f(x) \right)$
\State $\omega = \sum\limits_{p=0}^{i-1} x_p-l_p$; $\kappa  = \sum\limits_{q=j+1}^{n+1} u-x_q$
\If{$\delta- \omega  < 0\And 
\delta - \kappa < 0$}
    \If {$\omega < \kappa$} $d_i = \omega  - \kappa$ ; break;
    \Else \hspace{0.1cm}$d_j =\omega  - \kappa$; break;
    \EndIf
\ElsIf {$\delta- \omega  < 0$} $d_j = \omega  - \kappa$; break;
\ElsIf {$\delta- \kappa < 0$} $d_i = \omega  - \kappa$; break;
\EndIf
\If{$x_i +\omega -\delta \geq l_i \And x_j - \kappa+\delta \leq u_j$}
    \State $d_i = \omega -\delta ; d_j = \delta-\kappa$; break;
\EndIf
\If{$x_i +\omega -\delta < l_i \And x_j - \kappa+\delta > u_j$}
    \If{$l_i - (x_i +\omega -\delta) > x_j - \kappa+\delta- u_j$}
    \State $d_i = l- x_i$; $i\gets i+1$
    \Else
    \State $d_j = u- x_j$; $j\gets j-1$
    \EndIf
\ElsIf{$x_i+\omega  - \delta< l_i$} $d_i=l-x_i$; $i\gets i+1$
\Else \hspace{0.1cm}$d_j = u- x_j$; $j\gets j-1$
\EndIf
\EndWhile
\State \textbf{return} $d$
\EndFunction
\end{algorithmic}
\end{algorithm}

Rather than using the classic GS-s or GS-q selection rules, the Gauss-Southwell-1 (GS-1) rule performs steepest descent in the 1-norm. For problem~\eqref{eq:bounds} this gives the update
\begin{equation}
d^k \in \argmin_{l_i \leq x_i+d_i \leq u_i | d^T 1 = 0} \left\{\nabla f(x^k)^T d+ \frac{1}{2\alpha^k} ||d||_1^2\right\}.
\label{eq:GS-1}
\end{equation}
The GS-1 rule was proposed by~\citet{song2017accelerated} for (unconstrained) 1-norm regularized problems. To analyze this method, we modify the definition of $\mathcal{D}(x,L)$ in the proximal-PL assumption to be
\begin{align}
\mathcal{D}(x,L) = & -2L \min_{\{l_i \leq y_i \leq u_i \; | y^T1=\gamma\}} \Big\{\innerprod{\nabla f(x)}{y-x} +
\frac{L}{2}||y-x||_1^2\Big\}.  
\label{eq:Dbound}
\end{align}
We then have the following dimension-independent convergence rate for the GS-1 rule.
\begin{theorem}
\textit{Let $f$ be a differentiable function whose gradient is 2-coordinate-wise Lipschitz~\eqref{eq:L2-smooth} and restricted to the set where $x^T1=\gamma$ and $l_i \leq x_i \leq u_i$. If this function satisfies the proximal-PL inequality in the 1-norm~\eqref{eq:proxL1} for some positive $\mu_1$} with the definition~\eqref{eq:Dbound}, then the iterations of the update $x^{k+1} = x^k + d^k$ with the greedy rule~\eqref{eq:GS-1} and $\alpha_k=1/L_1 = 2/L_2$ satisfy:
\[
    f(x^k)- f(x^*) \leq \left(1- \frac{2\mu_1}{L_2}\right)^k (f(x^0) - f^*).
    \label{eq:greedyRate}
\]
\end{theorem}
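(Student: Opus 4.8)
The plan is to follow the standard ``descent lemma plus PL'' template, carried out entirely in the $1$-norm. First I would invoke the fact established earlier in the paper that $2$-coordinate Lipschitzness of $\nabla f$ with constant $L_2$ makes $\nabla f$ Lipschitz in the $1$-norm with constant $L_1 = L_2/2$. Because the feasible set $\{x : x^T 1 = \gamma,\ l_i \leq x_i \leq u_i\}$ is convex, the segment from $x^k$ to $x^{k+1}$ remains feasible, so integrating this Lipschitz bound along that segment (via H\"older's inequality) yields the $1$-norm descent lemma
\[
f(x^{k+1}) \leq f(x^k) + \nabla f(x^k)^T d^k + \frac{L_1}{2}\|d^k\|_1^2.
\]
The key point is that this inequality holds for \emph{any} feasible direction $d^k$, so it remains valid even when the GS-1 rule moves more than two coordinates.

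Next I would exploit the defining property of the GS-1 update. With $\alpha_k = 1/L_1$ the penalty coefficient $\frac{1}{2\alpha_k}$ equals $\frac{L_1}{2}$, so the direction $d^k$ from~\eqref{eq:GS-1} is \emph{exactly} the minimizer of the right-hand side of the descent lemma over the feasible directions $\{d : d^T 1 = 0,\ l_i \leq x^k_i + d_i \leq u_i\}$. Substituting $y = x^k + d$ in the definition~\eqref{eq:Dbound} of $\mathcal{D}$ turns the constraint $y^T 1 = \gamma$ into $d^T 1 = 0$ and the box constraint on $y$ into the box constraint on $x^k + d$, which shows this minimum value equals $-\frac{1}{2L_1}\mathcal{D}(x^k, L_1)$. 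Combining with the descent lemma gives $f(x^{k+1}) \leq f(x^k) - \frac{1}{2L_1}\mathcal{D}(x^k, L_1)$.

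Finally I would apply the proximal-PL inequality in the $1$-norm~\eqref{eq:proxL1}, namely $\frac{1}{2}\mathcal{D}(x^k, L_1) \geq \mu_1\,(f(x^k) - f^*)$, to obtain $f(x^{k+1}) - f^* \leq (1 - \mu_1/L_1)(f(x^k) - f^*)$. Substituting $L_1 = L_2/2$ turns the contraction factor into $1 - 2\mu_1/L_2$, and unrolling the recursion over $k$ iterations produces the claimed rate.

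The main obstacle I anticipate is bookkeeping rather than any deep argument. I need to make sure the $1$-norm smoothness bound, which the paper derives from a $2$-coordinate assumption, genuinely applies to a multi-coordinate GS-1 step and along the entire segment between consecutive feasible iterates, and that the change of variables matching~\eqref{eq:GS-1} to~\eqref{eq:Dbound} is exact, including verifying that $\alpha_k = 1/L_1$ is precisely what makes the two penalty coefficients coincide. A secondary point to confirm is that the descent lemma requires only differentiability together with the $1$-norm Lipschitz bound (not the twice-differentiability invoked when $L_1 = L_2/2$ was first stated), so that the stated hypotheses suffice. Once these are pinned down, the remainder is the familiar one-step contraction argument.
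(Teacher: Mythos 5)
Your proposal is correct and matches the paper's intended argument: the paper's one-line proof defers to the reasoning of its main equality-constrained theorem ``beginning after the application of Lemma~\ref{lemma:sd},'' which is precisely the descent-lemma-in-the-1-norm plus proximal-PL contraction you spell out, with the GS-1 direction exactly minimizing the quadratic upper bound when $\alpha_k = 1/L_1$. Your flagged bookkeeping points are also handled correctly in the paper: the $1$-norm smoothness from Appendix~\ref{sec:L12} is a full-coordinate bound over all feasible directions (so it covers multi-coordinate GS-1 steps), though you are right that its derivation there uses twice-differentiability while the theorem statement only assumes differentiability.
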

\begin{proof}
The proof follows the same reasoning as Theorem~\ref{eq:mainThm}, but beginning after the application of Lemma~\ref{lemma:sd} since we are directly computing the steepest descent direction.
\end{proof}
This GS-1 convergence rate is at least as fast as the convergence rate for GS-q, and thus by exploiting a connection to the 1-norm we once again obtain a faster dimension-independent rate. 
In Algorithm~\ref{alg:euclid} we give a method to construct a solution to the GS-1 rule~\eqref{eq:GS-1} in $O(n \log n)$ time (due to sorting the $\nabla_i f(x^k)$ values). Thus, our new GS-1 update guarantees non-trivial progress at each step (unlike the GS-s rule) and is efficient to compute (unlike the GS-q rule). The precise logic of Algorithm~\ref{alg:euclid} is somewhat complicated, but it can intuitively be viewed as a version of GS-s that fixes the bad steps where $\delta^k$ is truncated. Roughly, if the GS-s rule gives a bad step then the GS-1 moves the violating variable to its boundary and then may also update the variable with the next largest/smallest $\nabla_if(x^k)$.

The drawback of the GS-1 update is that it is not strictly a 2-coordinate method. While the GS-1 update moves at most 2 variables within the interior of the bound constraints, it may move additional variables to their boundary. The iteration cost of the method will be higher on iterations where more than 2 variables are updated. However, by using an argument similar to~\citet{sun2019we}, we can show that the GS-1 rule will only update more than 2 variables on a finite number of early iterations. This is because, after some finite number of iterations, the variables actively constrained by their bounds will remain at their bounds. At this point, each GS-1 update will only update 2 variables within  the interior of the bounds. In the case of SVMs, moving a variable to its lower bound corresponds to removing it as a potential support vector. Thus, this ``bug'' of GS-1 that it may update more than 2 variables can allow it to quickly remove many support vectors. In our experiments, we found that GS-1 identified the support vectors more quickly than other rules and that most GS-1 updates only updated 2 or 3 coordinates. 

\section{Greedy Updates using Coordinate-Wise Lipschitz Constants}
\label{sec:Li}

Up until this point, we have measured smoothness based on the maximum blockwise Lipschitz-constant $L_2$. An alternative measure of smoothness is Lipschitz continuity of individual coordinates. In particular, coordinate-wise Lipschitzness of coordinate $i$ requires that for all $x$ and $\alpha$ 
\[
|\nabla_i f(x + \alpha e_i) - \nabla_i f(x)| \leq L_i|\alpha|,
\]
where $e_i$ is a vector with a one in position $i$ and zeros in all other positions. For twice-differentiable convex functions, the Lipschitz constant with respect to the block $(i,j)$ is upper bounded by the sum of the coordinate-wise constants $L_i$ and $L_j$~\citep[Lemma~1]{nesterov2012efficiency}. For equality-constrained problems, this leads to a coordinate descent update of the form
\begin{equation}
\delta^k = -(\nabla_{i_k} f(x^k) - \nabla_{j_k} f(x^k))/(L_{i_k}+L_{j_k}).
\label{eq:deltaLi}
\end{equation}
\citet{necoara2011random} uses the coordinate-wise Lipschitz constants to design sampling distributions for $i_k$ and $j_k$ for equality-constrained optimization. Their analysis gives rates that can be faster than uniform sampling~\eqref{eq:random}. 

In Appendix~\ref{app:Li}, we show that the equality-constrained GS-q rule incorporating the $L_i$ values into a weighted 2-norm is given by
\begin{equation}
\argmax_{i,j}\left\{\frac{\nabla_i f(x^k) - \nabla_j f(x^k)}{\sqrt{L_i + L_j}}\right\}.
\label{eq:GSL}
\end{equation}
On the other hand, the GS-1 rule under a weighted 1-norm is given by
\begin{equation}
\argmax_{i,j}\left\{\frac{\nabla_i f(x^k) - \nabla_j f(x^k)}{\sqrt{L_i} + \sqrt{L_j}}\right\}.
\label{eq:GSL-1}
\end{equation}
Thus, the steepest descent equivalence does hold even without bound constraints. However, both~\eqref{eq:GSL} and~\eqref{eq:GSL-1} yield the standard greedy rule~\eqref{eq:ik} if all $L_i$ values are equal.

Unfortunately, it is not obvious how to solve~\eqref{eq:GSL} or~\eqref{eq:GSL-1} faster than $O(n^2)$. 
In our experiments we explored an approximation that can be computed in $O(n)$ given the gradient,
\begin{equation}
i_k \in \argmax_i (\nabla_i f(x^k)- \mu)/\sqrt{L_i}, \quad j_k \in \argmin_j(\nabla_j f(x^k)-\mu)/\sqrt{L_j},
\label{eq:ratio}
\end{equation}
where $\mu$ is the mean of $\nabla f(x^k)$. We call~\eqref{eq:ratio} the ratio approximation. This approximation incorporates the $\sqrt{L_i}$ values into the greedy rule, and by subtracting the mean we are guaranteed to select an $i_k$ where $\nabla_{i_k}f(x)$ is above the mean a $j_k$ where $\nabla_{j_k}f(x)$ is below the mean.
Further, we can implement efficient variations of the GS-s and GS-1 rules based on this approximation if we also have bound constraints.





%
%
\section{Experiments}

Our first experiment evaluates the performance of various rules on a synthetic equality-constrained least squares problem. Specifically, the objective is $f(x) = \frac 1 2 ||Ax-b||^2$ subject to $x^T 1 = 0$. We generate the elements of $A \in \R^{1000 \times 1000}$ from a standard normal and set $b=Ax + z$ where $x$ and $z$ are generated from standard normal distributions. We also consider a variant where each column of $A$ is scaled by a sample from a standard normal to induce very-different $L_i$ values. In Figure~\ref{fig:randomvsgreedy} we compare several selection rules: random $i_k$ and $j_k$, the greedy rule~\eqref{eq:ik}, sampling $i_k$ and $j_k$ proportional to $L_i$, the exact greedy $L_i$ rule~\eqref{eq:GSL}, the ratio greedy $L_i$ rule, and GS-1 $L_i$ rule ~\eqref{eq:GSL-1}. All algorithms use the update~\eqref{eq:deltaLi}. In these experiments we see that greedy rules lead to faster convergence than random rules in all cases. We see that knowing the $L_i$ values does not significantly change the performance of the random method, nor does it change the performance of the greedy methods in the case when the $L_i$ were similar. However, with different $L_i$ the greedy methods exploiting the $L_i$ values work better than the greedy method that does not use the $L_i$. This includes the ratio approximation~\eqref{eq:ratio} which does not have the high cost of the greedy GS-q and GS-1 rules that exploit the $L_i$ values. 

\begin{figure}[ht!]
    \centering
    \includegraphics[width=0.49\textwidth]{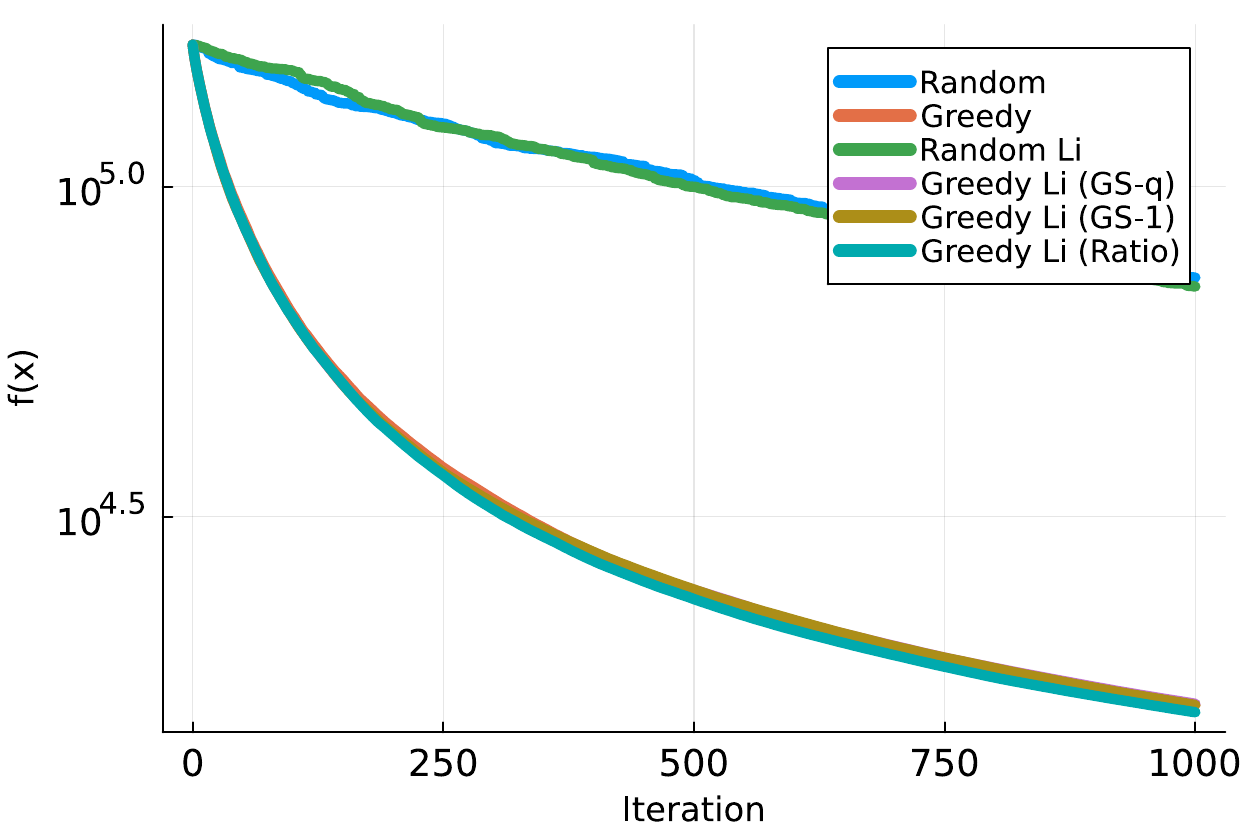}
    \includegraphics[width=0.49\textwidth]{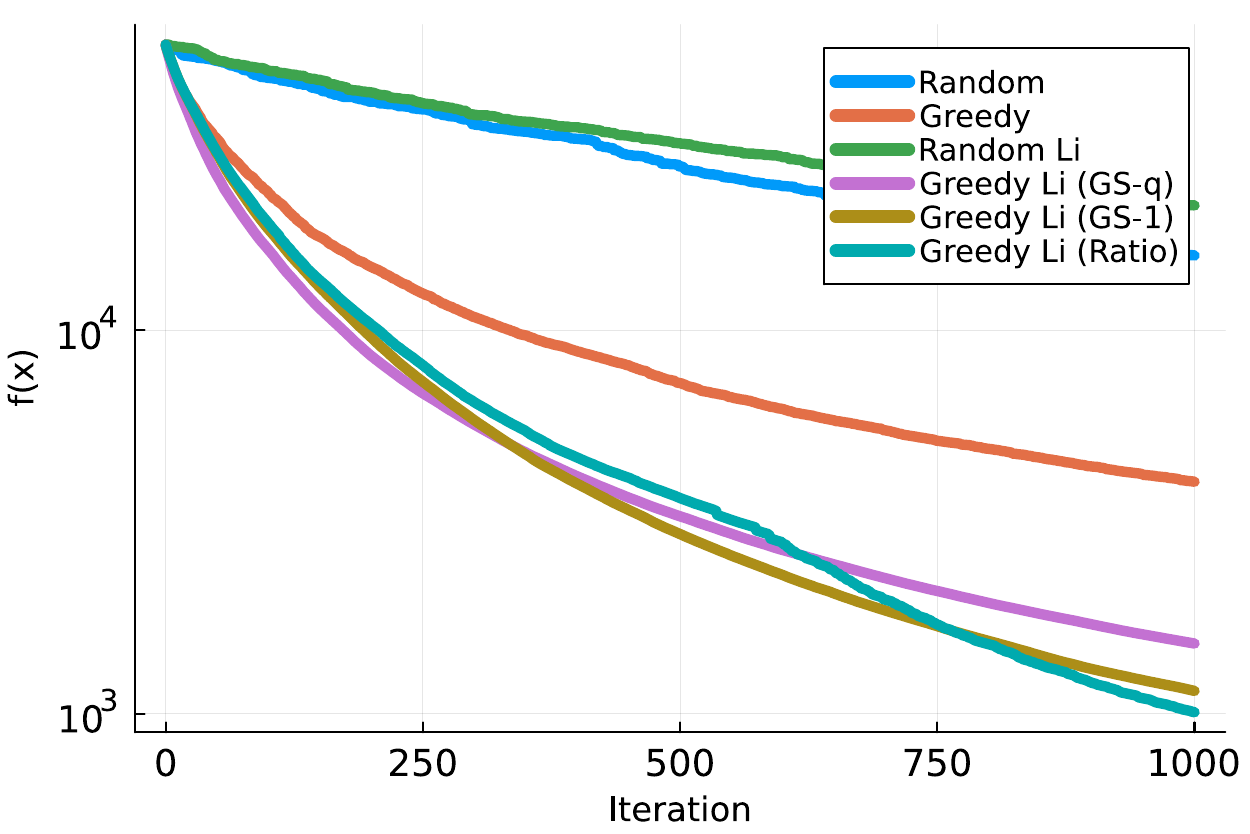}
    \caption{Random vs greedy coordinate selection rules, including rules using the coordinate-wise Lipschitz constants $L_i$. The $L_i$ are similar in the left plot, but differ significantly on the right.}
    \label{fig:randomvsgreedy}
\end{figure}

Our second experiment considers the same problem but with the additional constraints $x_i \in [-1,1]$. Figure~\ref{fig:boundexp} compares the GS-s, GS-q, and GS-1 rules in this setting. We see that the GS-s rule results in the slowest convergence rate, while the GS-q rule rule takes the longest to identify the active set. The GS-1 rule typically updates 2 variables, but occasionally updates 3 variables and rarely updates 4 or 5 variables.

\begin{figure}[H]
    \centering    
    \includegraphics[width=0.32\textwidth]{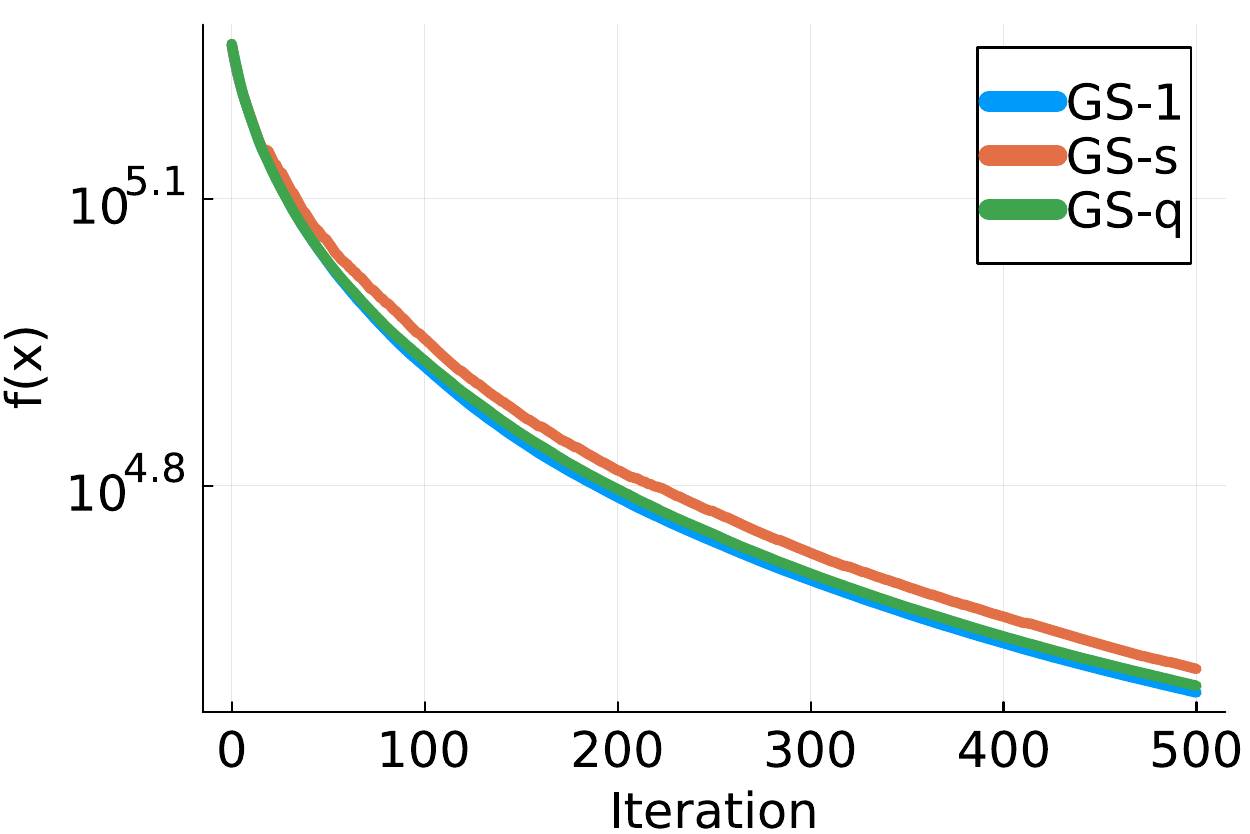}
    \includegraphics[width=0.32\textwidth]{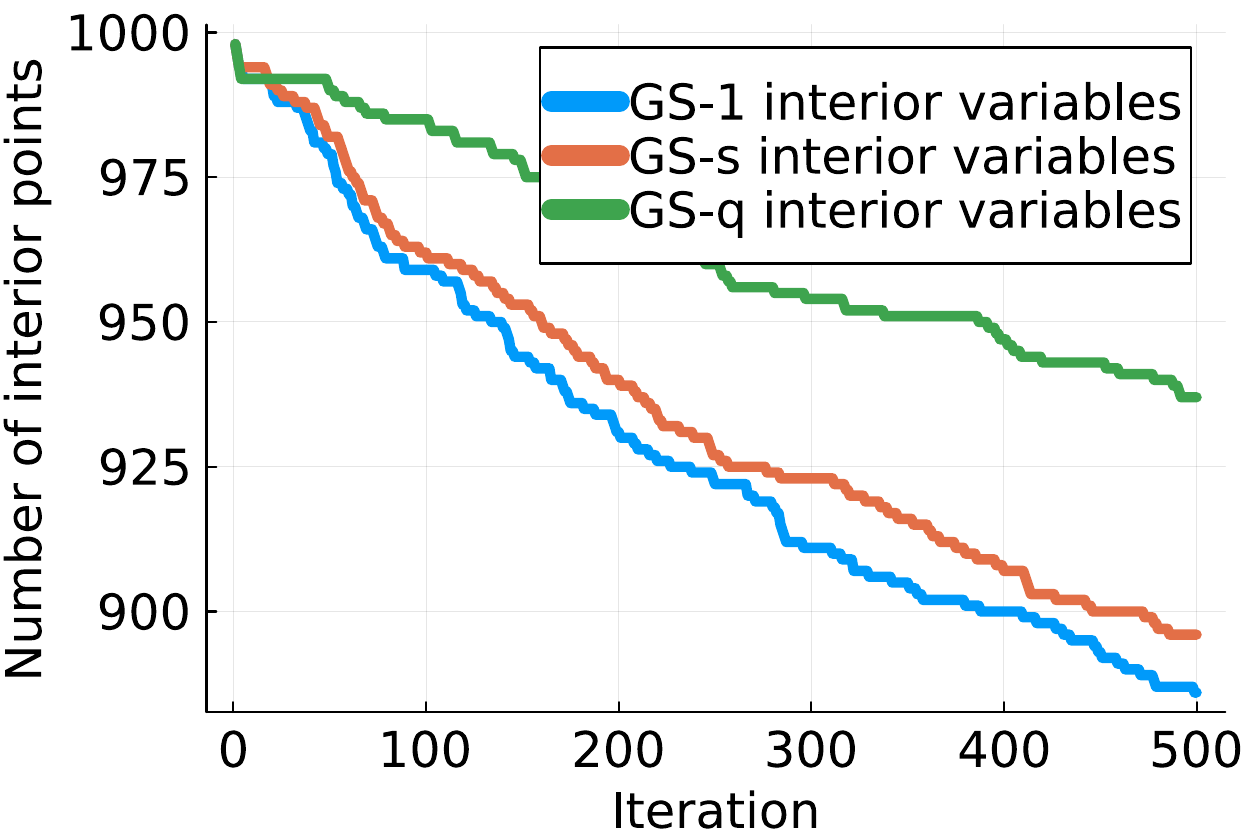}
    \includegraphics[width=0.32\textwidth]{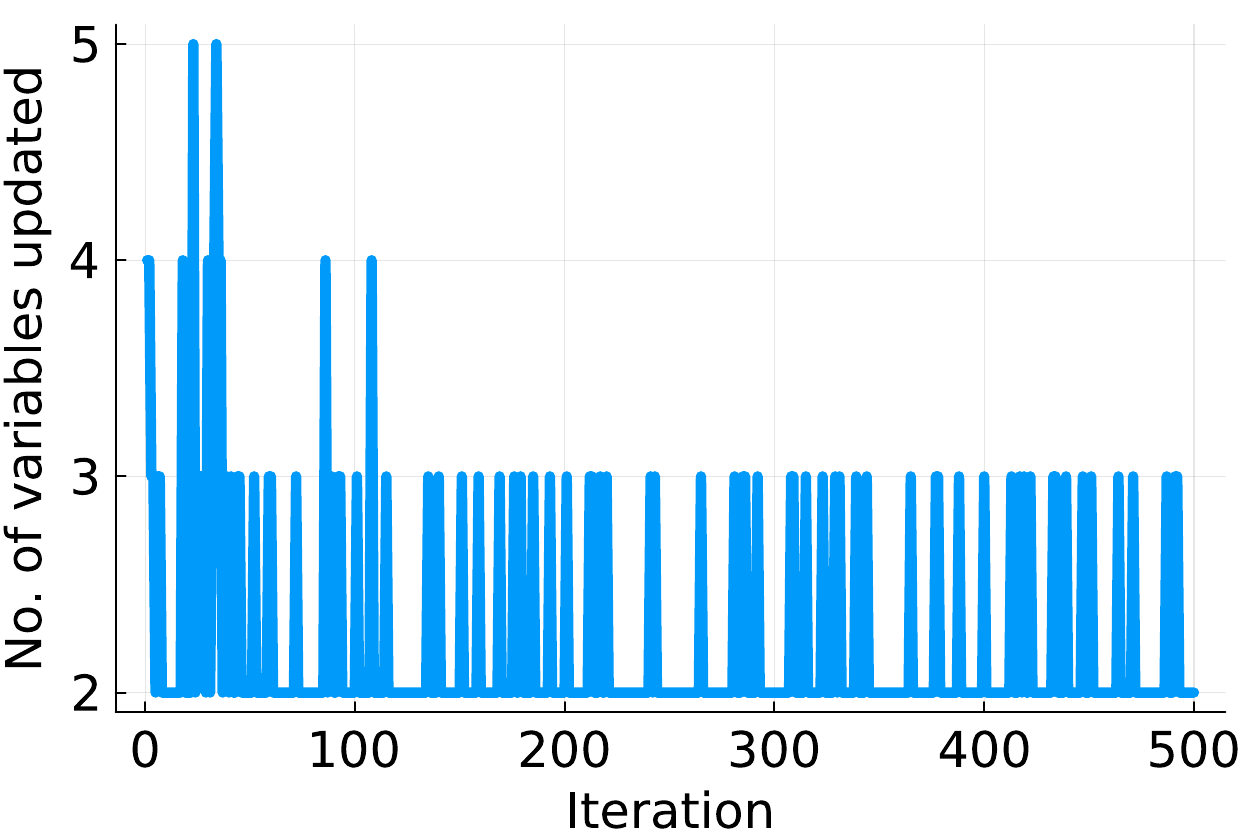}
    \caption{Comparison of GS-1, GS-q and GS-s under linear equality constraint and bound constraints. The left plot shows the function values, the middle plot shows the number of interior variables, and the right plot shows the number of variables updated by the GS-1 rule.}
    \label{fig:boundexp}
\end{figure}
\section{Discussion}
Despite the popularity of LIBSVM, up until this work we did not have a strong justification for using greedy 2-coordinate methods over simpler random 2-coordinate methods for equality-constrained optimization methods. This work shows that greedy methods may be faster by a factor ranging from $O(n)$ up to $O(n^2)$. This work is the first to identify the equivalence between the greedy 2-coordinate update and steepest descent in the 1-norm. The connection to the 1-norm is key to our simple analyses and also allows us to analyze greedy rules depending on coordinate-wise Lipschitz constants.

For problems with bound constraints and equality constraints, we analyzed the classic GS-q rule but also proposed the new GS-1 rule. Unlike the GS-s rule the GS-1 rule guarantees non-trivial progress on each iteration, and unlike the GS-q rule the GS-1 rule can be implemented in $O(n \log n)$. We further expect that the GS-1 rule could be implemented in $O(n)$ by using randomized algorithms, similar to the techniques used to implement $O(n)$-time projection onto the 1-norm ball~\citep{duchi2008efficient,vangroup}. The disadvantage of the GS-1 rule is that on some iterations it may update more than 2 coordinates on each step. However, when this happens the additional coordinates are simply moved to their bound. This can allow us to identify the active set of constraints more quickly. For SVMs this means identifying the support vectors faster, giving cheaper iterations.

\section*{Acknowledgements}
This research was partially supported by the
Canada CIFAR AI Chair Program, the Natural Sciences and Engineering Research Council of Canada
(NSERC) Discovery Grants RGPIN-2022-03669.

\bibliography{example_paper}
\bibliographystyle{plainnat}

\newpage
\appendix
\section{Equivalent Ways of Writing Equality-Constrained Greedy Rule}
\label{def}

We first show that the greedy rule with a summation constraint, of choosing the max/min partial derivatives, is an instance of the GS-q rule. We then show that this rule is also equivalent to steepest descent in the 1-norm.

\subsection{Greedy Rule Maximizes GS-q Progress Bound}\label{proof:b_and_d}

For the optimization problem \eqref{eq:BCD}, the GS-q rule selects the optimal block $b=\{i,j\}$, by solving the following minimization problem:
\begin{equation}
	\label{eq:gsq rule}
	b = \argmin_{b} \left\{\min_{d_b|d_i +d_j = 0} \langle \nabla_b f(x), d_b \rangle + \dfrac{1}{2\alpha}||d_b||^2\right\},
\end{equation}
where $d_b$ is the descent direction.
\paragraph{Solving for $d_b$.}
First let us fix $b$ and solve for $d_b$.
The Lagrangian of \eqref{eq:gsq rule} is,
\begin{equation*}
	\mathcal{L}(d_b, \lambda) =  \langle \nabla_b f(x), d_b \rangle + \dfrac{1}{2\alpha}||d_b||^2 + \lambda(d_1 + d_2).
\end{equation*}
Taking the gradient with respect to $d_b$ gives,
\begin{equation*}
	\nabla_{d_b} \mathcal{L}(d_b,\lambda) = \nabla_b f(x) + \frac{1}{\alpha} d_b + \lambda 1.
\end{equation*}
Setting the gradient equal to $0$ and solving for $d_b$ gives,
\begin{align}
	d_b = -\alpha (\nabla_b f(x) + \lambda 1). \label{eq:db}
\end{align}
From our constraint, $d_i+ d_j= 0$, we get
\begin{align*}
	0       & = -\alpha \left (\nabla_i f(x) + \lambda + \nabla_j f(x) + \lambda \right), \\
	\lambda & = -\frac 1 2 \langle\nabla_b f(x),1 \rangle.
\end{align*}
Substituting in \eqref{eq:db} we get,
\begin{equation}
	d_b = - \alpha \left ( \nabla_b f(x) - \frac 1 2 \langle \nabla_b f(x),1\rangle1 \right).
	\label{eq:optdb}
\end{equation}
This can be re-written as
\begin{align*}
	\begin{bmatrix}
		d_i \\
		d_j \\
	\end{bmatrix}
	=
	\frac{\alpha}{2} \left(\nabla_i f(x) - \nabla_j f(x)\right)
	\begin{bmatrix}
		-1 \\
		1  \\
	\end{bmatrix}.
\end{align*}

\paragraph{Solving for $b$.}
Now, we plug in the optimal $d_b$ from \eqref{eq:optdb} in   \eqref{eq:gsq rule} and solve for $b$ to give
\begin{equation}
\begin{aligned}
	 & \argmin_{b} - \alpha \left\langle \nabla_b f(x),  ( \nabla_b f(x) - \frac 1 2 \langle \nabla_b f(x),1\rangle1) \right\rangle + \frac {\alpha} {2}||( \nabla_b f(x) - \frac 1 2 \langle \nabla_b f(x),1\rangle1)||^2 \\
	 & \equiv \argmin_{b} - ||\nabla_b f(x)||^2 + \frac{1}{2}(\langle \nabla_b f(x),1\rangle)^2 + \frac 1 2 ||\nabla_b f(x)||^2 - \frac 1 2 (\langle \nabla_b f(x),1\rangle)^2 +                                         \\
	 & \hspace{1.5 cm} \frac 1 8 (\langle \nabla_b f(x),1\rangle)^2 \underbrace{\langle1,1\rangle}_2                                                                \\
	 & \equiv \argmin_{b} -\frac 1 2 ||\nabla_b f(x)||^2 + \frac 1 4 (\langle \nabla_b f(x),1\rangle)^2                                                                                                                \\
	 & \equiv \argmax_{b} ||\nabla_b f(x)||^2 - \frac 1 2 (\langle \nabla_b f(x),1\rangle)^2                                                                                                                           \\
	 & \equiv \argmax_{b}||\nabla_b f(x)||^2 - \frac 1 2 (\nabla_i f(x)+ \nabla_j f(x))^2                                                                                                                                 \\
	 & \equiv \argmax_{b} \frac 1 2 ||\nabla_b f(x)||^2 - \nabla_if(x)\nabla_jf(x)                                                                                                                                         \\
	 & \equiv \argmax_{b} \frac 1 2 (\nabla_i f(x) - \nabla_j f(x))^2)                                                                                                                                                      \\
	 & \equiv \argmax_{b} |\nabla_i f(x)- \nabla_j f(x)|. \label{eq:solve4b}
\end{aligned}
\end{equation}
Therefore, the GS-q rule chooses the $i$ and $j$ that are farthest apart, which are the coordinates with maximum and minimum values in $\nabla f(x)$.


\subsection{Greedy Rule is Steepest Descent in the 1-Norm (Lemma \ref{lemma:sd})}
\label{proof:sd}

The steepest descent method finds the descent direction that minimizes the function value in every iteration. That is,
\begin{equation}
	d = \argmin_{d \in \mathbb{R}^n | d^T 1 = 0} \left\{\nabla f(x)^T d+ \frac{1}{2\alpha}\|d\|_1^2\right\}. \label{eq:SD}
\end{equation}
The proof follows by constructing a solution to the steepest descent
problem~\eqref{eq:SD} which only has two non-zero entries.
The Lagrangian of \eqref{eq:SD} is,
\begin{equation*}
	\mathcal{L}(d,\lambda) = \nabla f(x)^T d+ \frac{1}{2\alpha} \|d\|_1^2 + \lambda d^T 1.
\end{equation*}
The sub-differential with respect to $d$ and $\lambda$ is given by
\begin{align*}
	\partial_d \mathcal{L}(d,\lambda)         & \equiv \nabla f(x) + \frac{1}{2\alpha} \partial \|d\|_1^2 + \lambda 1, \\
	\partial_{\lambda} \mathcal{L}(d,\lambda) & \equiv d^T 1.
\end{align*}
We have that the zero vector is an element of the sub-differential at the solution. From $0\in\partial_\lambda\mathcal{L}(d,\lambda)$ we have $d^T 1 =0$. From $0 \in \partial_d \mathcal{L}(d,\lambda)$ at the solution we require
\[
	2\alpha (-\nabla f(x)-\lambda 1) \in \partial \|d\|_1^2,
\]
or equivalently by using that $\partial_i\norm{d}_1^2 \equiv 2\norm{d}_1\text{sgn}(d_i)$ this subgradient inclusion is equivalent to having for each coordinate $i$ that
\begin{equation}
	\alpha (-\nabla_i f(x)-\lambda 1) = \|d\|_1 \text{sgn}(d_i),\label{eq:SDpartial}
\end{equation}
where the signum function sgn$(d_i)$ is $+1$ if $d_i$ is positive, $-1$ if $d_i$ is negative, and can take any value in the  interval $[-1,1]$ if $d_i$ is zero.

Let $i \in \argmax_i \{\nabla_i f(x)\}$ and $j \in \argmin_j \{\nabla_j f(x)\}$. Consider a solution $d$ such that $d_i = \delta, d_j = -\delta$ for some $\delta \in \R$ and $d_k=0$ for if $k\neq i$ and $k\neq j$.
By construction the vector $d$ has only two non-zero coordinates and satisfies the sum-to-zero constraint required for feasibility. Thus, we have a solution if we can choose $\delta$ to satisfy~\eqref{eq:SDpartial} for all coordinates.

The definition of $d$ implies $||d||_1= 2\delta$,
while sgn$(d_i)=1$, sgn$(d_j)=-1$ and sgn$(d_k) \in [-1,1]$.
Thus, for $d$ to be a steepest descent direction we must have:
\begin{align}
	-\alpha \nabla_i f(x) -\alpha\lambda  & = 2\delta \label{eq:cond1}          \\
	-\alpha \nabla_j f(x) - \alpha\lambda & =  -2\delta\label{eq:cond2}         \\
	-\alpha \nabla_k f(x) - \alpha\lambda & \in 2\delta[-1,1]. \label{eq:cond3}
\end{align}
Solving for $\lambda$ in \eqref{eq:cond1} gives
\begin{equation}
	\lambda = -\nabla_i f(x) - 2\delta/\alpha, \label{eq:lambda}
\end{equation}
and substituting this in \eqref{eq:cond2} gives,
%
\begin{equation}
	\delta = -\frac{\alpha}{4}(\nabla_i f(x) - \nabla_j f(x)).\label{eq:delta}
\end{equation}
It remains only to show that
\eqref{eq:cond3} is satisfied by $d$.
Using the value of $\lambda$~\eqref{eq:lambda} in~\eqref{eq:cond3} yields,
\begin{align*}
	-\alpha \nabla_k f(x) + \alpha \nabla_i f(x) +  2\delta & \in 2\delta[-1,1].
\end{align*}
Now, substituting the value for $\delta$~\eqref{eq:delta} gives
\[
	-\alpha\nabla_k f(x) + \alpha\nabla_i f(x) - \frac{\alpha}{2}(\nabla_i f(x) - \nabla_j f(x)) \in -\frac{\alpha}{2}(\nabla_i f(x) - \nabla_j f(x))[-1,1],
\]
and multiplying by $2/\alpha$ this is equivalent to
\[
	-2\nabla_k f(x)+\nabla_i f(x) + \nabla_j f(x) \in -(\nabla_i f(x) - \nabla_j f(x))[-1,1],\]
which can be satisfied for some value in $[-1,1]$ if
\[
	-2\nabla_k f(x)+\nabla_i f(x) + \nabla_j f(x) \leq |\nabla_i f(x) - \nabla_j f(x)|.
\]
As $\nabla_k f(x)$ is between $\nabla_i f(x)$ and $\nabla_j f(x)$, we can write it as a convex combination $\theta \nabla_i f(x) + (1-\theta) \nabla_j f(x)$ for some $\theta\in[0,1]$. Thus, we require
\begin{align*}
	 & -2(\theta \nabla_i f(x) + (1-\theta) \nabla_j f(x)) +
	\nabla_i f(x) + \nabla_j f(x)                            \\
	 & =
	(1-2 \theta) (\nabla_i f(x) - \nabla_j f(x)) \leq |\nabla_i f(x) - \nabla_j f(x)|,
\end{align*}
which holds because $(1-2\theta) \in [-1,1]$.

We have shown that a two-coordinate update $d$ satisfies the sufficient conditions to be a steepest descent direction in the $1$-norm.
Substituting $d$ back into the expression for steepest descent gives
\begin{align*}
	\min_{d \in \mathbb{R}^n | d^T 1 = 0} {\nabla f(x)^T d+ \frac{1}{2\alpha} ||d||_1^2}
	 & = \nabla_{ij} f(x)^T d_{ij}+ \frac{1}{2\alpha} ||d_{ij}||_1^2                                                                          \\
	 & \geq \min_{i,j} \cbr{\min_{d_{i,j} \in \mathbb{R}^2 | d_{i} + d_j = 0} {\nabla_{ij} f(x)^T d_{ij}+ \frac{1}{2\alpha} ||d_{ij}||_1^2}}.
\end{align*}
The reverse inequality follows from the fact that a two coordinate update cannot
lead to a smaller value than updating all coordinates, so we have
\begin{align*}
	\min_{d \in \mathbb{R}^n | d^T 1 = 0} {\nabla f(x)^T d+ \frac{1}{2\alpha} ||d||_1^2}
	 & = \min_{i,j} \cbr{\min_{d_{i,j} \in \mathbb{R}^2 | d_{i} + d_j = 0} {\nabla_{ij} f(x)^T d_{ij}+ \frac{1}{2\alpha} ||d_{ij}||_1^2}}.
\end{align*}


\section{Relating Lipschitz Constants}
\label{sec:L12}

\begin{proposition}
	Suppose $f$ is twice differentiable and
	\begin{equation}\label{eq:ell1-smoothness}
		\sup_{x : \abr{x, 1} = a } \max_{d} \cbr{ d^\top \nabla^2 f(x)d : \abr{d, 1} = 0, \text{supp}(d) = 2 , \|d\|_1 \leq 1 } = L_1.
	\end{equation}
	Then $f$ satisfies the following inequality:
	\begin{equation*}
		f(x + d) \leq f(x) + \abr{ \nabla f(x), d} + \frac{L_1}{2}\norm{d}_1^2,
	\end{equation*}
	for $x$ such that \( \abr{x, 1} = a \) and any $d$ such that
	$\abr{d, 1} = 0$.
	That is, \( f \) is full-coordinate Lipschitz smooth in the \( \ell_1 \)
	norm with constant \( L_1 \).
\end{proposition}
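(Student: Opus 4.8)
The plan is to reduce the $\ell_1$ descent inequality to a pointwise bound on the Hessian quadratic form along the segment from $x$ to $x+d$, via the exact second-order Taylor expansion
\[
f(x+d) = f(x) + \abr{\nabla f(x), d} + \int_0^1 (1-t)\, d^\top \nabla^2 f(x+td)\, d\; dt.
\]
The observation that makes the hypothesis applicable is that $\abr{x+td, 1} = a + t\abr{d,1} = a$ for every $t \in [0,1]$, so every point of the segment lies on the hyperplane $\{\abr{\argdot,1}=a\}$ over which $L_1$ is defined in~\eqref{eq:ell1-smoothness}. Hence it suffices to establish the pointwise estimate $d^\top \nabla^2 f(y)\, d \le L_1 \norm{d}_1^2$ for every $y$ with $\abr{y,1}=a$ and every $d$ with $\abr{d,1}=0$; integrating $\int_0^1 (1-t) L_1 \norm{d}_1^2\, dt = \tfrac{L_1}{2}\norm{d}_1^2$ then yields the proposition.

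Both $d \mapsto d^\top \nabla^2 f(y)\, d$ and $d \mapsto \norm{d}_1^2$ are positively $2$-homogeneous, so the pointwise estimate is equivalent to
\[
\max\cbr{ d^\top \nabla^2 f(y)\, d : \abr{d,1}=0,\ \norm{d}_1 \le 1 } \le L_1,
\]
a bound on the quadratic form over the polytope $C = \cbr{d : \abr{d,1}=0,\ \norm{d}_1 \le 1}$. By definition $L_1$ already bounds this form over the subset of $C$ with $\text{supp}(d)=2$, i.e. over the sum-zero two-coordinate directions, so the entire task is to show the maximum over all of $C$ is attained on that subset. I would do this by characterizing the extreme points of $C$: intersecting the cross-polytope $\cbr{\norm{d}_1 \le 1}$ with the hyperplane $\abr{d,1}=0$ discards every vertex $\pm e_i$ and retains only transversal intersections with its edges, and a one-line computation shows an edge from $\sigma e_i$ to $\sigma' e_j$ meets the hyperplane (at its midpoint) precisely when $\sigma' = -\sigma$, giving the extreme points $\tfrac12(e_i - e_j)$ for $i \ne j$. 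These are exactly the unit-$\ell_1$-norm, sum-zero, two-coordinate directions --- the static counterpart of the two-nonzero-coordinate solution in Lemma~\ref{lemma:sd}. Assuming $f$ is convex, so that $\nabla^2 f(y) \succeq 0$ and $d \mapsto d^\top \nabla^2 f(y)\, d$ is convex, its maximum over the compact polytope $C$ is attained at an extreme point; since every extreme point is a two-coordinate direction with $\norm{d}_1 = 1$, that maximum is at most $L_1$ by~\eqref{eq:ell1-smoothness}, completing the reduction.

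The main obstacle --- and the only place convexity is used --- is precisely this passage from two-coordinate directions to arbitrary sum-zero directions. An equivalent route that isolates the difficulty is to decompose $d$ as a nonnegative transport $d = \sum_{(i,j)} w_{ij}(e_i - e_j)$ of its positive mass onto its negative mass, so that $\sum_{(i,j)} w_{ij} = \tfrac12\norm{d}_1$ with no sign cancellation, and then to expand $d^\top \nabla^2 f(y)\, d$ over this sum: the diagonal terms are controlled directly by $L_1$, but the cross terms $(e_i - e_j)^\top \nabla^2 f(y)(e_k - e_l)$ involve up to four coordinates and are not constrained by~\eqref{eq:ell1-smoothness}, so they must be absorbed using the Cauchy--Schwarz inequality for the positive semidefinite form $u^\top \nabla^2 f(y)\, v$. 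This semidefiniteness is essential: for an indefinite Hessian the statement is false, since a constant Hessian of the form $ss^\top - cI$ with $s = (1,-1,1,-1)$ lets the spread-out direction $\tfrac14 s$ strictly exceed the two-coordinate bound. I would therefore make the convexity hypothesis explicit, consistent with the convex (PL and SVM-dual) setting of the paper.
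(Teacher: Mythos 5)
Your proof is correct and follows essentially the same route as the paper's: both reduce the claim, via a second-order Taylor expansion along the segment (which stays in the hyperplane $\abr{\argdot,1}=a$), to bounding the quadratic form $d \mapsto d^\top \nabla^2 f(y) d$ over the polytope $\cbr{d : \abr{d,1}=0,\ \norm{d}_1 \le 1}$, and both then argue that the maximum is attained at an extreme point and that every extreme point is a sum-zero two-coordinate direction of unit $\ell_1$-norm. The minor differences are cosmetic (you use the integral form of the remainder where the paper uses the Lagrange form, and you identify the extreme points as the midpoints $\tfrac12(e_i-e_j)$ of the cross-polytope's edges where the paper runs a perturbation-and-contradiction argument showing any point with three or more nonzero entries is a midpoint of two feasible points). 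The one substantive point in your favor: the step ``the maximum over the polytope is attained at an extreme point'' requires the objective $d \mapsto d^\top \nabla^2 f(y) d$ to be convex, i.e.\ $\nabla^2 f(y) \succeq 0$. The paper simply calls \eqref{eq:variational-smoothness} a ``convex maximization problem'' without stating a convexity hypothesis on $f$, whereas you make the hypothesis explicit and justify its necessity with the indefinite Hessian $ss^\top - cI$, $s=(1,-1,1,-1)$, for which the spread-out direction $\tfrac14 s$ strictly beats every two-coordinate direction. That counterexample shows the proposition as literally stated is false without convexity, so your added hypothesis is a genuine (and needed) sharpening rather than a weakening of the result.
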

\begin{proof}
	Consider the optimization problem
	\begin{equation}\label{eq:variational-smoothness}
		\max_{d} \cbr{ d^\top \nabla^2 f(x)d : \abr{d, 1} = 0, \norm{d}_1 \leq 1 }.
	\end{equation}
	We will show that the maximum is achieved by at least one \( d \)
	satisfying \( d_i = - d_j \neq 0 \), \( d_k = 0 \) for all \( k \neq i, j \).
	That is, a two coordinate update achieves the maximum.

	First, observe that \cref{eq:variational-smoothness} is a convex maximization problem over a
	(convex) polyhedron.
	As a result, at least one solution
	occurs at an extreme point of the constraint set,
	\[ \calD = \cbr{d : \abr{d, 1} = 0, \norm{d}_1 \leq 1}. \]
	The proof proceeds by showing that all extreme points of \( \calD \) contain exactly
	two non-zero entries.
	Let \( d_e \) be any extreme point of \( \calD \) and suppose by way
	of contradiction that \( d_e \) has at least three non-zero entries.
	Denote these entries as \( d_1, d_2, d_3 \).
	Since at least one entry of \( d_e \) must be negative and one must be positive,
	we may assume without loss of generality that \( d_1, d_2 > 0 \) and
	\( d_3 < 0 \).

	Let \( \epsilon > 0 \) and define \( d_e' = d_e + e_1 \epsilon - e_2 \epsilon \).
	For \( \epsilon  \) sufficiently small it holds that
	\( d_1 + \epsilon > 0 \) and \( d_2 - \epsilon > 0 \) so that
	\[
		(d_1 + \epsilon) + (d_2 - \epsilon) + d_3 = d_1 + d_2 + d_3.
	\]
	We conclude
	\begin{align*}
		\norm{d'_e}_1
		 & = |d_1 + \epsilon| + |d_2 - \epsilon| + |d_3| \\
		 & = (d_1 + \epsilon) + (d_2 - \epsilon) + |d_3| \\
		 & = |d_1| + |d_2| + |d_3|                       \\
		 & = d_1 + d_2 + d_3                             \\
		 & = \norm{d_e}_1                                \\
		 & \leq 1.
	\end{align*}
	Thus, \( d_e' \in \calD \).
	Define \( d_e'' = d_e - e_1 \epsilon + e_2 \epsilon \)
	and observe
	\( d_e'' \in \calD \) by a symmetric argument.
	Moreover,
	\[
		d_e = \half d_e' + \half d_e'',
	\]
	i.e. the extreme point is a convex combination of two points in \( \calD \).
	This contradicts the definition of an extreme point, so we have proved
	that every extreme point of \( \calD \) has at most two non-zero entries
	Since no point of \( \calD \) can have exactly one non-zero entry
	and \( 0 \) is the relative interior of \( \calD \), we have shown every extreme
	point has exactly two non-zero entries.

	As a result, \eqref{eq:variational-smoothness}
	is maximized at at least one extreme point \( d_e \),
	where \( \text{supp}(d_e) = 2 \).
	Thus, we may restrict optimization to directions of support two,
	giving
	\begin{align*}
		 & \max_{d} \cbr{ d^\top \nabla^2 f(x)d : \abr{d, 1} = 0, \|d\|_1 \leq 1 }                  \\
		 & \quad \quad =
		\max_{d} \cbr{ d^\top \nabla^2 f(x)d : \abr{d, 1} = 0, \text{supp}(d) = 2, \|d\|_1 \leq 1 } \\
		 & \quad \quad \leq L_1.
	\end{align*}
	It is now straightforward to obtain the final result using a Taylor expansion
	and the Lagrange form of the remainder.
	In particular, for some parameter \( x' \text{Conv}(\cbr{x, x + d}) \) we have
	\begin{align*}
		f(x & + d)
		= f(x) + \abr{\nabla f(x), d} + \frac{1}{2} d^\top \nabla^2 f(x + \alpha d) d \\
		    & \leq f(x) + \abr{\nabla f(x), d}
		+ \frac{1}{2} \norm{d}_1^2 \max_{v}  \cbr{
			v^\top \nabla^2 f(x') v
			: \abr{v, 1} = 0, \norm{v}_1 \leq 1
		}                                                                             \\
		    & = f(x) + \abr{\nabla f(x), d}
		+ \frac{1}{2} \norm{d}_1^2 \max_{v}  \cbr{
			v^\top \nabla^2 f(x') v
			: \abr{v, 1} = 0, \text{supp}(v) = 2, \norm{v}_1 \leq 1
		}                                                                             \\
		    & = f(x) + \abr{\nabla f(x), d} + \frac{L_1}{2} \norm{d}_1^2,
	\end{align*}
	which gives the result.
\end{proof}

\begin{proposition}\label{prop:L12}
	The constant \( L_1 \) in~\eqref{eq:ell1-smoothness} is exactly equal to \( \frac{L_2}{2} \).
\end{proposition}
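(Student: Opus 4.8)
The plan is to write both constants as suprema of the very same Hessian quadratic form $\abr{d, \nabla^2 f(x) d}$ over two-coordinate, sum-to-zero directions $d$, with the only difference being whether $d$ is normalized in the $1$-norm (for $L_1$) or the $2$-norm (for $L_2$). Once both are in this form, the factor of $\tfrac12$ comes entirely from the identity $\norm{d}_1^2 = 2\norm{d}_2^2$, which holds for every $d$ with $\abr{d,1}=0$ and $\supp(d)=2$ and was recorded at the start of the connections subsection. So no genuinely new analytic content is needed beyond a rescaling.

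First I would establish a Hessian characterization of $L_2$ mirroring the definition~\eqref{eq:ell1-smoothness} of $L_1$ in the preceding proposition. For twice-differentiable $f$, a Taylor expansion with the Lagrange form of the remainder shows that the two-coordinate Lipschitz condition~\eqref{eq:L2-smooth} corresponds to the second-order bound $\abr{d, \nabla^2 f(x) d} \leq L_2 \norm{d}_2^2$ over all feasible directions, i.e.
\[
L_2 = \sup_{x : \abr{x,1} = a} \max_{d} \cbr{ \abr{d, \nabla^2 f(x) d} : \abr{d,1} = 0,\ \supp(d) = 2,\ \norm{d}_2 \leq 1 }.
\]
Here the support is already fixed to two coordinates, so, unlike in the $L_1$ proposition, no extreme-point argument is required: the content is purely in relating the first-order condition~\eqref{eq:L2-smooth} to the second-order quantity.

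With $L_2$ written this way, the conclusion is a one-line change of variables. For a two-coordinate sum-to-zero direction, the norm identity gives $\norm{d}_1 \leq 1 \iff \norm{d}_2 \leq 1/\sqrt2$, so the feasible set defining $L_1$ is exactly the feasible set defining $L_2$ rescaled by $1/\sqrt2$. Substituting $d = e/\sqrt2$ with $\norm{e}_2 \leq 1$ and using that the quadratic form is homogeneous of degree two, $\abr{d, \nabla^2 f(x) d} = \tfrac12 \abr{e, \nabla^2 f(x) e}$, I would obtain
\[
L_1 = \sup_{x} \max_{\norm{d}_1 \leq 1} \abr{d, \nabla^2 f(x) d} = \tfrac12 \sup_{x} \max_{\norm{e}_2 \leq 1} \abr{e, \nabla^2 f(x) e} = \tfrac{L_2}{2},
\]
where all maxima run over two-coordinate sum-to-zero directions.

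I expect the main obstacle to be the middle step: cleanly justifying the quadratic-form characterization of $L_2$ from its stated first-order form~\eqref{eq:L2-smooth}. The delicate point is that the gradient difference $\nabla_{ij} f(x+d) - \nabla_{ij} f(x)$ is a two-vector that need not be parallel to $d_{ij}$, whereas the quadratic form sees only its projection onto $d_{ij}$; the characterization I actually need is the one used by the convergence analyses, namely the descent-lemma constant satisfying $f(x+d) \leq f(x) + \abr{\nabla f(x), d} + \tfrac{L_2}{2}\norm{d}_2^2$, and it is this constant I take as $L_2$. Once that characterization is pinned down, the remainder is just the norm identity and homogeneity, and the equality $L_1 = L_2/2$ follows immediately.
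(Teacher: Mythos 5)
Your proposal is correct and follows essentially the same route as the paper: both arguments express $L_1$ and $L_2$ as suprema of the Hessian quadratic form over two-coordinate sum-to-zero directions and then extract the factor $\tfrac12$ from the identity $\norm{d}_1^2 = 2\norm{d}_2^2$ together with degree-two homogeneity. The one subtlety you flag --- relating the first-order condition~\eqref{eq:L2-smooth} to the Hessian characterization --- is exactly what the paper isolates into Proposition~\ref{prop:second-order-def}, which resolves it by working with the dual norm restricted to the feasible subspace rather than by switching to the descent-lemma constant.
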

\begin{proof}
	Let \( d \in \R^n \) such that \( \text{supp}(d) = 2 \) and
	\( \abr{d, 1} = 0 \).
	WLOG, suppose that the two non-zero entries of \( d \) are \( d_1 \) and \( d_2 \).
	Observe that \( \abr{d, 1} = 0 \) implies \( d_1 = - d_2 \)
	and \( \norm{d}_1 = \sqrt{2}\norm{d}_2 \). Thus we have
	\begin{align*}
		L_2
		 & = \sup_{x : \abr{x, 1} = a} \max_{d} \cbr{ d^\top \nabla^2 f(x)d : \abr{d, 1} = 0, \text{supp}(d) = 2 , \|d\|_2 \leq 1 } \\
		 & =
		2 \sup_{x : \abr{x, 1} = a} \max_{d} \cbr{ d^\top \nabla^2 f(x)d : \abr{d, 1} = 0, \text{supp}(d) = 2 , \|d\|_1 \leq 1 }    \\
		 & =
		2 L_1,
	\end{align*}
	where we have used \cref{prop:second-order-def} to relate the variational
	characterizations to the Lipschitz constants in question.
	This completes the proof.

\end{proof}

\begin{proposition}\label{prop:second-order-def}
	Let \( \norm{\cdot} \) an arbitrary norm and define the dual norm on the feasible space,
	\[
		\norm{v}_* = \sup \cbr{ z^\top v : \abr{z, 1} = 0, \text{supp}(z) = 2, \norm{z} \leq 1}.
	\]
	Then the variational characterization based on the Hessian,
	\[
		L =
		\sup_{x : \abr{x, 1} = a } \max_{d} \cbr{ d^\top \nabla^2 f(x)d : \abr{d, 1} = 0, \text{supp}(d) = 2 , \|d\| \leq 1 },
	\]
	gives the two-coordinate Lipschitz constant
	of \( \nabla f \) (see \cref{eq:L2-smooth}) in norm \( \norm{\cdot} \)
	on the feasible space.
\end{proposition}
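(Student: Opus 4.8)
The plan is to prove the two inequalities $L \le L_{\text{Lip}}$ and $L \ge L_{\text{Lip}}$ separately, where $L$ is the Hessian variational quantity in the statement and $L_{\text{Lip}}$ denotes the smallest constant for which $\|\nabla_{ij}f(x+d) - \nabla_{ij}f(x)\|_* \le L_{\text{Lip}}\|d\|$ holds for all feasible $x$ and all support-two sum-zero directions $d$ (with support $\{i,j\}$). Both directions rest on the fundamental theorem of calculus, $\nabla f(x+d) - \nabla f(x) = \left(\int_0^1 \nabla^2 f(x+td)\,dt\right) d$, combined with the variational definition of the dual norm $\|\cdot\|_*$ on the feasible support-two space.

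First I would establish the easy direction $L \le L_{\text{Lip}}$. Fixing a feasible support-two $d$ and a point $x$, note that $d$ is itself an admissible test vector for the dual norm, so the pairing inequality $d^\top v \le \|d\|\,\|v\|_*$ with $v = \nabla_{ij}f(x+td) - \nabla_{ij}f(x)$, together with the Lipschitz bound, gives $d^\top(\nabla f(x+td) - \nabla f(x)) \le \|d\| \cdot L_{\text{Lip}}\,\|td\| = L_{\text{Lip}}\,t\,\|d\|^2$. Dividing by $t$ and letting $t \to 0^+$ recovers the second-order directional derivative, yielding $d^\top \nabla^2 f(x)\,d \le L_{\text{Lip}}\,\|d\|^2$; taking the supremum over $x$ and over feasible support-two $d$ with $\|d\|\le 1$ gives $L \le L_{\text{Lip}}$. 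This step is clean and requires no curvature assumption.

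Next I would establish the reverse direction $L \ge L_{\text{Lip}}$, that is, that the Hessian bound implies the Lipschitz estimate. Writing the gradient difference through the FTC and expanding the dual norm as a supremum over admissible $z$ with $\|z\|\le 1$, the target reduces to bounding the bilinear form $z^\top \nabla^2 f(x')\, d$ for feasible support-two $z$ and $d$ by $L\,\|z\|\,\|d\|$. For a symmetric positive semidefinite Hessian this follows from the generalized Cauchy--Schwarz inequality $z^\top H d \le \sqrt{z^\top H z}\,\sqrt{d^\top H d}$, each factor of which is controlled by the quadratic-form bound defining $L$. Integrating over the segment and taking the supremum over $z$ then delivers $\|\nabla_{ij}f(x+d) - \nabla_{ij}f(x)\|_* \le L\,\|d\|$.

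The main obstacle is this reverse direction. It requires positive semidefiniteness of $\nabla^2 f$ so that the one-sided maximum of the quadratic form in the definition of $L$ genuinely controls the full two-sided operator behaviour; without convexity one would instead need $\max_d |d^\top H d|$ rather than $\max_d d^\top H d$. A second, more technical point is that the admissible test vectors $z$ defining $\|\cdot\|_*$ may be supported on coordinate pairs different from the support $\{i,j\}$ of $d$, so the Cauchy--Schwarz step must be applied to vectors of differing support; this is admissible because the inequality holds for any pair of vectors under a PSD $H$, but it is the place where care is needed to keep every vector inside the feasible sum-zero cone. Once both inequalities are in hand, $L = L_{\text{Lip}}$ follows, which is exactly the claim that the Hessian variational characterization equals the two-coordinate Lipschitz constant of $\nabla f$ in the norm $\|\cdot\|$.
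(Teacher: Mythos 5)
Your proposal is correct and follows essentially the same route as the paper's proof: the fundamental theorem of calculus plus the dual-norm pairing for one direction, and a small-step limiting argument on the quadratic form $d^\top \nabla^2 f(x) d$ for the other. If anything you are more careful than the paper, which silently performs the step $\norm{\nabla^2_{ij} f(x')d}_* \leq \norm{d}\sup_{d'}\cbr{d'^\top \nabla^2_{ij} f(x')d'}$ that you correctly identify as a generalized Cauchy--Schwarz inequality requiring positive semidefiniteness of the Hessian (i.e.\ convexity, which is the setting in which the paper applies this result).
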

\begin{proof}
	Let \( x \) be feasible (i.e. \( \abr{x, 1} = a \)) and define
	\[ \calD = \cbr{d : \abr{d, 1} = 0, \text{supp}(d) = 2, \norm{d}_1 \leq 1}. \]
	Suppose \( d \) is some be feasible 2-coordinate update, not necessarily unit norm.
	The fundamental theorem of calculus implies
	\[
		\nabla_{ij}f(x + d) - \nabla_{ij}f(x) = \int_{0}^1 \nabla_{ij}^2 f(x + t d) d dt
	\]
	Taking norms on both sides, we obtain
	\begin{align*}
		\norm{\nabla_{ij}f(x + d) - \nabla_{ij}f(x)}_*
		 & = \norm{\int_{0}^1 \nabla_{ij}^2 f(x + t d) d dt}_*                                     \\
		 & \leq \int_0^1 \norm{\nabla_{ij}^2 f(x') d }_* dt                                        \\
		 & \leq \norm{d} \int_0^1 \sup_{d' \in \calD} \cbr{ d'^\top \nabla_{ij}^2 f(x + td) d'} dt \\
		 & \leq L \norm{d},
	\end{align*}
	where we have used the definition of the dual norm.
	For the reverse inequality, let \( \tilde L \) be the Lipschitz constant
	of \( \nabla f \) in norm \( \norm{\cdot} \).
	Observe that for any feasible \( x \)
	and 2-coordinate update \( d  \), there exists \( \alpha \in (0,1) \)
	and \( \tilde x = x + \alpha d \) such that
	\[
		\nabla_{ij}^2 f(\tilde x) d = \nabla_{ij} f(x + d) - \nabla_{ij} f(x).
	\]
	Using this, we obtain
	\begin{align*}
		d^\top \nabla_{ij}^2 f(\tilde x) d
		 & \leq \norm{d} \norm{\nabla_{ij}^2 f(\tilde x) d}_*           \\
		 & = \norm{d} \norm{\nabla_{ij} f(x + d) - \nabla_{ij} f(x)}_*  \\
		 & \leq \tilde L \norm{d}^2                                   .
	\end{align*}
	Dividing by sides by \( \norm{d}^2 \), taking \( \norm{d} \rightarrow 0 \), and supremizing over \( x \), \( d \)
	gives
	\[
		L = \sup_{x : \abr{x, 1} = a } \max_{d \in \calD} \cbr{ d^\top \nabla^2 f(x)d} \leq \tilde L
	\]
	We conclude \( \tilde L = L \) as desired.
\end{proof}

\section{Relationship Between Proximal-PL Constants}
\label{proxpl1norm}

\begin{lemma}
	Suppose that \( F(x) = f(x) + g(x) \) satisfies the proximal-PL inequality
	in the \( \ell_2 \)-norm with constants \( L_2, \mu_2 \).
	Then \( F \) also satisfies the proximal-PL inequality in the \( \ell_1 \)-norm
	with constants \( L_1 \) and \( \mu_1 \in \sbr{\mu_2 / n, \mu_2} \).
\end{lemma}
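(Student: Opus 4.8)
The plan is to reduce both proximal-PL gaps to a comparison between two minimization problems that differ only in the norm, by first eliminating the Lipschitz constants through a scaling argument and then invoking the norm equivalence $\norm{r}_2 \le \norm{r}_1 \le \sqrt{n}\,\norm{r}_2$. First I would write out the two gap functions for a feasible $x$, using $r = y-x$ ranging over the linear subspace $V = \{r : r^T 1 = 0\}$ (since both $x$ and $y$ satisfy the summation constraint): the $\ell_2$ gap is $\mathcal{D}_2(x) = -2L_2 \min_{r \in V}[\innerprod{\nabla f(x)}{r} + \tfrac{L_2}{2}\norm{r}_2^2]$, and, using $L_1 = L_2/2$ from Proposition~\ref{prop:L12}, the $\ell_1$ gap is $\mathcal{D}_1(x) = -2L_1 \min_{r \in V}[\innerprod{\nabla f(x)}{r} + \tfrac{L_1}{2}\norm{r}_1^2]$.

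The key observation is a scaling identity: for any norm and any $c > 0$, the substitution $r \mapsto r/c$ (which maps $V$ to itself, as $V$ is a subspace) gives $\min_{r \in V}[\innerprod{\nabla f(x)}{r} + \tfrac{c}{2}\norm{r}^2] = \tfrac{1}{c}\min_{r \in V}[\innerprod{\nabla f(x)}{r} + \tfrac{1}{2}\norm{r}^2]$. Applying this to each gap cancels both the prefactor and the quadratic coefficient, leaving $\mathcal{D}_1(x) = -2A_1$ and $\mathcal{D}_2(x) = -2A_2$, where $A_p = \min_{r \in V}[\innerprod{\nabla f(x)}{r} + \tfrac{1}{2}\norm{r}_p^2] \le 0$. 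Thus the Lipschitz constants drop out of the comparison entirely, and the whole problem reduces to relating $A_1$ and $A_2$.

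Next I would sandwich $A_1$ between multiples of $A_2$ using $\norm{r}_2^2 \le \norm{r}_1^2 \le n\,\norm{r}_2^2$. The left inequality holds pointwise in $r$, so taking minima gives $A_2 \le A_1$; the right inequality gives $A_1 \le \min_{r \in V}[\innerprod{\nabla f(x)}{r} + \tfrac{n}{2}\norm{r}_2^2] = \tfrac{1}{n}A_2$, again by the scaling identity. Since $A_2 \le 0$, combining these yields $A_2 \le A_1 \le \tfrac{1}{n}A_2 \le 0$, and multiplying by $-2$ gives $\tfrac{1}{n}\mathcal{D}_2(x) \le \mathcal{D}_1(x) \le \mathcal{D}_2(x)$ for every feasible $x$. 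Feeding this into the hypothesis $\tfrac{1}{2}\mathcal{D}_2(x) \ge \mu_2(F(x)-F^*)$, the lower bound $\mathcal{D}_1 \ge \tfrac{1}{n}\mathcal{D}_2$ shows $\mu_2/n$ is a valid $\ell_1$ constant, while the upper bound $\mathcal{D}_1 \le \mathcal{D}_2$ shows (taking infima of $\tfrac{1}{2}\mathcal{D}_p(x)/(F(x)-F^*)$ over $x$) that the tightest $\ell_1$ constant is at most $\mu_2$; interpreting $\mu_1, \mu_2$ as the tightest valid constants then gives $\mu_1 \in [\mu_2/n, \mu_2]$.

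The main obstacle is that the clean scaling identity relies on the feasible set being a linear subspace (a cone about $x$), which is exactly the situation for the pure summation constraint appearing in the definition of $\mathcal{D}(x,L_1)$ in~\eqref{eq:proxL1}. For a genuinely general nonsmooth or bound term $g$ — such as the box constraints of~\eqref{eq:Dbound} — the substitution $r \mapsto r/c$ no longer maps the feasible set to itself, so the prefactors will not cancel automatically; in that case one must instead bound the two minimization objectives pointwise and carefully track the interaction between $L_1 = L_2/2$ and the norm-equivalence constants in order to still land exactly in the interval $[\mu_2/n, \mu_2]$.
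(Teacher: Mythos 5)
Your argument is correct for the case the main text actually invokes (pure summation constraint), but it proves less than the lemma as stated, and it gets there by a genuinely different route. The paper does not use your exact scaling identity; it keeps the general composite term $g$ throughout and chains three inequalities: the pointwise norm comparison $\norm{y-x}_2^2 \geq \tfrac{1}{n}\norm{y-x}_1^2$, then a rescaling of the quadratic coefficient from $\tfrac{L_2}{2n}$ to $\tfrac{L_1}{2}$ via Lemma~9 of \citet{karimireddy2018adaptive} with $\beta = \tfrac{L_2}{L_1 n} \in (0,1]$ (and symmetrically $\beta = \tfrac{L_1}{L_2}$ for the reverse direction). That cited lemma is essentially the substitution $y \mapsto x + \beta(y-x)$ combined with convexity of the linearized objective plus $g$, which is the correct generalization of your substitution $r \mapsto r/c$ to feasible sets that are convex but not cones about $x$. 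Your version buys a fully self-contained, two-line cancellation of all Lipschitz constants — cleaner and arguably more transparent when $g$ is the indicator of the affine subspace $\{y : y^T 1 = \gamma\}$, which is exactly the setting of \eqref{eq:proxL1} where the paper quotes $\mu_2/n \leq \mu_1 \leq \mu_2$. What it does not buy is the lemma as written, which asserts the result for a general $F = f + g$: your final paragraph correctly identifies that the scaling identity breaks for, e.g., box constraints, but then leaves that case as an unexecuted sketch ("bound the two objectives pointwise and carefully track the interaction"). That is a real gap relative to the statement; the missing ingredient is precisely the convexity-based rescaling inequality $\min_y\{h(y) + \beta V(y)^2\} \leq \beta \min_y\{h(y) + V(y)^2\}$ for convex $h$ with $h(x)=0$ and $V$ positively homogeneous about $x$, which holds for any convex $g$ and would let your sandwich $\tfrac{1}{n}\mathcal{D}_2 \leq \mathcal{D}_1 \leq \mathcal{D}_2$ go through unchanged. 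One further nitpick: for the upper bound $\mu_1 \leq \mu_2$ you should phrase the conclusion as "any valid $\ell_1$ constant is also a valid $\ell_2$ constant," as the paper does, rather than relying on both constants being defined as exact infima.
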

\begin{proof}
	Proximal-PL inequality in the \( \ell_2 \)-norm implies
	\begin{align*}
		F(x) - F(x^*)
		 & \leq -\frac{L_2}{\mu_2} \min_{y} \cbr{\abr{\nabla f(x), y - x} + \frac{L_2}{2} \norm{y - x}_2^2 + g(y) - g(x)}          \\
		 & \leq -\frac{L_2}{\mu_2} \min_{y} \cbr{\abr{\nabla f(x), y - x} + \frac{L_2}{2n} \norm{y - x}_1^2 + g(y) - g(x)}         \\
		 & \leq -\frac{L_2 L_1 n}{L_2\mu_2} \min_{y} \cbr{\abr{\nabla f(x), y - x} + \frac{L_1}{2} \norm{y - x}_1^2 + g(y) - g(x)} \\
		 & = -\frac{L_1 n}{\mu_2} \min_{y} \cbr{\abr{\nabla f(x), y - x} + \frac{L_1}{2} \norm{y - x}_1^2 + g(y) - g(x)},
	\end{align*}
	where the last inequality follows from \citet{karimireddy2018adaptive}[Lemma 9] with the choice of
	\( \beta = \frac{L_2}{L_1 n} \), \( h(y) = \abr{\nabla f(x), y - x} + g(y) - g(x) \), and \( V(y) = \sqrt{L_2/2n}\norm{y - x}_1 \).
	Note that \( \beta \in (0, 1] \) since \( L_1 n \geq L_2 \)
	and \( h(x) = V(x) = 0 \) so that the conditions of the lemma are satisfied.
	We conclude that proximal-PL inequality holds with \( \mu_1 \geq \mu_2 / n \).

	We establish the reverse direction similarly; starting from proximal-PL
	in the \( \ell_1 \)-norm,
	\begin{align*}
		F(x) - F(x^*)
		 & \leq -\frac{L_1}{\mu_1} \min_{y} \cbr{\abr{\nabla f(x), y - x} + \frac{L_1}{2} \norm{y - x}_1^2 + g(y) - g(x)}        \\
		 & \leq -\frac{L_1}{\mu_1} \min_{y} \cbr{\abr{\nabla f(x), y - x} + \frac{L_1}{2} \norm{y - x}_2^2 + g(y) - g(x)}        \\
		 & \leq -\frac{L_1 L_2}{L_1\mu_1} \min_{y} \cbr{\abr{\nabla f(x), y - x} + \frac{L_2}{2} \norm{y - x}_2^2 + g(y) - g(x)} \\
		 & = -\frac{L_2}{\mu_1} \min_{y} \cbr{\abr{\nabla f(x), y - x} + \frac{L_2}{2} \norm{y - x}_2^2 + g(y) - g(x)},
	\end{align*}
	where now we have used the same lemma with $V(y) = \sqrt{L_1/2}\norm{y-x}_2$ and  \( \beta = \frac{L_1}{L_2} \), noting that \( \beta \in (0, 1] \)
	since \( L_1 \leq L_2 \).
	This shows that \( \mu_2 \geq \mu_1 \), which completes the proof.
\end{proof}

\section{Analysis of GS-q for Bound-Constrained Problem}\label{app:GSq}

In this section, we show linear convergence of greedy 2-coordinate descent
under a linear equality constraint and bound constraints for the problem
in~\eqref{eq:bounds} when using the GS-q rule.
First, we introduce two definitions which underpin the theoretical machinery
used in this section.
\begin{definition}[Conformal Vectors]
	Let \( d, d' \in \R^n \). We say that \( d' \) is conformal to \( d \) if
	\[
		\cbr{i : d_i' \neq 0} \subseteq \cbr{i : d_i \neq 0},
	\]
	that is, the support of \( d' \) is a subset of the support of \( d \),
	and \( d_i d_i' \geq 0 \) for every \( i \in \cbr{1, \ldots n} \).
\end{definition}

\begin{definition}[Elementary Vector]
	Let \( S \subset \R^n \) be a subspace. A vector \( d \in \calS \) is
	an elementary vector of \( \calS \) if there does not exist \( d' \)
	conformal to \( d \) with strictly smaller support, that is
	\[
		\cbr{i : d_i' \neq 0} \subsetneq \cbr{i : d_i \neq 0}.
	\]
\end{definition}
With these definitions in hand, we can state
\cref{lemma:non_elementary_realizations}, which is the key property we use in
our proof strategy.
\begin{lemma}[Conformal Realizations]
	\label{lemma:non_elementary_realizations}
	Let $S$ be a subspace of $\R^n$ and $t = \min_{x \in S} \supp(x)$.
	Let $\tau \in \{t, \dots, n\}$. Then every non-zero vector $x$ of $S \subseteq \R^n$ can be realized as the sum
	\[ x = d_1 + \dots + d_{s} + d_{s+1}, \]
	where $d_1, \dots, d_{s}$ are elementary vectors of $S$ that are conformal to $x$ and $d_{s+1} \in S$ is a vector conformal to $x$ with $\supp(d_{s+1}) = \tau$.  Furthermore, $s \leq n - \tau$.
\end{lemma}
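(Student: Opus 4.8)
The plan is to adapt the classical conformal realization argument (in the style of Rockafellar, as used by \citet{tseng2009block}): peel off one elementary vector at a time and induct on the support size $\supp(x)$ until the remainder has support exactly $\tau$.

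First I would establish the single peeling step. Suppose $x \in S$ is nonzero and not elementary; then by definition there is an elementary vector $e \in S$ conformal to $x$ whose support is a strict subset of that of $x$. Since $e$ is conformal to $x$, on each index $i$ with $e_i \neq 0$ we have $e_i x_i > 0$, so the ratio $x_i / e_i$ is strictly positive; I set $\lambda = \min_{i : e_i \neq 0} x_i / e_i > 0$ and $d = \lambda e$. I would then check that $x - d$ remains conformal to $x$ with strictly smaller support: writing $x_i - \lambda e_i = e_i(x_i/e_i - \lambda)$ on the support of $e$ shows this entry keeps the sign of $e_i$ (equivalently of $x_i$) or vanishes, entries off the support of $e$ are unchanged, and the index attaining the minimum is driven to zero. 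Note that $d = \lambda e$ is again elementary (minimality of support is scale invariant), and conformality composes along the chain of remainders, so every vector we peel is conformal to the original $x$.

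Next I would iterate. Setting $x^{(0)} = x$ and $x^{(r)} = x^{(r-1)} - d_r$ with $d_r$ produced as above, I obtain $d_1, \dots, d_s$ elementary and conformal to $x$ with $\supp(x^{(r)}) < \supp(x^{(r-1)})$, and I stop at the first index $s$ with $\supp(x^{(s)}) = \tau$, setting $d_{s+1} = x^{(s)}$. This $d_{s+1}$ lies in $S$ (a subspace is closed under these operations), is conformal to $x$, and has support $\tau$ by construction. Since the support strictly decreases at each step from $\supp(x)$ down to $\tau$, there are at most $\supp(x) - \tau \le n - \tau$ peeling steps, which is exactly the claimed bound $s \le n - \tau$.

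The hard part will be guaranteeing that the remainder reaches support \emph{exactly} $\tau$ rather than overshooting, since a single peel only guarantees a drop of at least one. I would handle this either by refining the peeling step so that a single coordinate is zeroed at a time, or --- which is all that our applications require, where $\tau = t$ --- by observing that stopping at the last nonzero remainder automatically yields support $t$: in the summation-constrained subspace $S = \{d : \abr{d,1} = 0\}$ every elementary vector has support exactly $2 = t$ (any $d \in S$ with three or more nonzeros has mixed signs, so some two-support vector of $S$ conformal to it has strictly smaller support), so each remainder we stop at has support $2$. The same observation shows that any $x$ with $\supp(x) > \tau$ is non-elementary, so the peeling step is applicable at every stage until support $\tau$ is reached.
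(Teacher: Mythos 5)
Your proof is essentially the paper's own argument: both invoke the existence of an elementary vector conformal to $x$ (Bertsekas, Proposition~9.22), subtract the largest conformal multiple of it (your $\lambda = \min_{i : e_i \neq 0} x_i/e_i$ is exactly the paper's $\gamma$), observe that the support strictly decreases while conformality is preserved, and iterate to obtain $s \le n - \tau$. The overshooting issue you flag is genuine but is equally present in the paper's proof, which in fact only terminates with $\supp(d_{s+1}) \le \tau$ despite the statement claiming equality; since only the inequality is used downstream, your remark correctly identifies the one loose end rather than introducing a gap of your own.
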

We include a proof in Appendix \ref{appendix:conformal_realizations};
see \citet[Proposition~6.1]{tseng2009block} for an alternative (earlier) statement and proof.
Using this tool, we prove the following convergence rate for 2-coordinate
descent with the GS-q rule.
\begin{theorem}
	\label{theorem:convergence_box}
	Let the function $F(x)= f(x)+ h(x)$, where $f: \mathbb{R}^n \xrightarrow{}  \mathbb{R}$ is a smooth function and $h(x)$ is the box constraint indicator,
	\begin{align*}
		h(x) =  \begin{cases}
			        0      & \mbox{if $l_i \leq  x_i \leq u_i$ for all $i \in \{1, \dots, n\}$} \\
			        \infty & \mbox{otherwise}
		        \end{cases}
	\end{align*}
	Assume that \( F \) satisfies the proximal-PL condition in the \
	2-norm with constant constant $\mu_2$ and that
	$f$ is 2-coordinate-wise Lipschitz in the 2-norm.
	Then, minimizing
	\begin{align*}
		 & \min_{x\in\R^n} f(x), \nonumber \hspace{0.3 cm} \\\text{subject to     } & \abr{x, 1} = \gamma, \, x_i \in [l_i, u_i]
	\end{align*}
	using 2-coordinate descent with coordinate blocks selected according
	to the GS-q rule obtains the following linear rate of convergence:
	\[
		f(x^k) - f^* \leq \parens{1 - \frac{\mu_2}{L_{2}(n - 1)}}^k \parens{f(x^0) - f^*}.
	\]
\end{theorem}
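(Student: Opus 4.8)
The plan is to derive a one-step progress guarantee by pairing the $2$-coordinate smoothness bound with the proximal-PL inequality, where the essential work is to show that the single best feasible $2$-coordinate block recovers at least a $1/(n-1)$ fraction of the progress made by the full constrained proximal step. Write $Q_k(d) = \innerprod{\nabla f(x^k)}{d} + \frac{L_2}{2}\norm{d}_2^2$ for the quadratic model at $x^k$ with $\alpha^k = 1/L_2$. First I would use the $2$-coordinate Lipschitz assumption~\eqref{eq:L2-smooth} to obtain the descent bound $f(x^{k+1}) \le f(x^k) + Q_k(d^k)$, and observe that by construction the GS-q rule~\eqref{eq:GS-q} with the step~\eqref{eq:deltaBound} selects the $d^k$ minimizing $Q_k$ over all feasible two-coordinate directions (those supported on a pair $\{i,j\}$ with $d_i + d_j = 0$ and $x^k + d \in [l,u]$).

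Next I would let $d^* = y^* - x^k$ be the minimizer defining $\mathcal{D}(x^k, L_2)$, so that $Q_k(d^*) = -\mathcal{D}(x^k, L_2)/(2L_2) < 0$, with $d^*$ lying in the subspace $S = \{d : \innerprod{d}{1} = 0\}$ and respecting the box. Applying \cref{lemma:non_elementary_realizations} with $\tau = 2$ (the minimal support of a nonzero vector of $S$), I would decompose $d^* = \sum_{r=1}^{m} d_r$ into $m \le n-1$ vectors, each conformal to $d^*$ and of support two, hence of the admissible form $(d_r)_i = -(d_r)_j$. The key estimate is a superadditivity bound on the model: since each $d_r$ is conformal to $d^*$, the entries share signs coordinate-wise, giving $\norm{d^*}_2^2 \ge \sum_r \norm{d_r}_2^2$, while linearity gives $\innerprod{\nabla f(x^k)}{d^*} = \sum_r \innerprod{\nabla f(x^k)}{d_r}$. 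Together these yield $\sum_r Q_k(d_r) \le Q_k(d^*)$, so the best piece satisfies $\min_r Q_k(d_r) \le \frac{1}{m} Q_k(d^*) \le \frac{1}{n-1} Q_k(d^*)$ (using $m \le n-1$ and $Q_k(d^*) < 0$). Conformality also guarantees each $x^k + d_r$ is feasible, because each coordinate of $d_r$ agrees in sign with, and is no larger in magnitude than, the corresponding coordinate of the feasible step $d^*$; hence every $d_r$ is an admissible candidate for the GS-q minimization.

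Chaining these bounds gives $Q_k(d^k) \le \min_r Q_k(d_r) \le \frac{1}{n-1} Q_k(d^*) = -\frac{1}{n-1}\cdot\frac{\mathcal{D}(x^k, L_2)}{2L_2}$. Substituting into the descent bound and invoking the proximal-PL inequality $\frac{1}{2}\mathcal{D}(x^k, L_2) \ge \mu_2 (f(x^k) - f^*)$ yields $f(x^{k+1}) - f^* \le \parens{1 - \frac{\mu_2}{L_2(n-1)}}(f(x^k) - f^*)$, and iterating over $k$ gives the claimed rate. The general-block-size version announced in the text follows by the same argument, taking $\tau$ equal to the block size in \cref{lemma:non_elementary_realizations}: the decomposition then produces $m \le n - \tau + 1$ conformal pieces of support at most $\tau$ and the factor $1/(n-\tau+1)$, which specializes to $1/(n-1)$ at $\tau = 2$.

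The hard part will be the superadditive decomposition of the quadratic model across the conformal pieces together with certifying their feasibility. \Cref{lemma:non_elementary_realizations} supplies the decomposition and, crucially, the count $m \le n-1$; the remaining care is in exploiting the sign agreement of conformal vectors both to obtain $\norm{d^*}_2^2 \ge \sum_r \norm{d_r}_2^2$ and to ensure that each partial step stays inside the box so that it competes in the GS-q selection. Once these are in place, the rest is the standard proximal-PL telescoping.
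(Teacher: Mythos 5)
Your proposal is correct and follows essentially the same route as the paper's proof of Proposition~\ref{prop:gs-q-general-rate}: descent lemma, conformal realization of the full proximal step $d^*$ via Lemma~\ref{lemma:non_elementary_realizations} into at most $n-1$ two-coordinate pieces, superadditivity of the quadratic model over conformal pieces, the min-versus-average bound, and the proximal-PL inequality. The only (harmless) difference is that where the paper invokes the separable-function inequality (Lemma~\ref{lemma:separable_inequality}) to handle $h$, you verify directly that each conformal piece keeps the iterate inside the box, which is an equivalent specialization to the indicator case.
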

We provide the proof in \cref{proof:convergence_box}.
The proof instantiates a more general result which holds for arbitrary
functions \( h \) and larger blocks sizes.

\subsection{Proof of Lemma~\ref{lemma:non_elementary_realizations}}
\label{appendix:conformal_realizations}
\begin{proof}
	The proof extends \citet[Proposition~9.22]{bertsekas1998network}. Consider $x \in S$. If $\supp(x) = \tau$, then let $d_1 = x$ and we are done.
	Otherwise, by Lemma \ref{lemma:elementary_existence} there exists an elementary vector $d_1 \in S$ that is conformal to $x$. Let
	\begin{align*}
		\gamma & = \max \bigg\{ \gamma \ \biggmid \
		[x]_j - \gamma [d_1]_j \geq 0 \quad \forall j \text{ with } [x]_j > 0 \quad \text{and} \\
		       & \hspace{4cm}
		[x]_j - \gamma [d_1]_j \leq 0 \quad \forall j \text{ with } [x]_j < 0. \bigg\}.        \\
	\end{align*}
	The vector $\gamma d_1$ is conformal to $x$. Let $\bar x = x - \gamma d_1$.  If $\supp(x_1) \leq \tau$, choose $d_2 = \bar x$ and we are done. Note that $d_2 \in S$ since $S$ is closed under subtraction. Otherwise, let $x = \bar x$ and repeat the process. Let $s$ be the number of times this process is conducted. Each iteration reduces the number of non-zero coordinates of $x$ by at least one. Since it terminates when $\supp(x) = \tau$, we have $s \leq n - \tau$.\\
\end{proof}

\subsection{Proof of Theorem~\ref{theorem:convergence_box}}\label{proof:convergence_box}

We prove the result by instantiating a more general convergence theorem
for optimization with linear constraints \( A x = c \),
where \( A \in \R^{m \times n} \), and general non-smooth regularizers
\( h \).
We assume \( A \) is full row-rank and that the proximal operator for \( h \)
is easily computed.
Note that, in this setting, block coordinate descent must operates on blocks
\( b_i \subset [n] \) of size \( m + 1 \leq \tau \leq n \) in order to maintain
feasibility of the iterates.
Let \( U_{b_i}(d_{b_i}) \) map block update vector \( d_{b_i} \)
from \( \R^{\tau} \) to \( \R^n \) by augmenting it with zeros
and define
\[
	S_{b_i} = \cbr{ d_{b_i} : A U_{b_i}(d_{b_i}) = 0}.
\]
That is, \( S_{b_i} \) is the null space of \( A \)  overlapping with block
\( b_i \).

As mentioned before, the notions of conformal and elementary vectors
introduced in the previous section provide necessary tools for our
convergence proof.
The following Lemmas provide the main show the utility of these definitions
for optimization.

\begin{lemma}[{\citet[Lemma~2]{necoara2014random}}]
	\label{lemma:support_of_elementary_vectors}
	Given $d \in \Null(A)$, if $d$ is an elementary vector of $\Null(A)$, then
	\[ \supp(d) \leq \text{rank}(A) + 1. \]
\end{lemma}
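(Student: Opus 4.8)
The plan is to argue by contradiction, combining the minimality built into the definition of an elementary vector with a rank/nullity count on the columns of $A$ selected by the support of $d$. Write $r = \text{rank}(A)$, let $S = \cbr{i : d_i \neq 0}$ be the support of $d$, and suppose for contradiction that $|S| \geq r + 2$. Let $A_S$ denote the submatrix of $A$ with columns indexed by $S$, and let $d_S \in \R^{|S|}$ be the restriction of $d$ to these coordinates; by construction $d_S$ has all nonzero entries, and since $d \in \Null(A)$ we have $d_S \in \Null(A_S)$. Because the columns of $A_S$ form a subset of the columns of $A$, we have $\text{rank}(A_S) \leq r$, so $\dim \Null(A_S) = |S| - \text{rank}(A_S) \geq |S| - r \geq 2$. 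In particular there exists a vector $v \in \Null(A_S)$ that is not a scalar multiple of $d_S$.

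The core step is to perturb $d_S$ along $v$ so as to zero out a coordinate while preserving conformality. I would study $w(t) = d_S - t v$, which remains in $\Null(A_S)$ for every $t$ and is nonzero for every $t$ (otherwise $d_S$ would be parallel to $v$). For each $i \in S$ with $v_i \neq 0$, the scalar $t_i = d_i/v_i$ is the unique value of $t$ that sends the $i$-th coordinate of $w(t)$ to zero; each such $t_i$ is nonzero since $d_i \neq 0$, and at least one exists because $v$ is nonzero on $S$. I would then choose $t^\star$ to be the smallest positive $t_i$ if any $t_i$ are positive, and otherwise the negative $t_i$ closest to zero. This choice guarantees that $t^\star$ is the first zero-crossing as $t$ moves from $0$ in its direction, so that $\text{sgn}(w_i(t^\star)) \in \cbr{0, \text{sgn}(d_i)}$ for every $i \in S$, while at least one coordinate of $w(t^\star)$ becomes exactly zero.

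Extending $w(t^\star)$ by zeros on the coordinates outside $S$ produces a nonzero element of $\Null(A)$ that is conformal to $d$ (its support lies in $S$ and its surviving entries share the sign of the corresponding entries of $d$) but has strictly smaller support. This contradicts the assumption that $d$ is an elementary vector of $\Null(A)$, and hence forces $|S| \leq r + 1$, which is the claimed bound $\supp(d) \leq \text{rank}(A) + 1$.

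The main obstacle is the conformality bookkeeping in the perturbation step: one must verify carefully that the chosen $t^\star$ leaves the sign of every surviving coordinate unchanged, which amounts to checking that coordinates whose $t_i$ lie beyond $t^\star$ (or on the opposite side of $0$) do not flip sign, together with the case split on whether positive or negative $t_i$ exist. The degenerate situations also deserve a remark — that $v$ restricted to $S$ is genuinely nonzero (ensured by $v \in \Null(A_S) \setminus \text{span}(d_S)$), so that at least one $t_i$ is well defined. The remaining ingredients, namely the dimension count and the final contradiction, are routine.
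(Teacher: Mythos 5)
Your proof is correct. Note, however, that the paper does not actually prove this lemma: it is stated as an imported result, cited to Necoara and Patrascu (Lemma~2 of \texttt{necoara2014random}), and no argument is given in the text. So there is nothing to match your proof against; what you have written is a self-contained replacement for the citation, and it is the standard one (it is essentially the classical argument for elementary vectors of a subspace, as in Rockafellar and in Bertsekas's network-optimization book, specialized to $\Null(A)$). The two key steps --- the rank--nullity count showing $\dim \Null(A_S) \geq 2$ once $|S| \geq \mathrm{rank}(A)+2$, hence the existence of $v \in \Null(A_S)\setminus \mathrm{span}(d_S)$, and the first-zero-crossing choice of $t^\star$ along $w(t) = d_S - tv$ so that every surviving coordinate keeps the sign of $d$ while at least one coordinate is annihilated --- are both handled correctly, including the check that $w(t^\star)\neq 0$ and the case split on the signs of the $t_i$. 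One pedantic remark: the paper's stated definition of an elementary vector does not explicitly exclude $d'=0$, under which no nonzero vector would be elementary; the intended definition requires $d'$ nonzero, and your construction produces a nonzero conformal $d'$ with strictly smaller support, so your contradiction is with the correct (intended) definition.
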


\begin{lemma}[{\citet[Proposition~9.22]{bertsekas1998network}}]\label{lemma:elementary_existence}
	Let \( S \) be a subspace of $\R^n$.
	Then vector $d \in S$ is either a elementary vector of $S$, or there
	exists an elementary vector $d' \in S$ that is conformal to $d$.\\
\end{lemma}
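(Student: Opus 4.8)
The plan is to prove the statement by a minimal-support (well-ordering) argument, whose only real ingredient is that conformality is a transitive relation. Throughout I would work with nonzero vectors, since elementary vectors are nonzero and — as I note at the end — the zero vector is vacuously conformal to everything and must be excluded for the notion of ``strictly smaller support'' to be meaningful. Fix a nonzero $d \in S$ and consider the collection
\[
\mathcal{C} = \{ d' \in S \setminus \{0\} : d' \text{ is conformal to } d \}.
\]
This set is nonempty since $d \in \mathcal{C}$, and $\supp(d') \geq 1$ for every member. Hence $\{\supp(d') : d' \in \mathcal{C}\}$ is a nonempty set of positive integers and attains a minimum; let $d^\star \in \mathcal{C}$ achieve it. The claim will be that $d^\star$ is the elementary vector we seek.

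First I would record the key structural fact that conformality is transitive: if $d''$ is conformal to $d'$ and $d'$ is conformal to $d$, then $d''$ is conformal to $d$. The support inclusion $\{i : d''_i \neq 0\} \subseteq \{i : d'_i \neq 0\} \subseteq \{i : d_i \neq 0\}$ is immediate. For the sign condition, on any coordinate $i$ with $d''_i \neq 0$ we have $i \in \{i : d''_i \neq 0\} \subseteq \{i : d'_i \neq 0\} \subseteq \{i : d_i \neq 0\}$, so $d'_i \neq 0$ and $d_i \neq 0$; then $d'_i d''_i \geq 0$ forces $\operatorname{sgn}(d''_i) = \operatorname{sgn}(d'_i)$, and $d_i d'_i \geq 0$ forces $\operatorname{sgn}(d'_i) = \operatorname{sgn}(d_i)$, whence $d_i d''_i > 0$. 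On coordinates with $d''_i = 0$ the condition $d_i d''_i \ge 0$ holds trivially, giving transitivity.

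With transitivity in hand the claim follows by contradiction. If $d^\star$ were not elementary, there would exist a nonzero $d'' \in S$ conformal to $d^\star$ with $\supp(d'') < \supp(d^\star)$. By transitivity $d''$ would be conformal to $d$, so $d'' \in \mathcal{C}$, contradicting the minimality of $\supp(d^\star)$. Hence $d^\star$ is elementary and conformal to $d$. To finish I would split on whether $d^\star = d$: if so, $d$ is itself elementary (the first alternative); otherwise $d^\star$ is the required elementary vector conformal to $d$ (the second alternative). An equivalent packaging is the iterative ``conformal descent'' used in the proof of Lemma~\ref{lemma:non_elementary_realizations}: repeatedly replace a non-elementary vector by a strictly-smaller-support conformal vector, which terminates because supports are positive integers; the minimal-support formulation simply shortcuts the termination bookkeeping.

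The one step that needs care is the transitivity of conformality, specifically the sign accounting on the shared support: the support-inclusion half is trivial, but one must check that a coordinate active in $d''$ is active with matching sign in both $d'$ and $d$, so that the product $d_i d''_i$ is nonnegative. Everything else is elementary once the nonzero convention is fixed, so I expect no genuine obstacle beyond this bookkeeping.
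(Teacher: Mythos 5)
Your proof is correct. The paper does not actually prove this lemma --- it is imported verbatim from \citet[Proposition~9.22]{bertsekas1998network} --- so there is no in-paper argument to compare against; your minimal-support argument via transitivity of conformality is a valid, self-contained proof under the paper's definitions, and it is essentially the same ``conformal descent'' that the paper does carry out explicitly in its proof of Lemma~\ref{lemma:non_elementary_realizations} (where this lemma is invoked at each step), just with the termination argument replaced by well-ordering, as you note. The transitivity check, which you correctly identify as the only step needing care, is done properly: support inclusion chains, and on the support of $d''$ both intermediate signs are pinned down. One remark worth keeping in mind: your contradiction step uses that a witness to the non-elementarity of $d^\star$ is conformal to $d^\star$ (so that transitivity puts it back into $\mathcal{C}$); this is exactly what the paper's definition of an elementary vector supplies, since it restricts competitors to conformal vectors of strictly smaller support. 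Under the classical Rockafellar--Bertsekas definition (no nonzero vector of $S$ with strictly smaller support, conformal or not) the witness need not be conformal and the argument would need an extra sign-adjustment step, but as the lemma is stated with the paper's definition your proof is complete. Your handling of the nonzero convention (excluding $d=0$, for which the statement is vacuous or false depending on convention) is also the right call and consistent with how the lemma is used downstream.
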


\begin{lemma}[{\citet[Lemma~6.1]{tseng2009block}}]
	\label{lemma:separable_inequality}
	Let $h$ be a coordinate-wise separable and convex function.
	For any $x$, $x + d \in \text{dom}(h)$, let $d$ be expressed as
	$d = d_1 + \dots + d_s$ for some $s \geq 1$ and some
	non-zero $d_t \in \R^n$ conformal to $d$ for $t = 1, \dots, s$. Then
	\[ h(x + d) - h(x) \geq \sum_{t=1}^s \parens{h \parens{x + d_t} - h(x)}. \]
\end{lemma}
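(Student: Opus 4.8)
The plan is to exploit the separable structure of $h$ to reduce the claimed vector inequality to a collection of independent one-dimensional inequalities, one per coordinate, and then prove each of these using convexity together with the sign structure imposed by conformality. Write $h(x) = \sum_{i=1}^n h_i(x_i)$ with each $h_i : \R \to \R \cup \cbr{+\infty}$ convex. Then both sides of the claim decompose coordinatewise: the left side is $\sum_i \parens{h_i(x_i + d_i) - h_i(x_i)}$, and after swapping the order of summation the right side is $\sum_i \sum_{t=1}^s \parens{h_i(x_i + (d_t)_i) - h_i(x_i)}$. Hence it suffices to establish, for each fixed coordinate $i$,
\[
h_i(x_i + d_i) - h_i(x_i) \;\geq\; \sum_{t=1}^s \parens{h_i(x_i + (d_t)_i) - h_i(x_i)}.
\]

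Next I would extract the convex-combination structure from conformality. If $d_i = 0$, then support containment forces $(d_t)_i = 0$ for every $t$, so both sides of the per-coordinate inequality vanish and there is nothing to prove. If $d_i \neq 0$, then the sign condition $(d_t)_i d_i \geq 0$ lets me set $\theta_t \defeq (d_t)_i / d_i \geq 0$; summing the identity $d = \sum_t d_t$ in coordinate $i$ gives $\sum_t \theta_t = 1$, so in particular each $\theta_t \in [0,1]$ and $(d_t)_i = \theta_t d_i$.

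The core step is a one-dimensional convexity estimate. I would define $\phi(\theta) \defeq h_i(x_i + \theta d_i) - h_i(x_i)$, which is convex in $\theta$ (an affine reparametrization of the convex $h_i$) and satisfies $\phi(0) = 0$. For $\theta \in [0,1]$, convexity applied to $\theta = (1-\theta)\cdot 0 + \theta \cdot 1$ gives the chord bound $\phi(\theta) \leq (1-\theta)\phi(0) + \theta \phi(1) = \theta \phi(1)$. Applying this with $\theta = \theta_t$ and summing over $t$ yields $\sum_t \phi(\theta_t) \leq \parens{\sum_t \theta_t}\phi(1) = \phi(1)$, which is exactly the desired per-coordinate inequality since $\phi(\theta_t) = h_i(x_i + (d_t)_i) - h_i(x_i)$ and $\phi(1) = h_i(x_i + d_i) - h_i(x_i)$. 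Summing over $i$ then recovers the full statement.

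There is no deep obstacle here; the only points requiring care are bookkeeping ones. I must check that each term is well defined, i.e. $x + d_t \in \text{dom}(h)$, so that the reduction is legitimate; this holds because $x_i + (d_t)_i = x_i + \theta_t d_i$ lies on the segment between the feasible points $x_i$ and $x_i + d_i$ and $\text{dom}(h_i)$ is an interval. The other subtlety is ensuring the sign hypothesis of conformality is invoked exactly where needed, namely to guarantee $\theta_t \geq 0$; without it the coefficients would not form a convex combination and the chord bound would fail.
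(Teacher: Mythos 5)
Your proof is correct. The paper does not supply its own argument for this lemma --- it is imported verbatim as Lemma~6.1 of \citet{tseng2009block} --- so there is nothing internal to compare against, but your reduction to a per-coordinate statement via separability, followed by the observation that conformality makes the ratios $\theta_t = (d_t)_i/d_i$ a convex combination of $0$ and $1$ and the chord bound $\phi(\theta_t) \le \theta_t \phi(1)$ for the convex function $\phi(\theta) = h_i(x_i + \theta d_i) - h_i(x_i)$ with $\phi(0)=0$, is exactly the standard proof of that cited result, and your handling of the $d_i = 0$ case and of domain membership of $x + d_t$ is careful and complete.
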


We are now ready to prove our general convergence result for block-coordinate
descent with linear constraints and the GS-q block selection rule.
We emphasize that in the following theorem:
(i) \( h \) need not be the indicator for box constraints;
(ii) \( A \) many consist of many coupling constraints; and
(iii) the convergence rate improves with block-size \( \tau \), unlike many similar
results.

\begin{proposition}\label{prop:gs-q-general-rate}
	Fix block size \( \tau \geq m + 1 \) and let \( \calB \) be the set of
	all blocks \( b_i \subset [n] \) of size \( \tau \).
	Consider solving the linearly constrained problem
	\begin{align*}
		\min_{x\in\R^n} F(x)   & := f(x) + h(x), \\
		\text{subject to     } & A x  = c
	\end{align*}
	where the gradient of \( f \) is \( \tau \)-coordinate
	Lipschitz with constant \( L_2 \) and \( h \) is convex and coordinate-wise
	separable.
	Suppose \( F \) satisfies the proximal-PL inequality in the \( 2 \)-norm
	with constant \( \mu_2 \).
	Then the block-coordinate descent method with blocks given by the GS-q
	rule converges as
	\[
		F(x^k) - F^* \leq \rbr{1 - \frac{\mu_2}{L_2(n - \tau + 1)}}^k \rbr{F(x^0) - F^*}.
	\]
\end{proposition}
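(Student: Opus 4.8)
The plan is to establish a per-iteration progress bound for the GS-q block update and then combine it with the proximal-PL inequality to obtain linear convergence. The core of the argument is to lower-bound the decrease $F(x^k) - F(x^{k+1})$ achieved by the best block (chosen by GS-q) in terms of the full-coordinate proximal-PL quantity $\mathcal{D}(x^k, L_2)$. Since GS-q selects the block minimizing a quadratic upper-bound model, the progress it makes is at least the progress of any single block, and in particular at least the \emph{average} progress over a suitable collection of blocks. The key technical device for averaging is the conformal realization in \cref{lemma:non_elementary_realizations}.

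\textbf{Step 1: Progress of a single block.} For a fixed block $b_i$ of size $\tau$, the GS-q rule minimizes the model $\langle \nabla_{b_i} f(x), d_{b_i}\rangle + \frac{L_2}{2}\|d_{b_i}\|_2^2 + h(x + U_{b_i}(d_{b_i})) - h(x)$ over $d_{b_i} \in S_{b_i}$, and by $\tau$-coordinate Lipschitz smoothness this model upper-bounds the true decrease, so $F(x^{k+1}) - F(x^k) \leq \min_{d_{b_i} \in S_{b_i}} \{ \text{model} \}$ for the selected block. Because GS-q picks the block giving the smallest model value, this bound holds with the minimum taken over all blocks $b_i \in \calB$ as well.

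\textbf{Step 2: Averaging via conformal realizations.} The main obstacle — and the crux of the proof — is relating the \emph{best-block} progress to the \emph{full-space} minimization that appears in $\mathcal{D}(x^k, L_2)$. I would take the full-space minimizer $y^*$ of the proximal-PL model, write $d = y^* - x^k \in \Null(A)$, and apply \cref{lemma:non_elementary_realizations} to decompose $d = d_1 + \cdots + d_s + d_{s+1}$ into conformal pieces, each of support at most $\tau$, with $s + 1 \leq n - \tau + 1$. Each $d_t$ lies in some $S_{b_i}$, so it is a feasible candidate for one of the block subproblems; hence the best-block model value is at most the model value of any single $d_t$. Using \cref{lemma:separable_inequality} to split the separable term $h$ across the conformal pieces and using that the quadratic and linear terms in $d$ decompose additively over conformal (disjoint-sign, nested-support) vectors, the sum of the per-piece models telescopes to the full model evaluated at $d$. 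Dividing by the number of pieces $n - \tau + 1$ gives
\[
F(x^{k+1}) - F(x^k) \leq \frac{1}{n - \tau + 1}\min_{Ay = c}\left\{ \langle \nabla f(x^k), y - x^k\rangle + \frac{L_2}{2}\|y - x^k\|_2^2 + h(y) - h(x^k)\right\}.
\]

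\textbf{Step 3: Apply proximal-PL and recurse.} The right-hand side is exactly $-\frac{1}{2L_2(n-\tau+1)}\mathcal{D}(x^k, L_2)$, and the proximal-PL inequality in the $2$-norm gives $\frac{1}{2}\mathcal{D}(x^k, L_2) \geq \mu_2 (F(x^k) - F^*)$. Substituting yields $F(x^{k+1}) - F^* \leq (1 - \frac{\mu_2}{L_2(n-\tau+1)})(F(x^k) - F^*)$, and unrolling the recursion over $k$ completes the proof; specializing to $\tau = 2$ recovers \cref{theorem:convergence_box}. I expect the delicate point to be verifying that the quadratic term $\|d\|_2^2$ and the linear term split additively over the conformal decomposition: the additive split of $\|\cdot\|_2^2$ is not automatic for arbitrary vectors, but holds here because conformal elementary vectors of a common sign-pattern have compatible supports so that the cross terms in $\|\sum_t d_t\|_2^2$ are non-negative, giving the needed inequality direction $\sum_t \|d_t\|_2^2 \leq \|\sum_t d_t\|_2^2$ together with the exact additivity of the linear term.
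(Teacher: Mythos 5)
Your proposal is correct and takes essentially the same route as the paper's proof: it bounds the GS-q block's progress by the average of the block-wise model minima over the blocks covering a conformal realization (via \cref{lemma:non_elementary_realizations}) of the full-space minimizer, recombines the pieces using the exact additivity of the linear term, the separable inequality for $h$, and the superadditivity of $\norm{\cdot}_2^2$ over conformal pieces, and then applies the proximal-PL inequality and recurses. The only cosmetic difference is that the paper handles the quadratic term by a second application of \cref{lemma:separable_inequality}, whereas you argue the non-negativity of the cross terms directly; the two are equivalent.
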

\begin{proof}
	Block-coordinate Lipschitz continuity of $\nabla f$ give the following version of the descent lemma:
	\begin{align*}
		f(x^{k+1})                          & \leq f(x^k) + \langle \nabla f(x^k), x^{k+1} - x_{k} \rangle + \frac{L_2}{2}\norm{x^{k+1} - x^k}_2^2                                                      \\
		\intertext{We have $x^{k+1} =  x^k + U_{b^k}(d^*_{b^k})$ by definition of the update rule. Substituting this into the descent lemma gives}
		f(x^{k+1})                          & \leq f(x^k) + \langle \nabla_{b^k} f(x^k), d^*_{b^k} \rangle + \frac{L_2}{2}\norm{d^*_{b_i^k}}_2^2                                                        \\
		\Rightarrow f(x^{k+1}) + h(x^{k+1}) & \leq f(x^k) + \langle \nabla_{b^k} f(x^k), d_{b^k} \rangle + \frac{L_2}{2}\norm{d^*_{b_i^k}}_2^2 + h(x^{k+1}) + h(x^k) - h(x^k)                           \\
		\Rightarrow  F(x^{k+1})             & \leq F(x^k) + \langle \nabla_{b^k} f(x^k), d^*_{b^k} \rangle + \frac{L_2}{2}\norm{d^*_{b_i^k}}_2^2 + h_{b^k}(x_{b^k}^k + d^*_{b^k}) - h_{b^k}(x_{b^k}^k).
	\end{align*}
	Substituting in the choice of coordinate block $b^k$ according to the GS-q rule and the definition of $d^*_{b^k}$ gives
	\begin{align*}
		F(x^{k+1}) & \leq F(x^k) + \min_{b_i \in B} \bigg\{  \min_{d_{b_i} \in S_{b_i}} \big\{ \langle \nabla_{b_i} f(x^k), d_{b_i} \rangle + \frac{L_2}{2}\norm{d^*_{b_i^k}}_2^2 \\
		           & \hspace{5cm} + h_{b_i}(x^k_{b_i} + d_{b_i}) - h_{b_i}(x^k_{b_i}) \big\}\bigg\}.
	\end{align*}
	For clarity, we define the quadratic upper bound to be the function
    \begin{equation}
	\begin{aligned}
		V(x^k, d_{b_i}) & = \langle \nabla_{b_i} f(x^k), d_{b_i} \rangle + \frac{L_2}{2}\norm{d^*_{b_i^k}}_2^2 + h_{b_i}(x^k_{b_i} + d_{b_i}) - h_{b_i}(x^k_{b_i}), \\
		\text{which gives,}\\
		F(x^{k+1})      & \leq F(x^k) + \min_{b_i \in B} \curly{  \min_{d_{b_i} \in S_{b_i}} \curly{V(x^k, d_{b_i})}}. \label{eq:greedy_step}
	\end{aligned}
 \end{equation}
	We must control that the right-hand-side of \eqref{eq:greedy_step} in terms of the full-coordinate minimizer
	\[
		d^* = \argmin{d \in \Null(A)} \curly{ \langle \nabla f(x^k), d \rangle + \frac{L_2}{2}\norm{d}_2^2 + h(x^k + d) - h(x^k) }.
	\]
	in order to apply the prox-PL inequality. We briefly digress and consider conformal realizations of $d^*$ in order to do so.\\

	By lemma \ref{lemma:non_elementary_realizations}, $d^*$ has a conformal realization
	\[ d^* = d_1^* + \dots + d_r^* + d_{r+1}^*,  \]
	where $r \leq n - \tau$ and $d_1^*, \dots d_r^*$ are elementary vectors of $\Null(A)$ and $d_{r+1}^* \in \Null(A)$.
	Lemma \ref{lemma:support_of_elementary_vectors} gives $\text{supp}(d_l^*) \leq m+1$; therefore there exists $ b_i \in B$ such that $d_l^* \in S_{b_i}$ for all $l = 1, \dots, r$.
	By construction, $\supp(d_{r+1}^*) = \tau$ and so there also exists $ b_i \in B $ such that $d_{r+1}^* \in S_{b_i}$.
	Let $\bar B \subseteq B$ be the smallest set of blocks such that
	\[  \forall \ l \in \curly{1, \dots, r+1 }, \  \exists b_i \in \bar B, \quad  d_l^* \in S_{b_i}, \]
	and observe that $|\bar B| \leq n - 1$.\\

	Returning to \eqref{eq:greedy_step}, we can use the fact that the value of $V(x^k, d_{j})$ obtained at the minimizing block $b^{k} \in B$ is less than or equal to the average over the subset of blocks $\bar B$:
	\begin{align}
		\label{eq:min_inequality}
		\min_{b_i \in B} \curly{  \min_{d_{b_i} \in S_{b_i}} \curly{ V(x^k, d_{b_i}) }} & \leq \frac{1}{|\bar B|} \sum_{b_i \in \bar B} \min_{d_{b_i} \in S_{b_i}} \curly{ V(x^k, d_{b_i}) }.
	\end{align}
	Combining this result with \eqref{eq:greedy_step} and \eqref{eq:min_inequality}, we obtain
 \begin{equation}
	\begin{aligned}
		F(x^{k+1}) & \leq F(x^k) + \frac{1}{|\bar B|} \sum_{b_i \in \bar B} \min_{d_{b_i} \in S_{b_i}} \curly{ V(x^k, d_{b_i}) }                                                                                                  \\
		           & = F(x^k) + \frac{1}{|\bar B|} \min_{d_{b_i} \in S_{b_i}, \forall b_i \in \bar B } \curly{ \sum_{b_i \in \bar B} V(x^k, d_{b_i}) }                                 \\
		           & = F(x^k) + \frac{1}{|\bar B|} \min_{d_{b_i} \in S_{b_i}, \forall b_i \in \bar B } \bigg\{ \langle \nabla f(x^k), \sum_{b_i \in \bar B} d_{b_i} \rangle \\
             &\hspace{1cm}+ \sum_{b_i \in \bar B} \frac{L_2}{2}\norm{d_{b_i}}^2 + \sum_{b_i \in \bar B} \parens{h_{b_i}(x^k_{b_i} + d_{b_i}) - h_{b_i}(x^k_{b_i}) } \bigg\}. \label{eq:current_upper_bound}  \\ 
             \end{aligned}
        \end{equation}
		For all $b_i \in \bar B$, substituting any $d_{b_i} \in S_{b_i}$ for the vector in $S_{b_i}$ that minimizes \eqref{eq:current_upper_bound} can only increase the upper bound. Choosing the $d^*_l$ corresponding to each block $b_i \in \bar B$ yields
  \begin{align*}
		        F(x^{k+1})   & \leq F(x^k) + \frac{1}{|\bar B|} \bigg( \langle \nabla f(x^k), \sum_{l=1}^{r+1} d_l^* \rangle + \sum_{l=1}^{r+1} \frac{L_2}{2}\norm{d_l^*}^2                                                              \\
		           & \hspace{5cm} + \sum_{l=1}^{r+1} \parens{h_{b_i}(x^k_{b_i} + d_l^*) - h_{b_i}(x^k_{b_i}) }\bigg).
 \end{align*}
	We now use $d^* = \sum_{l=1}^{r+1} d_l^*$ and apply lemma \ref{lemma:separable_inequality} twice to obtain
 \begin{equation}
	\begin{aligned}
		F(x^{k+1}) & \leq F(x^k) + \frac{1}{|\bar B|} \bigg( \langle \nabla f(x^k), \sum_{l=1}^{r+1} d_l^* \rangle + \frac{L_2}{2} \norm{d^*}^2                                     \\
		           & \hspace{5cm} + \sum_{l=1}^{r+1} \parens{h_{b_i}(x^k_{b_i} + d_l^*) - h_{b_i}(x^k_{b_i}) }\bigg)                                                             \\
		F(x^{k+1}) & \leq F(x^k) + \frac{1}{|\bar B|} \curly{ \langle \nabla f(x^k), d^* \rangle + \frac{L_2}{2} \norm{d^*}^2 + h(x^k + d^*) - h(x^k)}                             \\
		           & = F(x^k) + \frac{1}{|\bar B|} \min_{d \in S} \curly{ \langle \nabla f(x^k), d \rangle + \frac{L_2}{2} \norm{d}_2^2 + h(x^k + d) - h(x^k)}. \label{eq:pl_ineq}
	\end{aligned}
 \end{equation}
	Applying the prox-PL inequality in the $\norm{\cdot}_2$ norm gives
	\begin{align*}
		F(x^{k+1}) & \leq F(x^k) - \frac{\mu_2}{|\bar B|} (F(x^k) - F^*)       \\
		           & = F(x^k) - \frac{\mu_2}{L_2(n -  \tau + 1)} (F(x^k) - F^*).
	\end{align*}
	Subtracting $F^*$ from both sides and applying the inequality recursively completes the proof.
\end{proof}

Instantiating \cref{prop:gs-q-general-rate} with \( A = 1^\top \),
\( c = \gamma \), \( \tau = 2 \) and
\begin{align*}
	h(x) =  \begin{cases}
		        0      & \mbox{if $l_i \leq  x_i \leq u_i$ for all $i \in \{1, \dots, n\}$} \\
		        \infty & \mbox{otherwise}
	        \end{cases}
\end{align*}
is sufficient to obtain \cref{theorem:convergence_box}.

\section{Greedy Rules Depending on Coordinate-Wise Constants}
\label{app:Li}

We first derive the greedy GS-q rule, then steepest descent in the L-norm, and then give a dimension-independent convergence rate based on the L-norm.

\subsection{GS-q Rule with Coordinate-Wise Constants}

The GS-q rule under an equality constraint and coordinate-wise Lipschitz constants is given by
\begin{equation}
	\argmin\limits_b\left\{ \min_{d_b|d_i +d_j = 0} \langle \nabla_b f(x),d_b \rangle + \frac{L_i}{2}d_i^2+\frac{L_j}{2}d_j^2\right\}.
	\label{eq:Li GSQ}
\end{equation}
\paragraph{Solving for $d_b$.} We first fix $b$ and solve for $d_b$.
The Lagrangian of the inner minimization in~\eqref{eq:Li GSQ}is:
\begin{equation*}
	\mathcal{L}(d,\lambda) = \innerprod{\nabla_b f(x)}{d_b} +\frac{L_i}{2}d_i^2+\frac{L_j}{2}d_j^2+\lambda(d_i+d_j).
\end{equation*}
Set the gradient with respect $d_i$ to zero we get
\[
	\nabla_if(x)+L_id_i+\lambda =0,
\]
and solving for $d_i$ gives
\begin{align}
	d_i= \frac{-\nabla_i f(x)-\lambda}{L_i}. \label{eq:dilambda}
\end{align}
Similarly, we have
\[
	d_j= \frac{-\nabla_j f(x)-\lambda}{L_j}.
\]
Since $d_i = -d_j$ we have
\begin{equation*}
	\frac{-\nabla_i f(x)-\lambda}{L_i} = \frac{\nabla_j f(x)+\lambda}{L_j},
\end{equation*}
and solving for $\lambda$ gives
\begin{equation}
	\lambda =  \frac{-(L_j \nabla_i f(x) + L_i \nabla_j f(x))}{L_i+L_j}. \label{eq:dijlambda}
\end{equation}
Substituting \eqref{eq:dijlambda} in \eqref{eq:dilambda} gives
\begin{align*}
	d_i & = \frac{1}{L_i}\left(-\nabla_i f(x) - \frac{-(L_j \nabla_i f(x) + L_i \nabla_j f(x))}{L_i+L_j}\right)                   \\
	    & = \frac{1}{L_i}\left(\frac{-L_i\nabla_i f(x) - L_j\nabla_i f(x) + L_j\nabla_if(x) + L_i\nabla_j f(x)}{L_i + L_j}\right) \\
	    & = \frac{1}{L_i}\left(\frac{-L_i\nabla_i f(x) + L_i\nabla_j f(x)}{L_i + L_j}\right)                                      \\
	    & = -\frac{\nabla_i f(x) - \nabla_jf(x)}{L_i + L_j},
\end{align*}
and similarly
\[
	d_j = \frac{\nabla_i f(x) - \nabla_jf(x)}{L_i + L_j}.
\]
\paragraph{Solving for $b$.} We now use the optimal $d_i$ and $d_j$ in~\eqref{eq:Li GSQ},
\begin{align*}
	       & \argmin_b\left\{\nabla_if(x)d_i + \nabla_jf(x)d_j + \frac{L_i}{2}d_i^2 + \frac{L_j}{2}d_j^2\right\}                       \\
	\equiv & \argmin_b\left\{\nabla_if(x)d_i - \nabla_jf(x)d_i + \frac{L_i}{2}d_i^2 + \frac{L_j}{2}d_i^2\right\}                       \\
	\equiv & \argmin_b\left\{(\nabla_if(x)-\nabla_jf(x))d_i + \frac{L_i + L_j}{2}d_i^2\right\}                                         \\
	\equiv & \argmin_b\left\{-\frac{(\nabla_if(x)-\nabla_jf(x))^2}{L_i+L_j} + \frac{(\nabla_if(x)-\nabla_jf(x))^2}{2(L_i+L_j)}\right\} \\
	\equiv & \argmin_b\left\{-\half\frac{(\nabla_if(x)-\nabla_jf(x))^2}{L_i+L_j}\right\}                                               \\
	\equiv & \argmax_b\left\{\frac{(\nabla_if(x) - \nabla_jf(x))^2}{L_i+L_j}\right\}.
\end{align*}

\subsection{Steepest Descent with Coordinate-Wise Constants}
\label{sdL-norm}
Here, we show that steepest descent in the $L$-norm always admits at least one solution which updates only two coordinates. Steepest descent in the $L$-norm, subject to the equality constraint, takes steps in the direction $d$ that minimizes the following model of the objective:
\begin{equation}\label{eq:SDLnorm}
	d \in \argmin_{d \in \mathbb{R}^n | d^T 1 = 0} \left\{\nabla f(x)^T d+ \frac{1}{2\alpha} ||d||_L^2\right\},
\end{equation}
This is a convex optimization problem for which strong duality holds.
Introducing a dual variable $\lambda \in \R$, we obtain the Lagrangian
\begin{equation*}
	\mathcal{L}(d,\lambda) = \nabla f(x)^T d+ \frac{1}{2\alpha} ||d||_L^2 - \lambda(d^T 1).
\end{equation*}
The subdifferential with respect to $d$ and $\lambda$ yields necessary and sufficient optimality conditions for a steepest descent direction,
\begin{align*}
	\nabla_d \mathcal{L}(d,\lambda)         & = \nabla f(x) + \frac{1}{2\alpha} g - \lambda 1 = 0 \\ &\text{(for some subgradient $g \in \partial ||d||_L^2$)}\\
	\nabla_{\lambda} \mathcal{L}(d,\lambda) & = d^T 1 = 0.
\end{align*}
The second condition is simply feasibility of $d$, while from the first we obtain,
\begin{align}\label{eq:LiSDpartial}
	2\alpha(-\nabla f(x)+\lambda 1) \in \partial ||d||_L^2\nonumber \\
	\alpha(-\nabla f(x)+\lambda 1) \in ||d||_L (\sqrt{L} \odot \text{sgn}(d)),
\end{align}
where element $m$ of sgn$(d)$ is $1$ if $d_m$ is positive, $-1$ if $d_m$ is negative, and can be any value in $[-1,1]$ if $d_m$ is 0.
The following lemma shows that these conditions are always satisfied by a two-coordinate update.
\begin{lemma}\label{lemma:Lisd}
	Let $\alpha > 0$. Then at least one steepest descent direction with respect to the $1$-norm has exactly two non-zero coordinates.
	That is,
	\begin{align*}
		\min_{d \in \mathbb{R}^n | d^T 1 = 0} {\nabla f(x)^T d+ \frac{1}{2\alpha} ||d||_L^2}
		= \nonumber \\
		\min_{i,j} \cbr{\min_{d_{ij} \in \mathbb{R}^2 | d_{i} + d_j = 0} {\nabla_{ij} f(x)^T d_{ij}+ \frac{1}{2\alpha} ||d_{ij}||_L^2}}.
	\end{align*}
	\begin{proof}
		Similar to the steepest descent in the 1-norm, the proof follows by constructing a solution to the steepest descent problem in Eq.~\ref{eq:SDLnorm} which only has two non-zero entries. Let $i$ and $j$ maximize $(\nabla_i f(x) - \nabla_j f(x))/(\sqrt{L_i} + \sqrt{L_j})$.
		Our proposed solution is $d$ such that $d_i = -\delta, d_j = \delta$ for some $\delta \in \R$ and $d_{k,k \neq i,j}=0$. In order for this relationship in \eqref{eq:LiSDpartial} to hold, we would require
		\[
			-\alpha\nabla f(x)+\alpha\lambda 1 \in \|d\|_L (\sqrt{L}\odot sgn(d)).
		\]
		From the definition of L-norm and our definition of $d$ that
		\begin{align*}
			\|d\|_L & = \sqrt{L_i} \delta +\sqrt{L_j}\delta \\
			        & = \delta(\sqrt{L_i}+\sqrt{L_j}).
		\end{align*}
		Also, we know that sgn$(d_i)=-1$, sgn$(d_j)=1$, sgn$(d_k)=[-1,1]$. Therefore, we would need
		\begin{align}
			-\alpha\nabla_i f(x)+\alpha\lambda & = -\delta\sqrt{L_i}(\sqrt{L_i}+ \sqrt{L_j}) \label{eq:lnormcond1}     \\
			-\alpha\nabla_j f(x)+\alpha\lambda & = \delta\sqrt{L_j}(\sqrt{L_i}+ \sqrt{L_j})\label{eq:lnormcond2}       \\
			-\alpha\nabla_k f(x)+\alpha\lambda & = \delta\sqrt{L_k}(\sqrt{L_i}+ \sqrt{L_j})[-1,1]\label{eq:lnormcond3}
		\end{align}
		From \eqref{eq:lnormcond1}, $\lambda = \nabla_i f(x) -\frac{\delta}{\alpha}\sqrt{L_i}(\sqrt{L_i}+ \sqrt{L_j}) $. Substituting $\lambda$ in \eqref{eq:lnormcond2}, we get
		\begin{align*}
			-\alpha\nabla_j f(x)+ \alpha\nabla_i f(x)-
			\delta\sqrt{L_i}(\sqrt{L_i}+ \sqrt{L_j}) & =  \delta\sqrt{L_j}(\sqrt{L_i}+ \sqrt{L_j}) \\
			\alpha\nabla_i f(x) - \alpha\nabla_j f(x)            & =
			\delta(\sqrt{L_i}+ \sqrt{L_j})(\sqrt{L_i}+ \sqrt{L_j}),                                \\
		\end{align*}
		From this we get,
		\begin{equation}
			\delta = \frac{\alpha}{(\sqrt{L_i}+ \sqrt{L_j})^2}( \nabla_i f(x) - \nabla_j f(x)).
			\label{eq:L1iDelta}
		\end{equation}
		Using $\lambda$ in~\eqref{eq:lnormcond3} means that for variables $k\neq i$ and $k\neq j$ that we require
		\begin{align*}
			-\alpha\nabla_k f(x) + \alpha\nabla_if(x) - \delta\sqrt{L_i}(\sqrt{L_i}+\sqrt{L_j}) & \in \delta\sqrt{L_k}(\sqrt{L_i}+\sqrt{L_j})[-1,1]                \\
			-\alpha(\nabla_if(x) - \nabla_k f(x))                                         & \in \delta(\sqrt{L_i}+\sqrt{L_k})(\sqrt{L_i} + \sqrt{L_j})[-1,1] \\
			-\alpha\frac{\nabla_k f(x) - \nabla_if(x)}{(\sqrt{L_i}+\sqrt{L_k})}           & \in \delta(\sqrt{L_i} + \sqrt{L_j})[-1,1]
		\end{align*}
		Using the definition of $\delta$~\eqref{eq:L1iDelta} this is equivalent to
		\[
			-\frac{\nabla_i f(x) - \nabla_kf(x)}{\sqrt{L_i}+\sqrt{L_k}} \in \frac{\nabla_i f(x)-\nabla_jf(x)}{\sqrt{L_i} + \sqrt{L_j}}[-1,1],
		\]
		which holds due to the way we chose $i$ and $j$.

		We have shown that a two-coordinate update $d$ satisfies the sufficient conditions to be a steepest descent direction in the $L$-norm.
	\end{proof}
\end{lemma}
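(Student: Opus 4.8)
The problem~\eqref{eq:SDLnorm} is convex with an affine constraint, so strong duality holds and the KKT system (stationarity together with primal feasibility) is both necessary and sufficient for optimality. My plan is therefore to write the Lagrangian with a single scalar multiplier $\lambda$ for the constraint $d^T 1 = 0$, reduce stationarity to a per-coordinate subgradient inclusion, and then exhibit a feasible direction supported on only two coordinates that satisfies these conditions. One inequality is immediate: every two-coordinate feasible direction, extended by zeros, is also feasible for the full problem, so the right-hand minimum can never drop below the left-hand one, giving $\text{RHS} \geq \text{LHS}$ for free. Producing a two-coordinate \emph{optimal} point then forces equality.

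Concretely, using $\partial_m \|d\|_L^2 = 2\|d\|_L \sqrt{L_m}\,\mathrm{sgn}(d_m)$, the stationarity condition~\eqref{eq:LiSDpartial} becomes, coordinate by coordinate, $\alpha(-\nabla_m f(x) + \lambda) \in \|d\|_L\sqrt{L_m}[-1,1]$, with the sign pinned to $\pm 1$ wherever $d_m \neq 0$. I would select the ordered pair $(i,j)$ maximizing $(\nabla_i f(x) - \nabla_j f(x))/(\sqrt{L_i}+\sqrt{L_j})$ and propose $d_i = -\delta$, $d_j = \delta$, and $d_k = 0$ otherwise, which is automatically feasible and has support two. (If all partial derivatives coincide this ratio is zero, $d = 0$ is optimal, and both sides vanish, so assume otherwise.) The inclusions at $i$ and $j$ are then two scalar equations; eliminating $\lambda$ between them yields the explicit step $\delta = \frac{\alpha}{(\sqrt{L_i}+\sqrt{L_j})^2}(\nabla_i f(x) - \nabla_j f(x))$, exactly as in~\eqref{eq:L1iDelta}.

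The substantive step, and the one I expect to be the main obstacle, is verifying the inclusion at every remaining coordinate $k \notin \{i,j\}$. After substituting the values of $\lambda$ and $\delta$, this collapses to showing
\[
-\frac{\nabla_i f(x) - \nabla_k f(x)}{\sqrt{L_i}+\sqrt{L_k}} \in \frac{\nabla_i f(x) - \nabla_j f(x)}{\sqrt{L_i}+\sqrt{L_j}}\,[-1,1].
\]
Here the clean unweighted argument used for the $1$-norm (where $i,j$ are simply the largest and smallest partial derivatives, so $\nabla_k f(x)$ is a convex combination of the two) breaks down once the $\sqrt{L_m}$ weights enter the denominators, and a direct convexity estimate no longer suffices. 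The way I would close the gap is to read both endpoints of the interval off the maximality of the chosen ratio over \emph{ordered} pairs: the inequality toward one endpoint is precisely the statement that the pair $(i,k)$ does not beat $(i,j)$, and the inequality toward the other endpoint is the statement that the pair $(k,i)$ (the negated difference, with the same denominator) does not beat it either. Thus both bounds are instances of the single optimization that defines $(i,j)$, and no separate estimate on $\nabla_k f(x)$ is needed.

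With stationarity verified at $i$, at $j$, and at every other $k$, together with feasibility, the proposed two-coordinate $d$ meets the sufficient KKT conditions and is therefore a global minimizer of~\eqref{eq:SDLnorm}, so its objective value equals the left-hand side. Since $d$ is itself a feasible two-coordinate vector, that same value upper-bounds the right-hand minimum, giving $\text{RHS} \leq \text{LHS}$; combined with the trivial reverse inequality, the two sides are equal, which is the claim.
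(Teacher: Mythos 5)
Your proposal is correct and follows essentially the same route as the paper's proof: the same Lagrangian/subgradient stationarity conditions, the same choice of $(i,j)$ maximizing $(\nabla_i f(x)-\nabla_j f(x))/(\sqrt{L_i}+\sqrt{L_j})$, the same value of $\delta$, and the same reduction of the remaining inclusions at $k\notin\{i,j\}$ to the maximality of that ratio. Your explicit observation that both endpoints of the interval correspond to the ordered pairs $(i,k)$ and $(k,i)$ not beating $(i,j)$ is a welcome elaboration of the paper's terse ``which holds due to the way we chose $i$ and $j$,'' but it is the same argument.
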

%

\subsection {Convergence result for coordinate-wise Lipschitz case}
\label{proof:Liresult}

Lemma~\ref{lemma:Lisd} allows us to give a dimension-independent convergence rate of a greedy 2-coordinate method that incorporates the coordinate-wise Lipschitz constants, by relating
the progress of the 2-coordinate update to the progress made by a full-coordinate steepest descent step. If we use $L_L$ as the Lipschitz-smoothness constant in the $L$-norm, then by the descent lemma we have
\[
f(x^{k+1}) \leq f(x^k) + \nabla f(x^k)^Td^k + \frac{L_L}{2}\norm{d^k}_L^2.
\]
From Lemma~\ref{lemma:Lisd}, if we use the greedy two-coordinate update to set $d^k$ and use a step size of $\alpha=1/L_L$ we have
\[
f(x^{k+1}) \leq f(x^k) + \min_{d |d^T1=0}\left\{\nabla f(x^k)^Td + \frac{L_L}{2}\norm{d}_L^2\right\}.
\]
Now subtracting $f^*$ from both sides and the proximal-PL assumption in the $L$-norm,
\begin{align*}
f(x^{k+1}) - f(x^*) & \leq f(x^k) - f(x^*)- \frac{1}{2L_L}\mathcal{D}(x^k,L_L)\\
 & = f(x^k) - f(x^*) - \frac{\mu_L}{L_L}(f(x^k) - f^*)  \\
 &= \left(1- \frac{\mu_L}{L_L}\right)(f(x^k) - f^*)
\end{align*}
It is possible to obtain a faster rate using a smallest setting of the $L_i$ such that $f$ is 1-Lipschitz in the $L$-norm. However, it is not obvious how to find such $L_i$ in practice.

\section{General Equality Constraints}

Rather a constraint of the form $\sum_i x_i = \gamma$, we could also consider general equality constraints of the form $\sum_i a_ix_i = \gamma$ for positive weights $a_i$. In this case the greedy rule would be
\[
	\argmax_{i,j}\left\{\frac{a_j\nabla_i f(x) - a_i\nabla_j f(x)}{a_1 + a_2}\right\},
\]
and we could use a $\delta^k$ of the form
\[
	\delta^k = -\frac{\alpha}{a_1+a_2}[a_2\nabla_1 f(w^k) - a_1\nabla_2 f(w^k)].
\]
Unfortunately, the greedy rule in this case appears to requirer $O(n^2)$. However, if re-parameterized in terms of variables $x_i/a_i$ then the constraint is transformed to $\sum_i x_i=\gamma$ and we can use the methods discussed in this work (although the ratio approximation also relies on re-parameterization so makes less sense here).

We could also consider the case performing greedy coordinate descent methods with a set of linear equality constraints. With $m$ constraints, we expect this to require updating $m+1$ variables. Although it is straightforward to define greedy rules for this setting, it is not obvious that they could be implemented efficiently.

\section{Additional Experiments}
In Figure~\ref{fig:randomSeeds}, we repeat the scaled version of our equality-constrained experiment with different seeds. We see that the performance across different random seeds is similar.
\begin{figure}[t]
	\centering
	\includegraphics[width=0.49\textwidth]{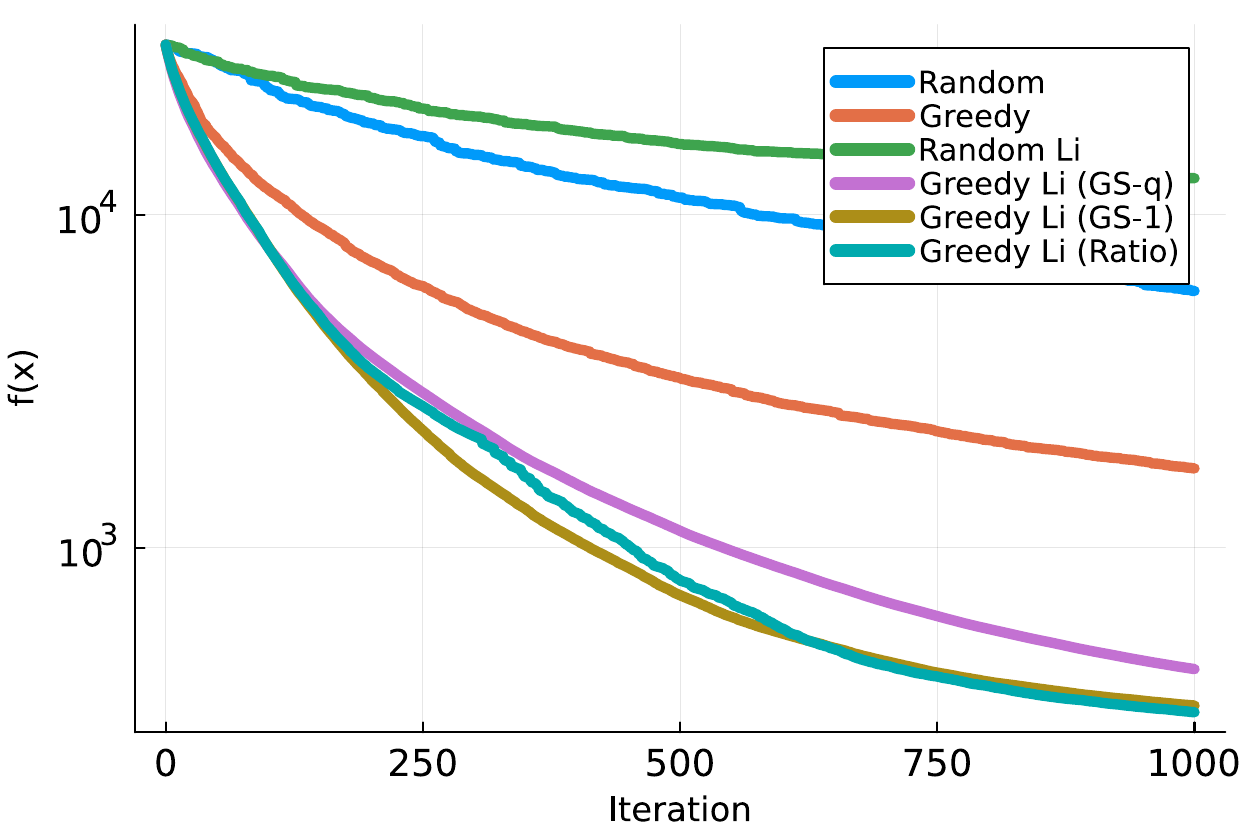}
	\includegraphics[width=0.49\textwidth]{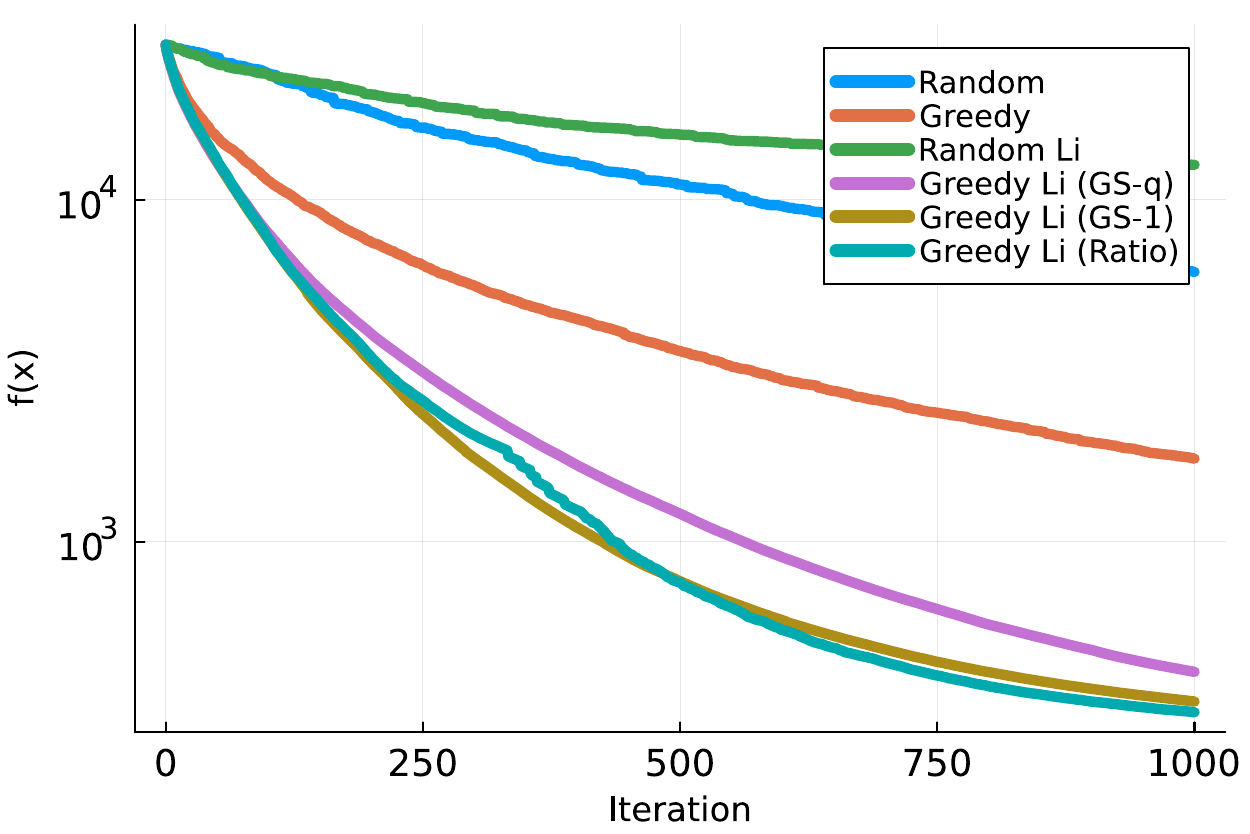}
	\includegraphics[width=0.49\textwidth]{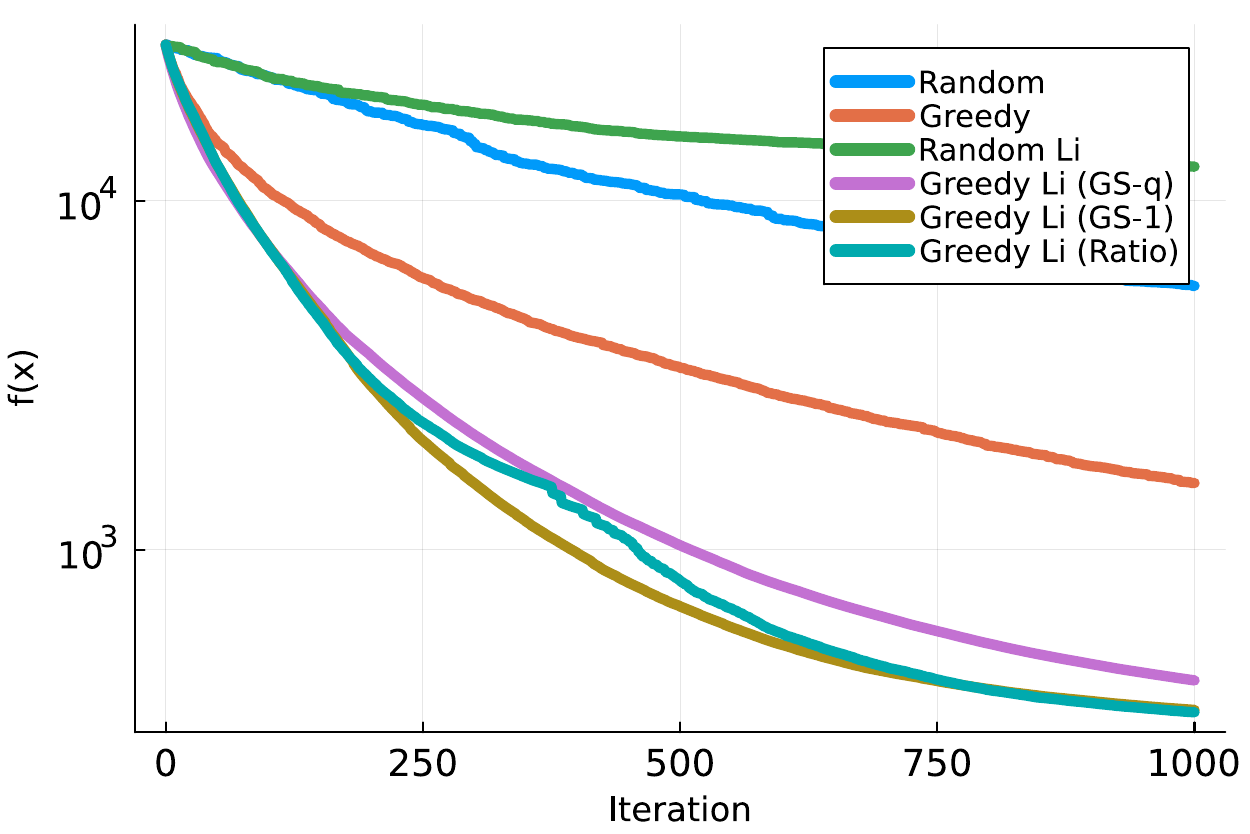}
	\includegraphics[width=0.49\textwidth]{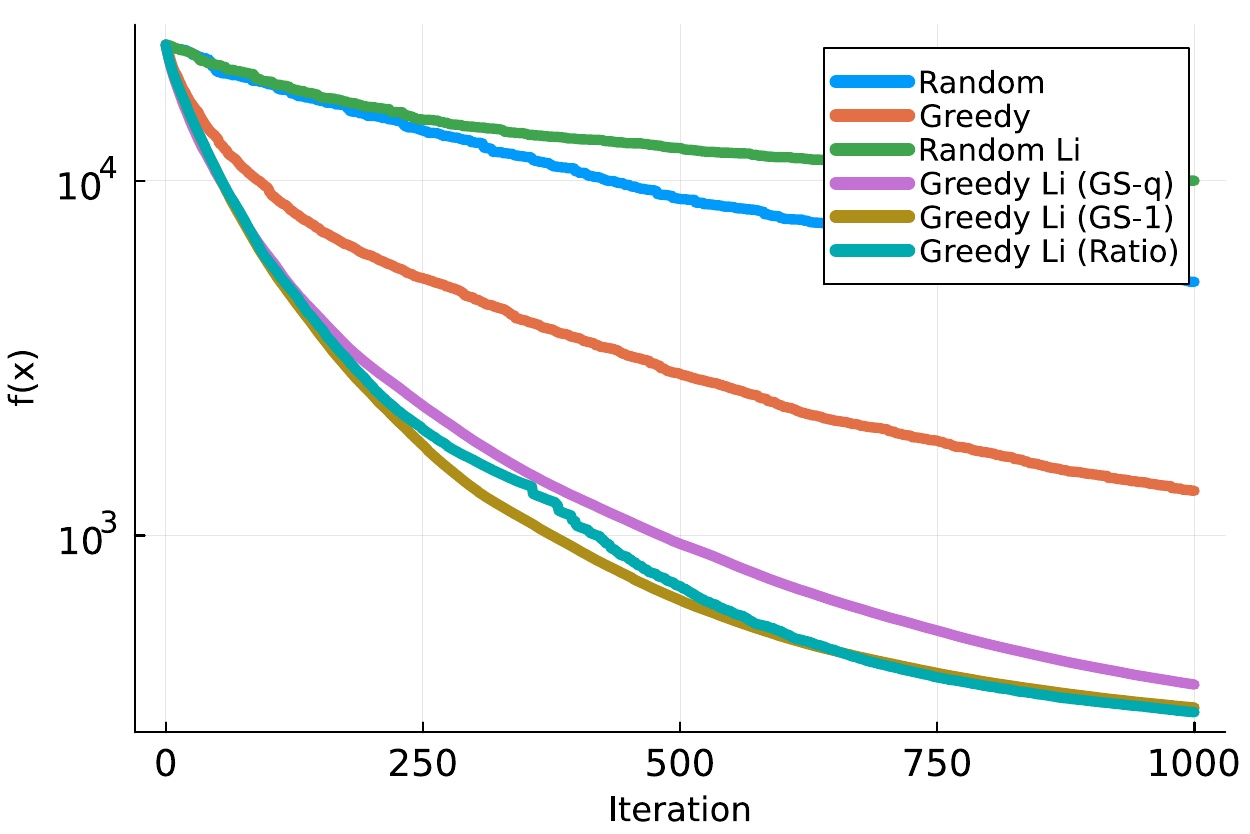}
	\caption{Comparison of different random and greedy rules under 4 choices for the random seed used to generate the data (and for the sampling in the randomized methods). }
	\label{fig:randomSeeds}
\end{figure}

We repeated the experiment that compares different greedy methods under equality and bound constraints with different seeds in Figures \ref{fig:boundconstdiffseed},~\ref{fig:intvardiffseed}, and~\ref{fig:numvardiffseed}. We see that the GS-q and GS-1 have a small but consistent advantage in terms of decreasing the objective while the GS-s and GS-1 rules have a consistent advantage in terms of moving variables to the boundaries. Finally, we see that the GS-1 rule only updates 2 variables on most iterations (over 85\%) while it updates 3 or fewer variables on all but a few iterations.
\begin{figure}
	\centering    \includegraphics[width=0.49\textwidth]{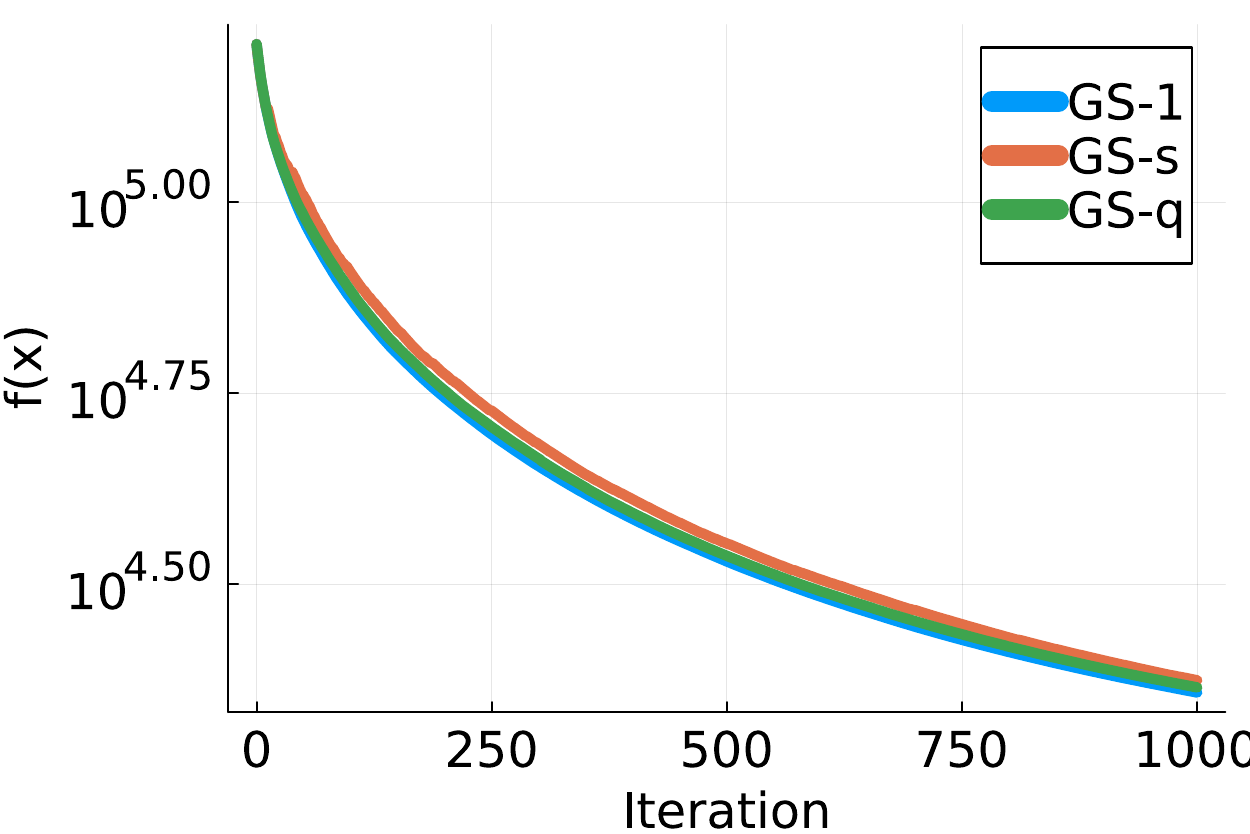}
	\includegraphics[width=0.49\textwidth]{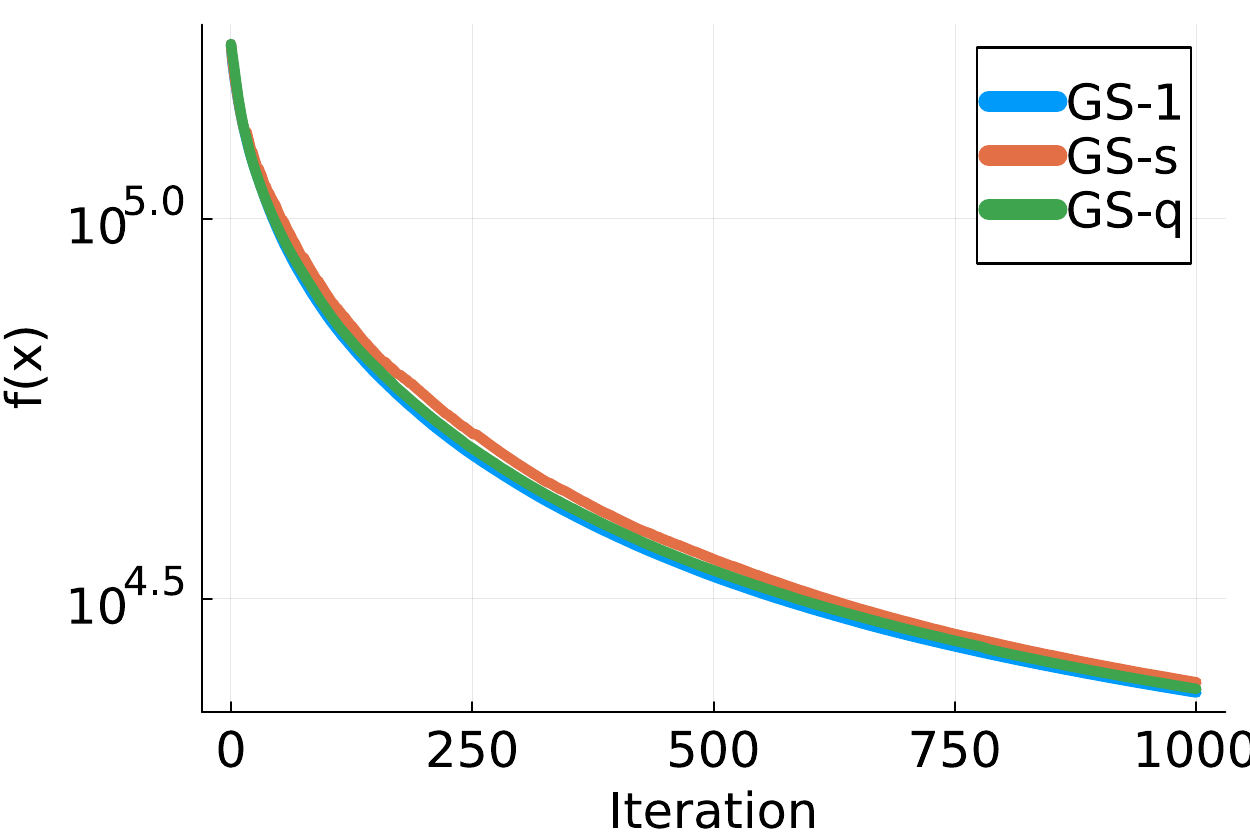}
	\includegraphics[width=0.49\textwidth]{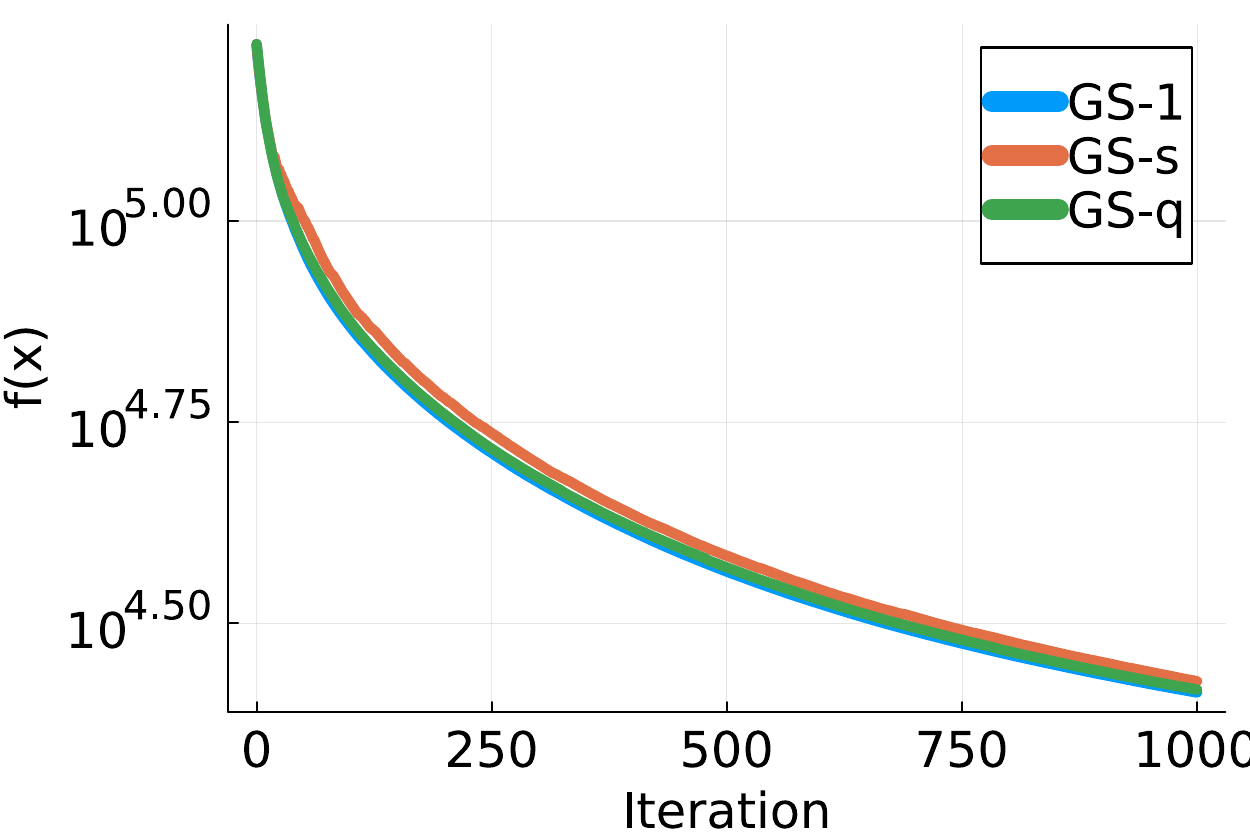}
	\includegraphics[width=0.49\textwidth]{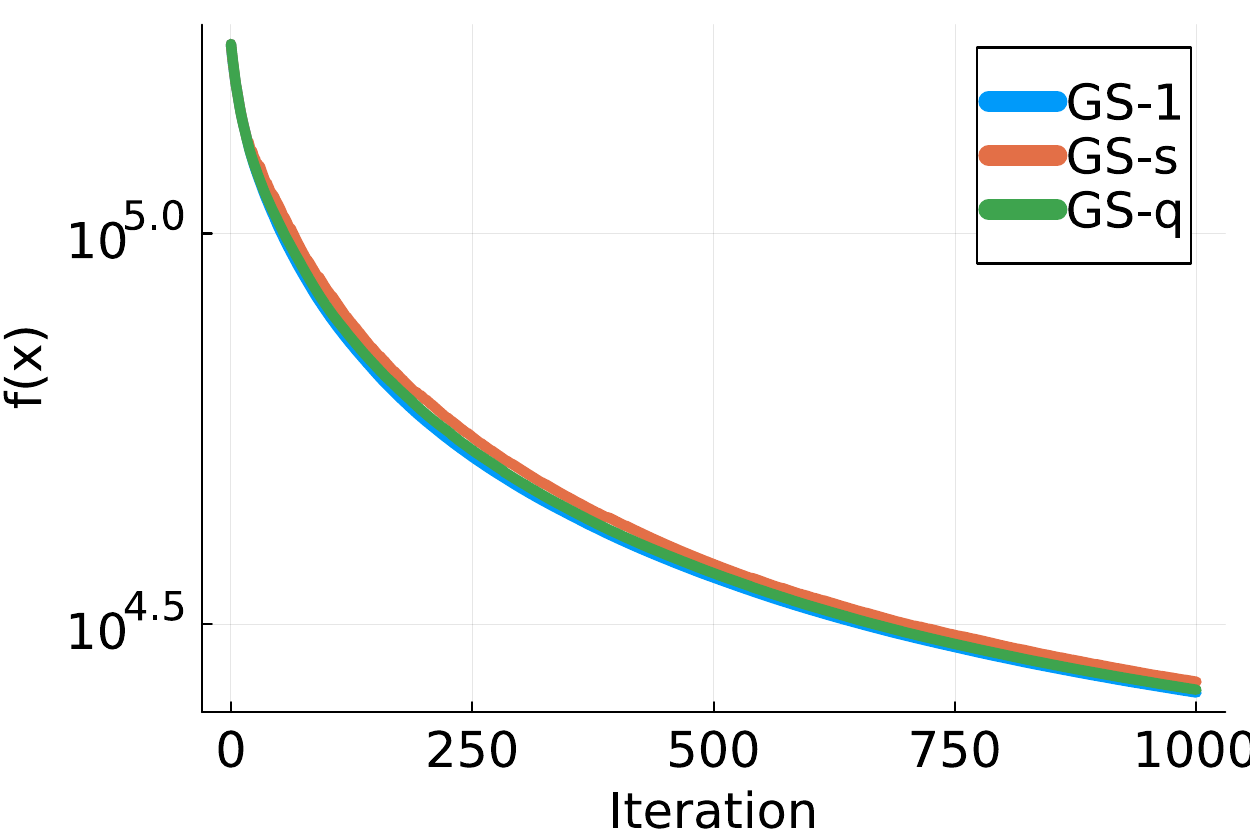}
	\caption{Comparison of different greedy rules under 4 choices for the random seed used to generate the data.}
	\label{fig:boundconstdiffseed}
\end{figure}
\begin{figure}
	\centering    \includegraphics[width=0.49\textwidth]{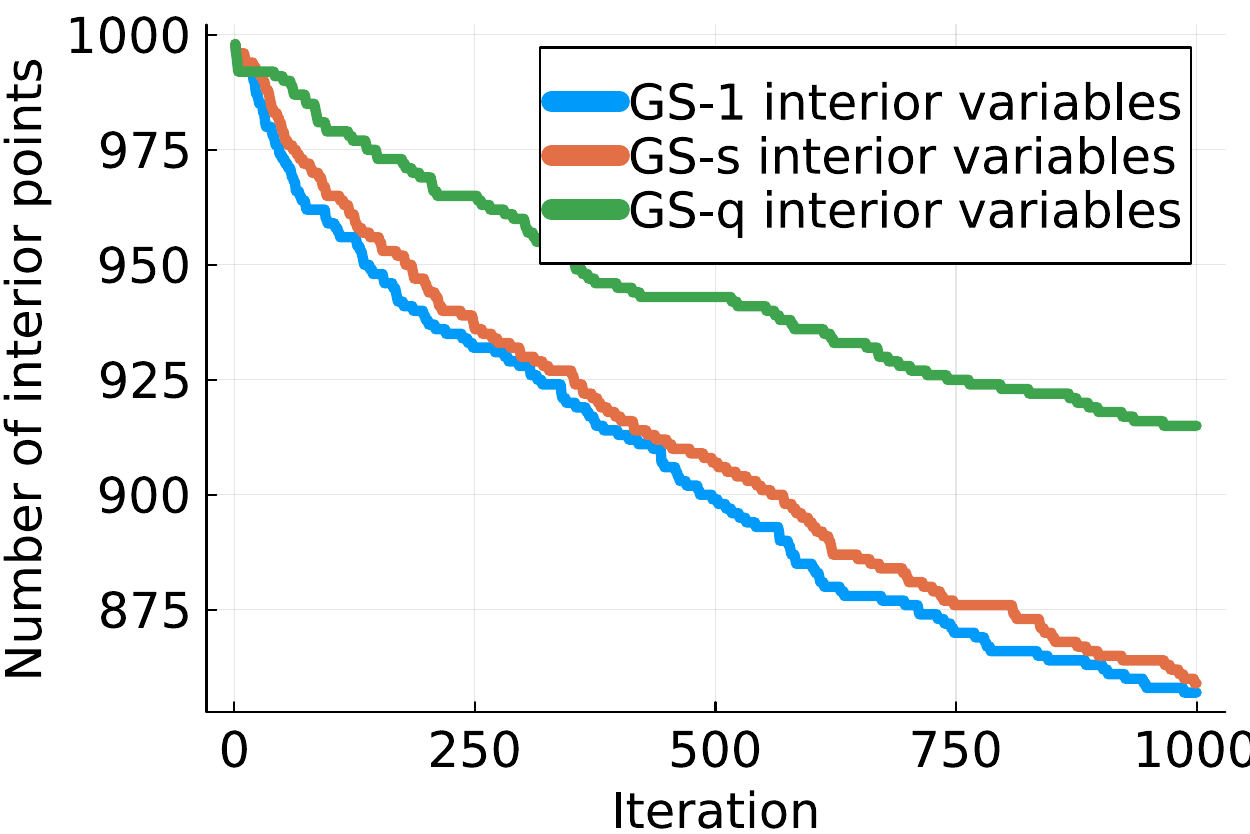}
	\includegraphics[width=0.49\textwidth]{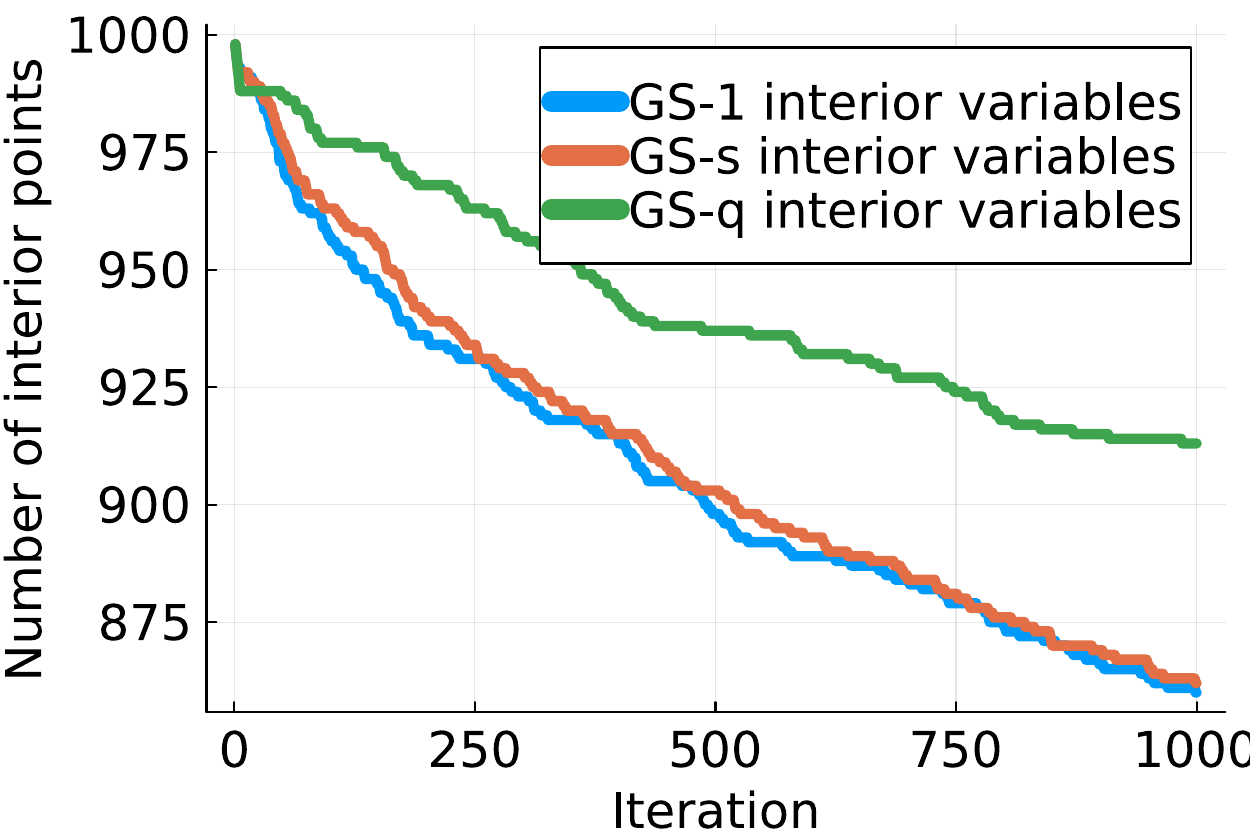}
	\includegraphics[width=0.49\textwidth]{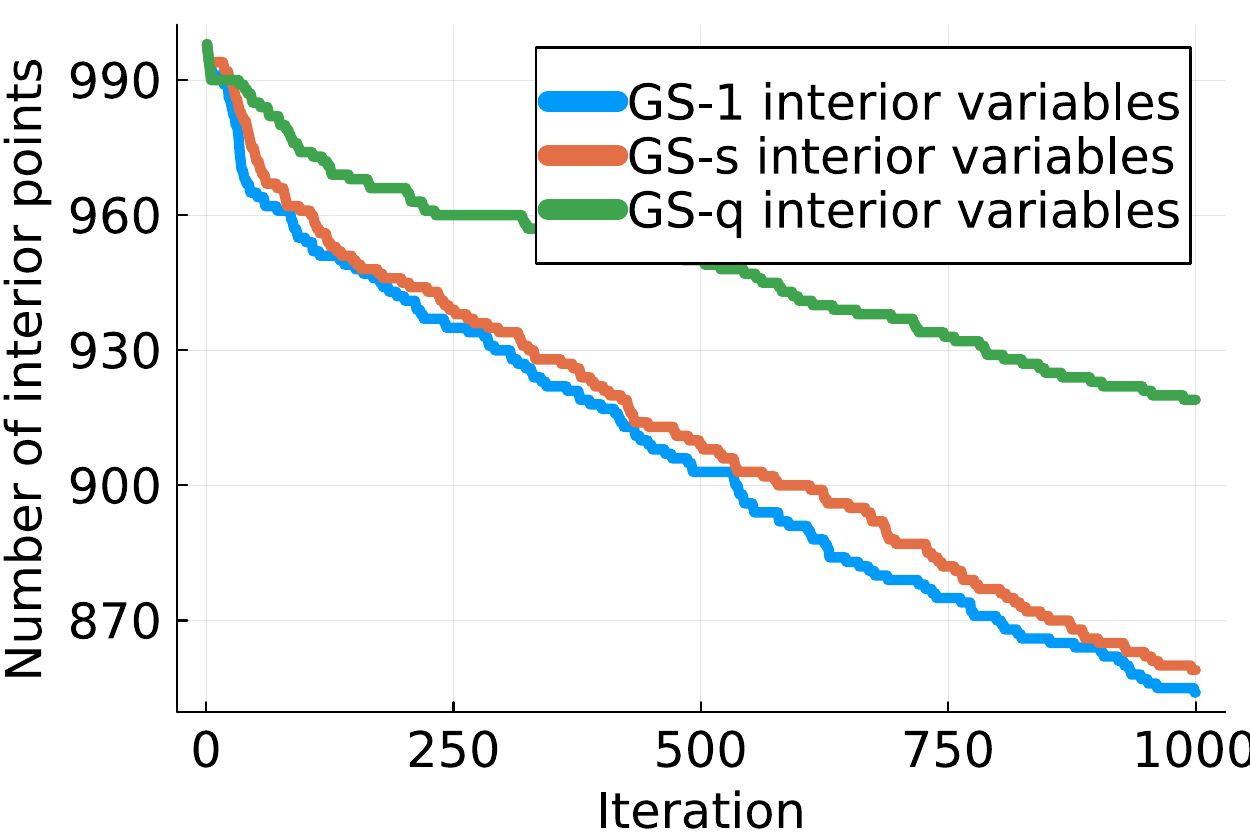}
	\includegraphics[width=0.49\textwidth]{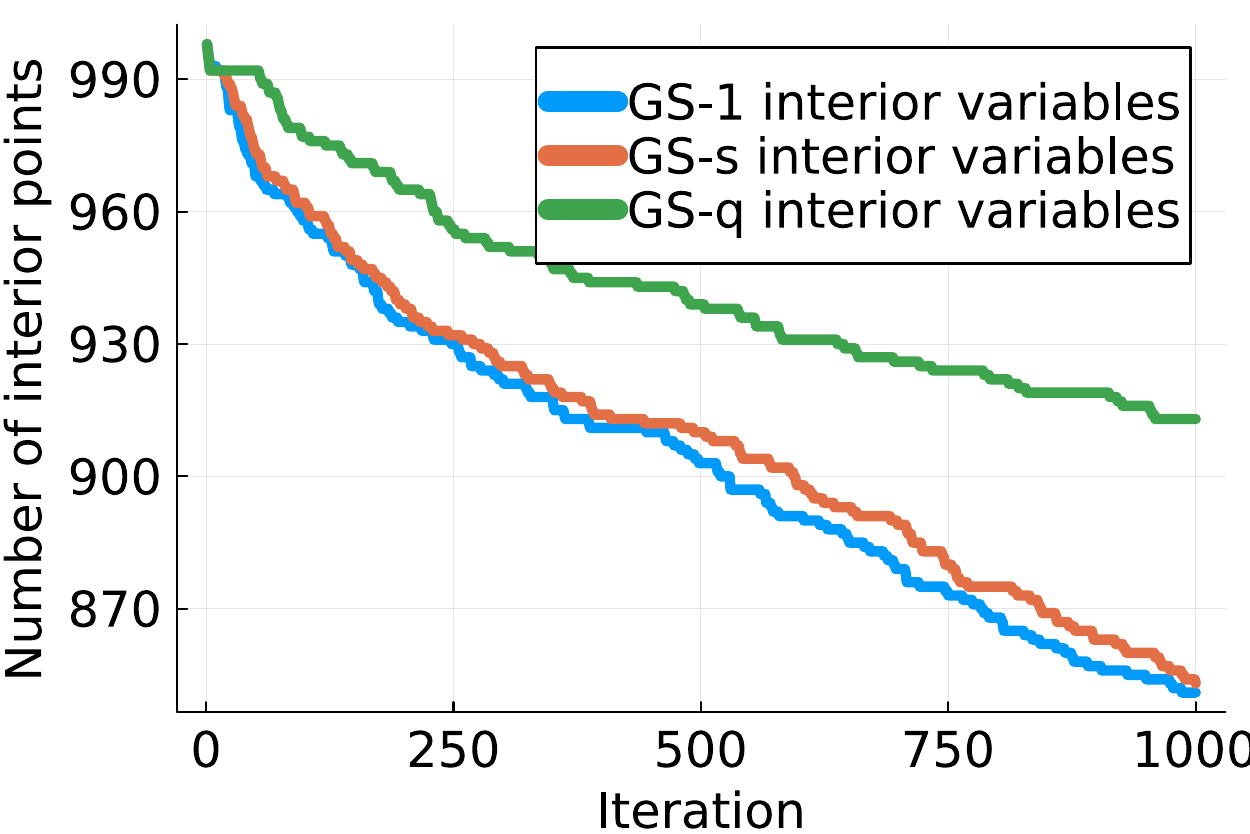}
	\caption{Comparison of number of interior variables updated by GS-1, GS-q and GS-s in every iteration for data generated by different random seed}
	\label{fig:intvardiffseed}
\end{figure}

\begin{figure}
	\centering    \includegraphics[width=0.49\textwidth]{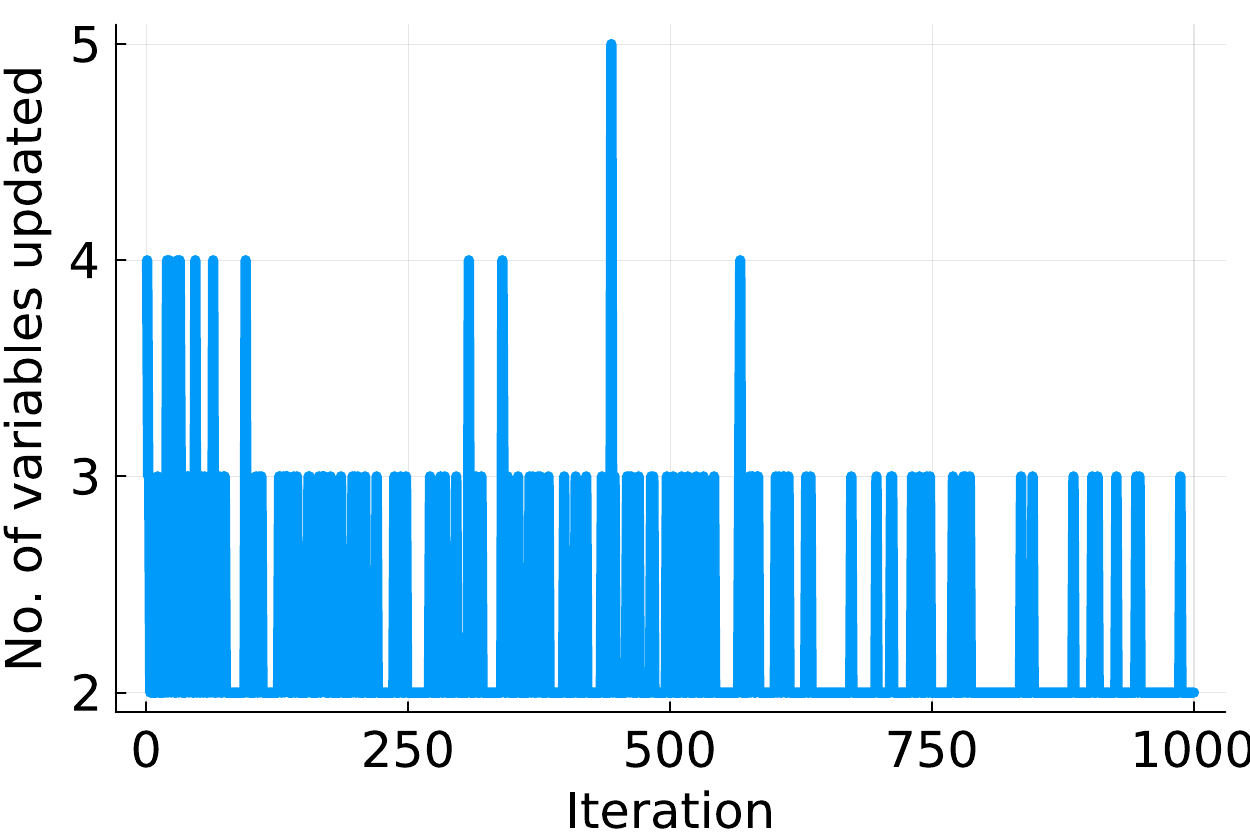}
	\includegraphics[width=0.49\textwidth]{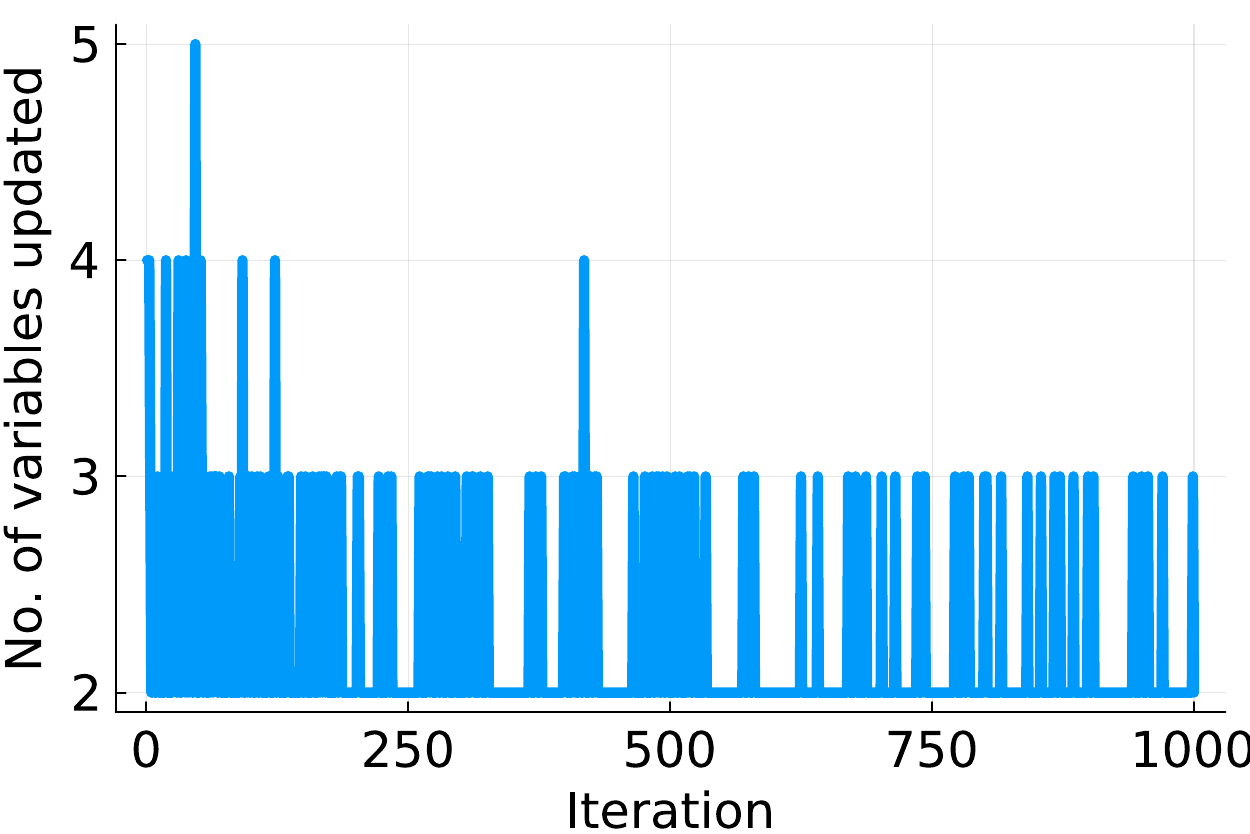}
	\includegraphics[width=0.49\textwidth]{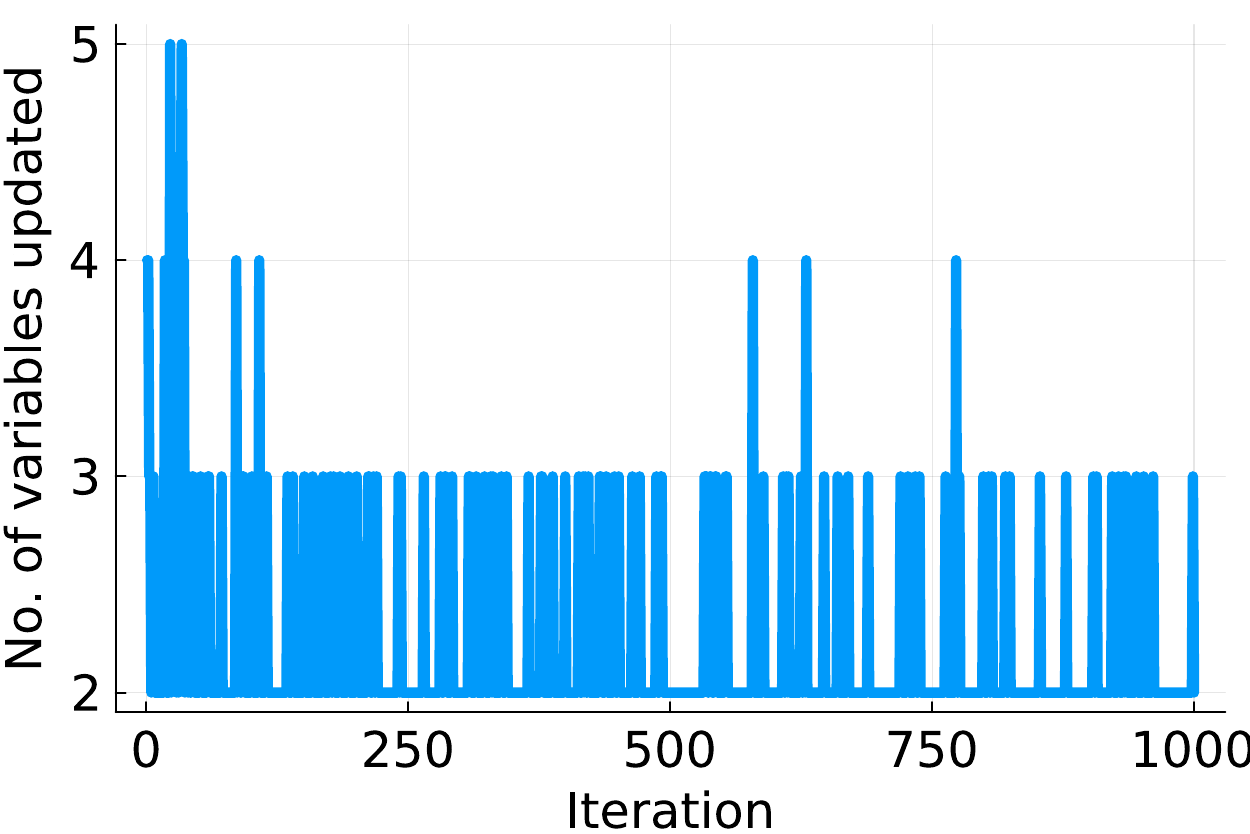}
	\includegraphics[width=0.49\textwidth]{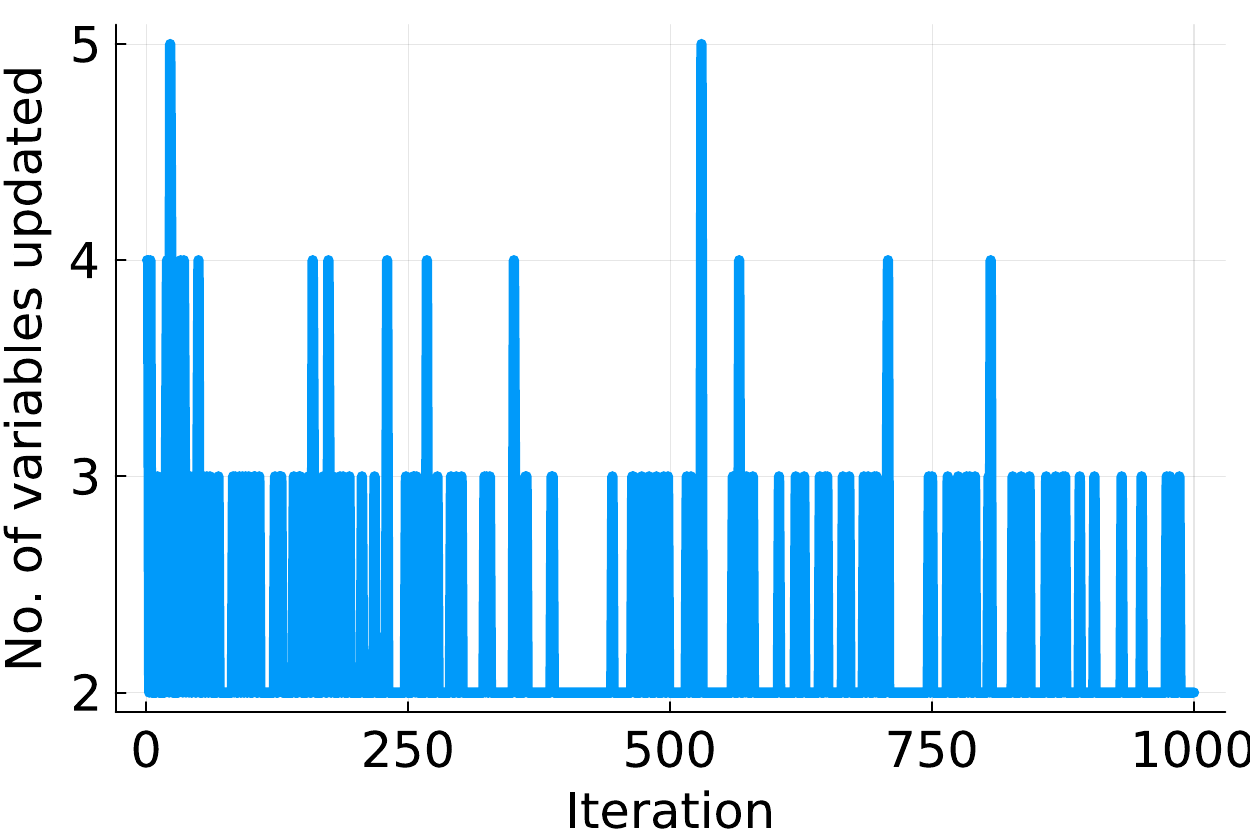}
	\caption{Number of variables updated by GS-1 with different random seed used to generate the data.}
	\label{fig:numvardiffseed}
\end{figure}


\end{document}